\documentclass[12pt]{article}

\usepackage[english]{babel}
\usepackage{graphicx}
\usepackage{latexsym}
\usepackage{amsfonts}
\usepackage{dsfont}
\usepackage{amscd}
\usepackage{amssymb}
\usepackage{amsmath}
\usepackage{hyperref}
\usepackage{mathrsfs}
\usepackage{amsthm}
\usepackage{tikz}
\usepackage{enumitem}

\allowdisplaybreaks

\setlist[enumerate,1]{label=\textup{(\roman*)}}

\setlist[enumerate]{itemsep=-1pt}
\setlist[itemize]{itemsep=-1pt}
\setlength{\textwidth}{16.3cm}
\setlength{\textheight}{23cm}
\setlength{\oddsidemargin}{0cm}
\setlength{\topmargin}{0cm}
\setlength{\footskip}{1cm}
\setlength{\headsep}{0cm}
\setlength{\headheight}{0cm}

\newlength{\bibitemsep}\setlength{\bibitemsep}{.2\baselineskip plus .05\baselineskip minus .05\baselineskip}
\newlength{\bibparskip}\setlength{\bibparskip}{0pt}
\let\oldthebibliography\thebibliography
\renewcommand\thebibliography[1]{%
	\oldthebibliography{#1}%
	\setlength{\parskip}{\bibitemsep}%
	\setlength{\itemsep}{\bibparskip}%
}

\title{C*-algebras from partial isometric representations of LCM semigroups}

\author{Charles Starling\thanks{Partially supported by an NSERC Discovery Grant and an internal Carleton University research grant. \texttt{cstar@math.carleton.ca}} \and Ilija Tolich\thanks{\texttt{itolich@maths.otago.ac.nz}}}
%
%
\usepackage{amssymb,url,hyperref,enumitem}
%
%

\newcommand{\NN}{\mathbb{N}}
\newcommand{\CC}{\mathbb{C}}
\newcommand{\ZZ}{\mathbb{Z}}
\newcommand{\Q}{\mathcal{Q}}
\newcommand{\piu}{\pi_{\text{u}}}
\newcommand{\pit}{\pi_{\text{t}}}
\newcommand{\Id}{\text{Id}}

\newcommand{\Irl}{\mathcal{I}_r^l}
\newcommand{\id}{\text{Id}}

\newcommand{\ctspp}{C^{*,r}_{\text{ts}}(P)}
\newcommand{\cpp}{C_{\text{ts}}^*(P)}
\newcommand{\qpp}{\mathcal{Q}_{\text{ts}}(P)}
\newcommand{\ogx}{\mathcal{O}_{(G,X)}}
\newcommand{\sgx}{\mathcal{S}_{(G,X)}}

\newcommand{\I}{\mathcal{I}}
\newcommand{\J}{\mathcal{J}}
\newcommand{\s}{\mathcal{S}}

\newcommand{\B}{\mathcal{B}}

\newcommand{\g}{\mathcal{G}}
\newcommand{\Ef}{\widehat E_0}
\newcommand{\Eu}{\widehat E_\infty}
\newcommand{\Et}{\widehat E_{\text{tight}}}
\newcommand{\Ct}{C^*_{\text{tight}}}

\newcommand{\gt}{\mathcal{G}_{\text{tight}}}
\newcommand{\fs}{\subseteq_{\text{fin}}}
\newcommand{\vp}{\varphi}
\newcommand{\pop}{P^{\text{op}}}
%
%
\theoremstyle{plain}
\newtheorem{theo}[subsection]{Theorem}

\newtheorem{lem}[subsection]{Lemma}

\newtheorem{prop}[subsection]{Proposition}

\theoremstyle{definition}
\newtheorem{defn}[subsection]{Definition}

\newtheorem{ex}[subsection]{Example}
\newtheorem{rmk}[subsection]{Remark}

\begin{document}
\maketitle
\begin{abstract}
	We give a new construction of a C*-algebra from a cancellative semigroup $P$ via partial isometric representations, generalising the construction from the second named author's thesis. We then study our construction in detail for the special case when $P$ is an LCM semigroup. In this case we realize our algebras as inverse semigroup algebras and groupoid algebras, and apply our construction to free semigroups and Zappa-Sz\'ep products associated to self-similar groups.
\end{abstract}

MSC Classes: 46L05, 20M18, 18B40

Keywords: LCM semigroup, inverse semigroup, tight representation, groupoid,
C*-algebra

\section{Introduction}
{\bf Background:} C*-algebras associated to semigroups are the subject of an active area of research in operator algebras. If $P$ is a left cancellative semigroup, its reduced C*-algebra is generated by the image of the left regular representation $\lambda: P\to \B(\ell^2(P))$ given by $\lambda_p(\delta_q) = \delta_{pq}$. In his study of Wiener-Hopf operators, Nica \cite{Ni92} defined a suitable universal C*-algebra for semigroups $P$ with a group embedding $P\subseteq G$ which induce a {\em quasi-lattice ordering on $G$}. Li generalized Nica's construction in \cite{Li12} to left-cancellative semigroups which do not necessarily embed in groups. Research on  these algebras and their natural quotients is fruitful and ongoing. In contrast with the group case, picking the left regular representation (rather than the right) affects the construction, and puts left and right multiplication on unequal footing; see the closing remark of \cite{CEL15} and \cite[Remark~7.5]{CaR16} for discussions on choosing the left over the right.

In the algebras above, $P$ is represented by isometries. This paper concerns representing semigroups in C*-algebras by {\em partial isometries}. A representation of a semigroup $P$ in a C*-algebra $A$ is a multiplicative map $\pi:P\to A$, and $\pi$ is called partial isometric if $\pi(p)$ is a partial isometry for all $p\in P$. Multiplicativity of $\pi$ implies that $\pi(p)$ will be a {\em power partial isometry}, i.e. $\pi(p)^n$ is a partial isometry for all $n$. A key example of a power partial isometry is the {\em truncated shift:}
\begin{equation}
J_n: \mathbb{C}^n \to \mathbb{C}^n, \hspace{1cm}
J_n(e_i)= \begin{cases}
e_{i+1} &i<n\\
0& i = n
\end{cases},\label{eq:truncated_shift}
\end{equation}
where $(e_i)_{i\leq n}$ is the standard basis for $\mathbb{C}^n$. Hancock and Raeburn \cite{HR90} considered the operator
\begin{equation}\label{eq:HRJ}
J = \bigoplus_{n=2}^\infty J_n: \bigoplus_{n=2}^\infty \mathbb{C}^n \to \bigoplus_{n=2}^\infty \mathbb{C}^n
\end{equation}
and showed that $C^*(J)$ is the universal C*-algebra generated by a power partial isometry. 

Said another way, $C^*(J)$ is the universal C*-algebra for partial isometric representations of the semigroup $\mathbb{N}$. The second named author's PhD thesis \cite{Tol17} sought to generalize Hancock and Raeburn's work to other semigroups; specifically those which induce  quasi-lattice orders. The pair $(\ZZ, \NN)$ is quasi-lattice ordered with respect to the usual ordering on $\NN$, and \eqref{eq:HRJ} is a direct sum over the {\em principal order ideals} $I_n = \{m\in \NN: m\leq n\}$ with each summand equal to $\ell^2(I_n)$. 

If $P$ is a subsemigroup of a group $G$ and $P\cap P^{-1} = \{1_G\}$, then $P$ induces two partial orders on $G$: $u \leq_l v \iff u^{-1}v\in P$ and $u \leq_r v \iff vu^{-1}\in P$. Note that $\leq_l$ is invariant under left multiplication while $\leq_r$ is invariant under right multiplication. Such semigroups are typically represented by left multiplication operators, so the focus is usually on $\leq_l$. The order $\leq_l$ (or $\leq_r$) is a {\em quasi-lattice order} if every finite set in $G$ which is bounded above has a least upper bound.

A key insight of \cite[1.3.2]{Tol17} is that for partially ordered groups $(G,P)$ which are not necessarily commutative, the map analogous to \eqref{eq:truncated_shift}
\begin{equation}
J^a: P \to \mathcal{B}(\ell^2(I_a)), \hspace{1cm}
J^a_p (\delta_q) = \begin{cases}
\delta_{pq} & pq\in I_a\\
0 & \text{otherwise}
\end{cases},\label{eq:Jpa}
\end{equation}
will not be a representation unless $I_a$ is taken to be an order ideal in the {\em right-invariant} partial order (this distinction is wiped out in commutative cases like $\NN$). So to generalize Hancock and Raeburn's work, \cite{Tol17} starts with $(G,P)$ which is {\em doubly quasi-lattice ordered} (i.e., both $\leq_l$ and $\leq_r$ are quasi-lattice orders) and defines a C*-algebra $C^*_{\text{ts}}(G, P, P^{op})$ generated by direct sums of the operators \eqref{eq:Jpa}, and also defines a suitable universal algebra $C^*(G, P, P^{op})$. It is also shown that the two coincide when $G$ is amenable.

\noindent{\bf Motivation:} Here we show that one can generalize the construction above to general cancellative semigroups $P$. Our motivation is twofold:
\begin{enumerate}
	\item  increase the scope of the construction to include a larger class of semigroups, and
	\item to construct a C*-algebra from semigroup which puts the left and right multiplication structure on equal footing.
\end{enumerate}      
For the first point, we  note that the relations above can be presented on $P$ without mentioning $G$ or the inverse:
\begin{equation}\label{eq:lrorder}
p \leq_l q \iff qP\subseteq pP, \hspace{1cm} p \leq_r q \iff Pq\subseteq Pp,
\end{equation}
and so we can make the same definition \eqref{eq:Jpa} in cases where $P$ does not embed into a group (but note that these relations may no longer be reflexive).

In the usual isometric construction, one particular generalization of Nica's quasi-lattice ordered groups has has received a lot of attention: the {\em right LCM semigroups}.
These are semigroups for which the intersection of any two principal left ideals is either empty or another principal left ideal, and their C*-algebras have been considered by many authors, see \cite{ABLS19, BOS18, BLS16, BLS18, BS16, LL20, LL21, LiB19, NS19, Stam15, Stam17, StLCM}.
Their study is aided by the observation of Norling \cite{No14} that if $P$ is a right LCM semigroup, then $C^*(P)$ can be realized as the universal C*-algebra for a certain enveloping inverse semigroup $I_l(P) \supseteq P$.

Because our construction is incorporating the right multiplication as well, we consider semigroups which satisfy the LCM property for both right ideals and left ideals---we call these {\em LCM semigroups}. Many right LCM semigroups studied in the literature (free semigroups, Baumslag-Solitar monoids, Zappa-Sz\'ep products associated to self-similar groups) happen to also be left LCM. While our construction makes sense for an arbitrary cancellative monoid, all our results are for the LCM case.

For the second point, as we note above, choosing the left regular representation over the right can give different C*-algebras, i.e. $C^*(P)$ is not always isomorphic to $C^*(\pop)$. One of our motivations then is to produce a C*-algebra from a cancellative semigroup which equally expresses the right and left multiplication structure of $P$.

\noindent {\bf Outline}: After giving the general definition of our C*-algebras (which we call $\cpp$ and $\ctspp$), we restrict our attention LCM monoids,  Definition~\ref{def:LCM}.
In this case, we show that one obtains isomorphic algebras from $P$ and $\pop$, Proposition~\ref{prop:iso_opp}. We also crucially show our algebras are generated by an inverse semigroup $\s_P$ containing $P$---this realization is the main source of our results.
It turns out that $\s_P$ is always $E^*$-unitary (Lemma~\ref{lem:E*unitary}).
We show that $\cpp$ is isomorphic to the universal C*-algebra of $\s_P$ (Theorem~\ref{thm:fulliso}) and that $\ctspp$ is isomorphic to the reduced C*-algebra of $\s_P$ (Theorem~\ref{thm:reducediso}).
We then, by definition, take $\qpp$ to be Exel's tight C*-algebra of $\s_P$ (as defined in \cite{Ex08}). Realization of these algebras as inverse semigroup algebras also gives them \'etale groupoid models.

We close the paper by considering some natural examples in Section~\ref{sec:examples}.
The first is that of free monoids. 
When one applies Li's construction to free monoids (and considers their natural boundary quotient) one obtains the Cuntz algebras $\mathcal{O}_n$. 
Our construction yields a very different algebra---the crossed product associated to the full shift (Theorem~\ref{thm:crossedproductfullshift}). 
Our other main example is that of self-similar actions.
We show that our construction results in the same boundary quotient as Li's (Theorem~\ref{thm:SSGboundaryiso}) because in this case tight representations do not see the left ideal structure at all.

\section{Partial isometric representations of semigroups}

\subsection{Preliminaries and notation}
We will use the following general notation. If $X$ is a set and $U\subseteq X$, let Id$_U$ denote the map from $U$ to $U$ which fixes every point, and let $1_{U}$ denote the characteristic function on $U$, i.e. $1_U: X\to \CC$ defined by $1_U(x) = 1$ if $x\in U$ and $1_U(x) = 0$ if $x\notin U$. If $F$ is a finite subset of $X$, we write $F\fs X$.

\subsection{Semigroups and the universal algebra $\cpp$}
A semigroup $P$ is {\em left cancellative} if $pq = pr \implies q = r$ for $p, q, r\in P$, it is {\em right cancellative} if $qp = rp \implies q = r$ for $p, q, r\in P$, and
it is {\em cancellative} if it is both left and right cancellative.
A {\em monoid} is a semigroup with an identity element. 
If $P$ is a monoid, we let $U(P)$ denote the set of invertible elements of $P$.
For $p\in P$, the set $pP = \{pq :  q\in P\}$ is a right ideal, and right ideals of this form are called a {\em principal} right ideals. Similarly, $Pp = \{qp :  q\in P\}$ is a left ideal, and left ideals of this form are called a principal left~ideals. 

As mentioned in the introduction, a map $\pi:P \to A$ from a semigroup $P$ to a C*-algebra $A$ is called a {\em representation} of $P$ if it is multiplicative, it is called {\em (partial) isometric} if $\pi(p)$ is a (partial) isometry for each $p\in P$.

An {\em inverse semigroup} is a semigroup $S$ such that for each $s\in S$ there exists a unique element $s^*$ such that $ss^*s = s$ and $s^*ss^* = s^*$. 
For such a semigroup we let $E(S) = \{e\in S: e^2= e\}$ and call this the set of {\em idempotents}. 
A {\em zero} in $S$ is an element 0 such that $0s = s0 = 0$ for all $s\in S$. 
An inverse semigroup with such a (necessarily unique) element is called an {\em inverse semigroup with zero}, and if $S$ is such we write $S^\times: = S\setminus\{0\}$.
We say that $S$ is {\em E*-unitary} if $s\in S$, $e\in E(S)^\times$ and $se = e$ implies $s\in E(S)$.

The product in an inverse semigroup induces a natural partial order $\leqslant$ on $S$, by saying $s\leqslant t$ if and only if there exists $e\in E(S)$ such that $se =t$.
With this ordering, $E(S)$ is a (meet-) semilattice with meet $e\wedge f = ef$. 

For a set $X$, the {\em symmetric inverse monoid on $X$} is $$\I(X) := \{s:U\to V: U, V\subseteq X, f\text{ is a bijection}\}$$ and is an inverse semigroup when given the operation of composition on the largest possible domain, and when $s^* = s^{-1}$.
Since $st$ must be an element of $S$ for all $s,t\in S$ and it could be that the range of $t$ does not intersect the domain of $s$, $\I(X)$ contains the empty function which we denote 0.
It satisfies $0f = f0 = 0$ for all $f\in \I(X)$, so that $\I(X)$ is an inverse semigroup with zero. 
Here $f\leqslant g$ if and only if $g$ extends $f$ as a function.

For $s\in \I(X)$, let $D_s\subseteq X$ denote its domain. Then it is easy to see that $D_s = D_{s^*s}$ and that $s$ is a bijection from $D_{s^*s}$ to $D_{ss^*}$. Every $s\in \I(X)$ determines a partial isometry $V_s$ in $\B(\ell^2(X))$ with initial space spanned by $\{\delta_x:x\in D_{s^*s}\}$ and final space spanned by $\{\delta_x:x\in D_{ss^*}\}$ determined by $V_s(\delta_x) = \delta_{s(x)}$. It is straightforward to check that the map 
\begin{equation}
\label{eq:Vdef}V:\I(X)\to\B(\ell^2(X)),\hspace{1cm} s\mapsto V_s,
\end{equation}
is an injective inverse monoid homomorphism (i.e. $V_{st} = V_sV_t$ and $V_{s^*} = V_s^*$ for all $s,t\in \I(X)$). 

The assignment $Y\mapsto \Id_Y$ is a monoid isomorphism from $\mathcal{P}(X)$ (the power set of $X$, which is a monoid when given set intersection) to $E(\I(X))$. For $s\in \I(X)$ and $Y\subseteq X$, the elements $s\Id_Ys^*$ and $s^*\Id_Ys$ are both idempotents, and hence the identity functions on subsets of $X$. It is straighforward to check that if we set
\begin{equation}\label{eq:Y|s}
\left.Y\right|_s :=s^*(Y)\cap D_{ss^*} , 
\end{equation}
then we have $s\Id_Ys^* = \Id_{\left.Y\right|_{s^*}}$ and $s^*\Id_Ys = \Id_{\left.Y\right|_s}$. For $Y\subseteq X$ let $e_Y\in \B(\ell^2(X))$ be the orthogonal projection onto $\ell^2(Y)$. Then because $V$ is a homomorphism we have
\begin{equation}\label{eq:VeV}
V_se_YV_s^* = e_{\left.Y\right|_{s^*}}, \hspace{1cm}V_s^*e_YV_s = e_{\left.Y\right|_{s}}.
\end{equation}

Let $P$ be a left cancellative semigroup. 
For $a\in P$ we consider the elements of $P$ which are $\leq_r$ below $a$ (see \eqref{eq:lrorder}):
\[
I_a := \{x\in P: Pa\subseteq Px\} = \{x\in P: x\leq_r a\}.
\]
Note that $xy\in I_a$ implies $y\in I_a$ (because then $Pa\subseteq Pxy\subseteq Py)$.
Left cancellativity implies that left multiplication by $p$ induces a bijection from $\{x:px\in I_a\}$ to $\{px:px\in I_a\}$. Thus if we define
\[
J^a_p\delta_x = \begin{cases}
\delta_{px} &\text{if }px\in I_a\\
0&\text{otherwise}
\end{cases},
\]
we have that $J_p^a$ is a partial isometry in $\B(\ell^2(I_a))$.
\begin{lem}\label{lem:Ja_partial_isometric_reps}
	Let $P$ be a left cancellative semigroup. Then for each $a\in P$, the map $p\mapsto J^a_p$ is a partial isometric representation of $P$.	
\end{lem}
\begin{proof}
	Suppose that we have $p,q,a,x\in P$ with $x\in I_a$. If $pqx\in I_a$ then as we noted above we also have $qx\in I_a$, and thus $J^a_pJ^a_q\delta_x = J^a_p\delta{qx} = \delta_{pqx} = J^a_{pq}\delta_x$. If $pqx\notin I_a$, then both $J^a_pJ^a_q\delta_x$ and $J^a_{pq}\delta_x$ are zero. In both cases, $J_pJ_q = J^a_{pq}$. 
\end{proof}
We now consider the direct sum of these representations
\[
J: P\to \B\left(\bigoplus_{a\in P}\ell^2(I_a)\right), \hspace{1cm}J_p:= \bigoplus_{a\in P} J_p^a.
\]
Let 
\begin{equation}\label{eq:generalDelta}\Delta = \{(a, x)\in P\times P :  x\in I_a\}.
\end{equation}
We naturally identify $\bigoplus_{a\in P}\ell^2(I_a)$ with $\ell^2(\Delta)$ via $\ell^2(I_a) \ni \delta_{x}^a \mapsto \delta_{(a,x)}\in \ell^2(\Delta)$.
We will then write the standard orthonormal basis of $\ell^2(\Delta)$ as $\{\delta_{x}^{a}: x\in I_a\}$, and using this identification we have
\begin{equation}\label{eq:Jpdef}
J_p(\delta_x^{a}) = \begin{cases}
\delta_{px}^{a} & \text{if }px\in I_a
\\0&\text{otherwise}
\end{cases}, \hspace{1cm}
J_p^*(\delta_x^{a}) = \begin{cases}
\delta_{p_1}^{a} & \text{if }x= pp_1\\0&\text{otherwise}
\end{cases}.
\end{equation}
\begin{defn}\label{def:CtsDef}
	Let $P$ be a left cancellative semigroup and let $J$ be as above. 
	We let $\ctspp$ denote the C*-algebra generated by the set $\{J_p: p\in P\}\subseteq \B\left(\ell^2(\Delta)\right)$, and call this the {\em reduced truncated shift C*-algebra of $P$}.
\end{defn}

Similar to \cite[Definition~2.15]{Tol17} both the name and the subscript ``ts'' are meant to indicate that it is generated by generalized truncated shift operators, as described in \cite[Lemma~2.12]{Tol17}.

To make Definition~\ref{def:CtsDef} we have only needed to assume left cancellativity of $P$. Our motivation (discussed in the introduction) requires $P$ to be right cancellative as well. In addition, the following description of the universal algebra is clearest when $P$ is assumed to have an identity. Therefore, {\bf for the rest of the paper we will consider only the case where $P$ is a cancellative monoid}.

Fix a cancellative monoid $P$ now, with identity $1$. Take $\Delta$ as in \eqref{eq:generalDelta} and note that in this case we have 
\begin{equation*}\label{eq:DeltaDef}
\Delta = \{(bx, x)\in P\times P :  b,x\in P\}.
\end{equation*}
Then taking $X := \Delta$ in \eqref{eq:Vdef}, we have that $J_p$ is in the image of $V$ for all $p\in P$, so that we can define $v_p:= V^{-1}(J_p)$. Note that because $J_{pq} = J_pJ_q$ for all $p,q\in P$, we have
\begin{equation}\label{eq:vpq=vpvq}
v_{pq} = V^{-1}(J_{pq}) = V^{-1}(J_pJ_q) = V^{-1}(J_p)V^{-1}(J_q) = v_pv_q.
\end{equation}
From \eqref{eq:Jpdef} we see that
\begin{equation}
\label{eq:Dvp}D_{v_p^*v_p} = \{(bpx, x): b,x\in P\},\hspace{1cm} D_{v_pv_p^*} = \{(bpx, px): b,x\in P\},
\end{equation}  
and $v_p(bpx,x) = (bpx, px)$. 
\begin{lem}
	Let $Y\subseteq \Delta$ and set $Y_p:= \left.Y\right|_{v_p}$ and $Y^p:= \left.Y\right|_{v_p^*}$ (see \eqref{eq:Y|s}). Then 
	\begin{equation}\label{eq:Ypp}
	Y_p = \{(bpx, px) :  (bpx, x)\in Y\},\hspace{1cm} Y^p = \{(bpx, x) :  (bpx, px)\in Y \}.
	\end{equation}
\end{lem}
\begin{proof}
	We prove the statement for $Y_p$, $Y^p$ is similar. For $b,x\in P$, we have $\gamma := (bpx,px)$ and $(bpx, x)\in Y$ if and only if $\gamma = v_p^*(bpx,x)\in v_p^*(Y)$ and $\gamma\in D_{v_pv_p^*}$ by \eqref{eq:Dvp}. 
\end{proof}
\begin{defn}\label{def:constructible}
	Let $P$ be a cancellative monoid. 
	Then the {\em set of constructible subsets} of $\Delta$, denoted $\J(P)$,  is the smallest collection of subsets of $\Delta$ which is closed under finite intersections,
	contains $Y_p$ and $Y^p$ whenever $Y\in \J(P)$ and $p\in P$, and
	contains $\Delta$ and $\emptyset$.
\end{defn}
Using the notation \eqref{eq:Ypp}, for each $p\in P$ we have
\begin{equation}\label{eq:littlevpdef}
v_p: \Delta^p\to \Delta_p, \hspace{1cm}
v_p(bpx,x) = (bpx,px),
\end{equation}
and putting \eqref{eq:VeV} and \eqref{eq:Ypp} together we have
\begin{align}
v_p\Id_Yv_p^* &= \Id_{Y_p},&v_p^*\Id_Yv_p &= \Id_{Y^p},&\text{for all }Y\subseteq \Delta, p\in P,\label{eq:vIdv}\\
J_pe_YJ_p^* &= e_{Y_p},&J_p^*e_YJ_p &= e_{Y^p},&\text{for all }Y\subseteq \Delta, p\in P,\label{eq:JeJ}
\end{align}
and so $e_Y\in \ctspp$ for all $Y\in \J(P)$. 

Our goal is to define a universal algebra for partial isometric representations of $P$, but in analogy with \cite{Li12} we want our definition to include a set of projections isomorphic to $\J(P)$ satisfying the relations \eqref{eq:JeJ}. Before giving our definition, we will give another description of $\J(P)$ as the idempotent semilattice of the inverse monoid generated by the $v_p$. 
%
%
%
Define
\begin{equation}\label{eq:s}
\Irl(P) = (\text{the inverse semigroup generated by $\{v_p :  p\in P\}$ inside $\I(\Delta)$})\cup\{0\}.
\end{equation}
\begin{lem}
	$E(\Irl(P)) = \{\id_Y : Y\in \J(P)\}$. Hence, $E(\Irl(P))$ and $\J(P)$ are isomorphic as semilattices.
\end{lem}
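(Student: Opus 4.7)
The plan is to prove both set inclusions, after which the semilattice isomorphism will follow immediately.

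For $\{\id_Y : Y\in \J(P)\}\subseteq E(\Irl(P))$, I would argue directly from the inductive definition of $\J(P)$. Consider the collection $\mathcal{Y} = \{Y\subseteq\Delta : \id_Y\in\Irl(P)\}$. It contains $\Delta$ and $\emptyset$ (the latter because $0\in\Irl(P)$), is closed under intersection since $\id_Y\id_Z = \id_{Y\cap Z}$, and is closed under the operations $Y\mapsto Y_p$ and $Y\mapsto Y^p$ by Lemma~\ref{lem:isg-relations}(3)--(4). By minimality of $\J(P)$, we get $\J(P)\subseteq \mathcal{Y}$, and each $\id_Y$ is clearly idempotent in $\I(\Delta)$.

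For the reverse inclusion, I would use the fact that the idempotents of any inverse semigroup are exactly the elements of the form $ss^*$. Every $s\in\Irl(P)$ can be written as $s = v_{p_1}^{\epsilon_1}\cdots v_{p_n}^{\epsilon_n}$ with $\epsilon_i\in\{1,*\}$, and I claim by induction on $n$ that $ss^* = \id_Y$ for some $Y\in\J(P)$. The base case $n=1$ gives either $v_pv_p^* = \id_{\Delta_p}$ or $v_p^*v_p = \id_{\Delta^p}$ by Lemma~\ref{lem:isg-relations}(2), and both $\Delta_p$ and $\Delta^p$ lie in $\J(P)$. For the inductive step, writing $s = v_{p_1}^{\epsilon_1} s'$, we have
\[
ss^* = v_{p_1}^{\epsilon_1}\,s'(s')^*\,\bigl(v_{p_1}^{\epsilon_1}\bigr)^*.
\]
By induction $s'(s')^* = \id_Y$ for some $Y\in\J(P)$, so Lemma~\ref{lem:isg-relations}(3)--(4) yields $ss^* = \id_{Y_{p_1}}$ or $\id_{Y^{p_1}}$, and both sets lie in $\J(P)$ by the closure properties in Definition~\ref{def:constructible}.

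Finally, the semilattice isomorphism is immediate: the map $Y\mapsto\id_Y$ from $\J(P)$ to $E(\Irl(P))$ is a bijection (distinct subsets of $\Delta$ yield distinct partial identity maps in $\I(\Delta)$), and it sends intersection to composition because $\id_Y\id_Z = \id_{Y\cap Z}$. I do not expect any serious obstacle; the key observation is that $ss^*$ unwinds as a nested conjugation of a single generator-level idempotent, and Lemma~\ref{lem:isg-relations} already records that such conjugations stay within $\{\id_Y : Y\in\J(P)\}$.
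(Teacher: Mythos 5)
Your proposal is correct and follows essentially the same strategy as the paper: one inclusion via the minimality of $\J(P)$ applied to $\{Y\subseteq\Delta:\id_Y\in \Irl(P)\}$, and the other by writing an idempotent as $ss^*$ and unwinding it as a nested conjugation of partial identities using Lemma~\ref{lem:isg-relations}. The only cosmetic difference is that you make the induction on word length explicit where the paper performs the telescoping computation directly on the normal form $v_{p_1}v_{q_1}^*\cdots v_{p_n}v_{q_n}^*$.
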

\begin{proof}
	By \eqref{eq:vpq=vpvq}, we can write a general nonzero element $s\in \Irl(P)$ in the form
	\[
	s = v_{p_1}v_{q_1}^*v_{p_1}v_{q_1}^*\cdots v_{p_n}v_{q_n}^*
	\]
	for some $p_1,\dots p_n, q_1, \dots q_n \in P$. So we calculate
	\begin{align*}
	ss^* &= v_{p_1}v_{q_1}^*v_{p_1}v_{q_1}^*\cdots v_{p_n}v_{q_n}^*v_{q_n}v_{p_n}^*\cdots v_{q_1}v_{p_1}^*\\
	&= v_{p_1}v_{q_1}^*v_{p_1}v_{q_1}^*\cdots v_{p_n}\id_{\Delta^{q_n}}v_{p_n}^*\cdots v_{q_1}v_{p_1}^*\\
	&= v_{p_1}v_{q_1}^*v_{p_1}v_{q_1}^*\cdots v_{q^*_{n-1}}\id_{(\Delta^{q_n})_{p_n}}v_{q_{n-1}}\cdots v_{q_1}v_{p_1}^*\\
	&\vdots\\
	& = \id_{(\cdots(\Delta^{q_n})_{p_n})^{q_{n-1}})_{p_{n-1}})\cdots)_{p_1}}.
	\end{align*}
	Hence $ss^*$ is of the form $\id_Y$ for some $Y\in \J(P)$, and since $E(\Irl(P))$ coincides with the set of all such elements (together with zero), we have $E(\Irl(P)) \subseteq\{\id_Y : Y\in \J(P)\}$.
	
	Now, let $B = \{Y\subseteq \Delta:\id_Y\in E(\Irl(P))\}$. Then $B$ satisfies all of the conditions of Definition~\ref{def:constructible}, and since $\J(P)$ is the smallest such set we have $\J(P)\subseteq B$, and so $\{\id_Y : Y\in \J(P)\}\subseteq E(\Irl(P))$.
\end{proof}

\begin{defn}\label{def:cstarmonoid}
	Let $P$ be a cancellative monoid. Then we define $\cpp$ to be the universal unital C*-algebra generated by a set of partial isometries $\{S_p :  p\in P\}$ and projections $\{f_Y :  Y\in \J(P)\}$ such that
	\begin{enumerate}
		\item $S_pS_q = S_{pq}$ for all $p,q\in P$,\label{it1:universal_def}
		\item $f_Yf_Z = f_{Y\cap Z}$ for all $Y, Z\in \J(P)$,\label{it2:universal_def}
		\item $f_\Delta = 1, f_\emptyset = 0$,\label{it3:universal_def}
		\item $S_pf_YS_p^* = f_{Y_p}$ for all $Y\in \J(P)$, $p\in P$, and\label{it4:universal_def}
		\item $S_p^*f_YS_p = f_{Y^p}$ for all $Y\in \J(P)$, $p\in P$.\label{it5:universal_def}
	\end{enumerate}
\end{defn}
We claim that the sets $\{e_Y:Y\in \J(P)\}$ and $\{J_p:p\in P\}$ satisfy the relations in Definition~\ref{def:cstarmonoid}. First, Lemma~\ref{lem:Ja_partial_isometric_reps} implies \ref{it1:universal_def} is satisfied. Items \ref{it2:universal_def} and \ref{it3:universal_def} are clearly satisfied. Furthermore, \eqref{eq:JeJ} implies that $e_Y\in \ctspp$ for all $Y\in \J(P)$ and also that items \ref{it4:universal_def} and \ref{it5:universal_def} are satisfied. Since the generators all have norm one and the relations are satisfied in the particular C*-algebra $\ctspp$, the universal algebra defined in Definition~\ref{def:cstarmonoid} exists by \cite[Section~1]{Bl85}.

In what follows we study this C*-algebra for LCM monoids.
\subsection{LCM monoids}

The works \cite{ABLS19, BOS18, BLS16, BLS18, BS16, LL20, LL21, LiB19, NS19, Stam15, Stam17, StLCM} and others focus on a special class of left cancellative semigroups, called the {\em right LCM semigroups}. 
Here we define a natural corresponding notion in our setting.


\begin{defn}\label{def:LCM}
	A monoid $P$ is called {\em right LCM} if for all $p,q\in P$, $pP\cap qP$ is either empty or equal to $rP$ for some $r\in P$. It is called {\em left LCM} if for all $p,q\in P$, $Pp\cap Pq$ is either empty or equal to $Pr$ for some $r\in P$. We say $P$ is an {\em LCM monoid} if it is both right LCM and left LCM. 
\end{defn}

\begin{ex}
	{\bf Doubly quasi-lattice ordered groups}
	
	These are the prototype for our definition, and were defined in \cite{Tol17}. These are a special class of Nica's quasi-lattice ordered groups \cite{Ni92}.
	
	Let $G$ be a group and suppose $P\subseteq G$ is a subsemigroup of $G$ such that $P\cap P^{-1} = \{1_G\}$.
	One defines two partial orders on $G$ as follows:
	\begin{enumerate}
		\item  $u\leq_l v$ $\iff$ $u^{-1}v\in P \iff v\in uP\iff vP\subseteq uP$.
		\item $u\leq_r v \iff vu^{-1}\in P \iff v\in Pu\iff Pv\subseteq Pu$.
	\end{enumerate} 
	Then $(G,P)$ is said to be a {\em doubly quasi-lattice ordered group} (see \cite[Definition~2.2]{Tol17}) if both of the following are satisfied:
	\begin{enumerate}
		\item Every finite set with a common upper bound for $\leq_l$ has a least upper bound for $\leq_l$.
		\item Every finite set with a common upper bound for $\leq_r$ has a least upper bound for $\leq_r$.
	\end{enumerate}
	
	Given such a pair $(G,P)$, $P$ is an LCM monoid.
	To see this, first notice that $P$ must be cancellative by virtue of being contained in a group, and that $P\cap P^{-1} = \{1_G\}$ means that $P$ is a monoid.
	The two conditions in the definition applied to the finite set $\{p,q\}$ for $p,q\in P$ imply that $pP\cap qP$ is either empty or equal to $rP$, where $r$ is the least upper bound of $p$ and $q$ with respect to $\leq_l$. 
	Likewise, $Pp\cap Pq$ is either empty or equal to $Ps$ where $s$ is the least upper bound with respect to $\leq_r$. Hence, $P$ is an LCM monoid.
	
	Notice in this case that the elements $r$ and $s$ are {\em unique}. 
	This is not necessarily true for general LCM monoids, as $rP= ruP$ and $Ps = Pus$ for any invertible element $u$.
\end{ex}

\begin{ex}\label{ex:FreeSemigroups}
	{\bf Free semigroups}
	
	Let $X$ be a finite set (or {\em alphabet}). For $n\in\NN$ we write an element $(a_1, a_2, \dots, a_n)\in X^n$ in the condensed way $a_1a_2\cdots a_n$, and call these elements {\em words} of {\em length} $n$. For $\alpha\in X^n$ we write $|\alpha| = n$. Define $X^0 = \{\epsilon\}$, call $\epsilon$ the {\em empty word}, and let 
	\[
	X^* = \bigcup_{n\geq 0}X^n.
	\]
	Then $X^*$ becomes a monoid when given the operation of concatenation: if $\alpha,\beta\in X^*$ their product is
	\[
	\alpha\beta = \alpha_1\alpha_2\cdots\alpha_{|\alpha|}\beta_1\beta_2\cdots\beta_{|\beta|}.
	\]
	If $w = \alpha\beta$, we say that $\alpha$ is a {\em prefix} of $w$ and that $\beta$ is a {\em suffix} of $w$.
	We also say that $w$ {\em starts with} $\alpha$ and {\em ends with} $\beta$. 
	We will say that $\alpha$ and $\beta$ {\em agree} if either $\alpha$ is a prefix of $\beta$ or $\beta$ is a prefix of $\alpha$.

	This semigroup is clearly cancellative. For $\alpha\in X^*$, $\alpha X^*$ is the set of words which begin with $\alpha$, and $\alpha X^* \cap \beta X^*$ is empty unless $\alpha$ is a prefix of $\beta$ (in which case $\alpha X^* \cap \beta X^* = \beta X^*$) or $\beta$ is a prefix of $\alpha$ (in which case $\alpha X^* \cap \beta X^* = \alpha X^*$). 
	Hence, $X^*$ is right LCM. 
	
	Similarly, $X^*\alpha$ is the set of words which end with $\alpha$, and $X^*\alpha \cap X^*\beta$ is empty unless $\alpha$ is a suffix of $\beta$ (in which case $X^*\alpha \cap X^*\beta =  X^*\beta$) or $\beta$ is a suffix of $\alpha$ (in which case $X^*\alpha \cap X^*\beta = X^*\alpha$). 
	Thus $X^*$ is left LCM and hence an LCM monoid.
\end{ex}

\begin{ex}\label{ex:ssg}
	{\bf Self-similar actions}
	
	We now describe an example which is not a doubly quasi-lattice ordered group.
	Let $X^*$ be as in Example~\ref{ex:FreeSemigroups}, and let $G$ be a group.
	Suppose that $G$ acts on $X^*$ on the left faithfully by length-preserving bijections, i.e.,
	\[
	G\times X^* \to X^*,\hspace{1cm}
	(g, \alpha)\mapsto g\cdot \alpha,\hspace{1cm}
	g\cdot X^n = X^n\text{ for all }g\in G, n\geq 0.
	\]
	Suppose also that we have a {\em restriction map}
	\[
	G\times X^* \to G, \hspace{1cm}
	(g, \alpha)\mapsto \left.g\right|_\alpha,
	\]
	which satisfies
	\[
	g\cdot(\alpha\beta) = (g\cdot \alpha)(\left. g\right|_\alpha\cdot \beta)
	\]
	for all $\alpha,\beta\in X^*$ and for all $g\in G$. 
	Then we call the pair $(G, X)$ a {\em self-similar action}. 
	We record two properties which a self-similar action might satisfy.
	\begin{defn}
		Let $(G,X)$ be a self-similar action. 
		\begin{enumerate}
			\item \cite[Definition 5.4]{EP17} $(G, X)$ is called {\em pseudo-free} if $g\cdot \alpha = \alpha$ and $\left.g\right|_\alpha = 1_G$ for some $\alpha\in X^*$ implies that $g = 1_G$.
			\item \cite[p.13]{Nek04} $(G, X)$ is called {\em recurrent} if for any $h\in G$ and for any $\alpha,\beta\in X^*$ with $|\alpha| = |\beta|$, there exists $g\in G$ such that 
			\[
			g\cdot \alpha = \beta \hspace{1cm}\text{and}\hspace{1cm} \left.g\right|_\alpha = h.
			\]
		\end{enumerate}
	\end{defn}
	To any self-similar action one can associate a right LCM semigroup.
	The {\em Zappa-Sz\'ep product} $X^*\bowtie G$ is the set $X^*\times G$ with the operation
	\[
	(\alpha, g)(\beta, h) = (\alpha(g\cdot \beta), \left.g\right|_\beta h).
	\]
	It was shown in \cite[Theorem~3.8]{BRRW14} that $X^*\bowtie G$ is always a right LCM semigroup. 
	It is well-known that $X^*\bowtie G$ is right cancellative if and only if $(X, G)$ is pseudo-free, see \cite[Proposition~3.11]{LW15} or \cite[Lemma~3.2]{ES16} for proofs.
	
	We have the following characterization for $X^*\bowtie G$  to be LCM.
	\begin{lem}
		\label{lem:SSGLCM}
		Let $(G, X)$ be a self-similar action. Then for any principal left ideals their intersection $X^*\bowtie G(\alpha, g)\cap X^*\bowtie G(\beta, h)$ is either empty or equal to $X^*\bowtie G(\alpha, g)$ or $X^*\bowtie G(\beta, h)$. In particular, $X^*\bowtie G$ is a left LCM monoid if and only if $(G,X)$ is pseudo-free.
	\end{lem}
	\begin{proof}
		Let $(\alpha, g),(\beta, h)\in X^*\bowtie G$ and suppose that  $X^*\bowtie G(\alpha, g)\cap X^*\bowtie G(\beta, h)\neq \emptyset$.
		We suppose, without loss of generality, that $|\alpha|\geq |\beta|$. We claim that $$X^*\bowtie G(\alpha, g)\cap X^*\bowtie G(\beta, h)=X^*\bowtie G(\alpha, g).$$
		
		Since $X^*\bowtie G(\alpha, g)\cap X^*\bowtie G(\beta, h)\neq \emptyset$ we must have some $(\gamma,j),(\lambda,k)\in X^*\bowtie G$ such that
		\begin{align*}
		(\gamma,j)(\alpha, g)&=(\lambda,k)(\beta, h),\\
		(\gamma(j\cdot \alpha),j|_\alpha g)&=(\lambda(k\cdot \beta),k|_\beta h).
		\end{align*}
		This implies  $\gamma(j\cdot \alpha)= \lambda(k\cdot \beta)$ and $j|_\alpha g=k|_\beta h$. This indicates that $X^*(j\cdot \alpha)\cap X^*(k\cdot \beta)\neq \emptyset$. Since the action is length preserving $|j\cdot \alpha|\geq|k\cdot \beta|$, therefore $X^*(j\cdot \alpha)\cap X^*(k\cdot \beta)=X^*(j\cdot \alpha)$ from the properties of the free monoid. Thus there exists some $\theta\in X^*$ such that $(j\cdot \alpha)=\theta (k\cdot \beta)$.
		
		We now can prove our claim by showing that $(\alpha, g)\in X^*\bowtie G(\beta, h)$ and hence $X^*\bowtie G(\alpha, g)\subseteq X^*\bowtie G(\beta, h)$.
		
		We will show that $(\alpha, g)=((j^{-1}\cdot \theta),j^{-1}|_\theta k)(\beta, h)$. Compute, using the Zappa-Sz\'ep properties in \cite[Lemma 3.1]{BRRW14}:
		\begin{align*}
		((j^{-1}\cdot \theta),j^{-1}|_\theta k)(\beta, h)&=((j^{-1}\cdot \theta)((j^{-1}|_\theta k)\cdot\beta),(j^{-1}|_\theta k)|_{\beta} h).
		\end{align*}
		To make this easier to follow we handle the two components separately.
		\begin{align*}
		(j^{-1}\cdot \theta)((j^{-1}|_\theta k)\cdot\beta)&=(j^{-1}\cdot \theta)((j^{-1}|_\theta\cdot (k\cdot\beta)) &\text{by (B2)}\\
		&=j^{-1}\cdot(\theta(k\cdot \beta))&\text{(B5)}\\
		&=j^{-1}\cdot(j\cdot \alpha)& \text{because $\theta(k\cdot \beta)=j\cdot\alpha$}\\
		&=(j^{-1}j)\cdot\alpha\\
		&=\alpha.
		\end{align*}
		\begin{align*}
		(j^{-1}|_\theta k)|_{\beta} h&=(j^{-1}|_\theta)|_{k\cdot \beta}k|_\beta h&\text{(B8)}\\
		&=j^{-1}|_{\theta (k\cdot \beta)}k|_\beta h&\text{(B6)}\\
		&=j^{-1}|_{j\cdot\alpha}k|_\beta h& \text{because $\theta(k\cdot \beta)=j\cdot\alpha$)}\\
		&=j^{-1}|_{j\cdot\alpha}j|_\alpha g &\text{$k|_\beta h=j|_\alpha g$ by assumption}\\
		&=(j^{-1}j)|_\alpha g&\text{(B8)}\\
		&=e|_\alpha g\\
		&=g.
		\end{align*}
		Therefore $((j^{-1}\cdot \theta),j^{-1}|_\theta k)(\beta, h)=(\alpha, g)$. We have thus proved our claim and shown that $X^*\bowtie G(\alpha, g)\cap X^*\bowtie G(\beta, h)=X^*\bowtie G(\alpha, g).$
		
		So in any case we have that $X^*\bowtie G$ satisfies the LCM property on both the right and left, and is always left cancellative. Therefore it is an LCM monoid if and only if it is right cancellative. By \cite[Proposition~3.11]{LW15}, this is equivalent to $(G,X)$ being pseudo-free.
	\end{proof}	
	In the case that $(G, X)$ is recurrent, the principal left ideals are linearly ordered by inclusion.
	\begin{lem}\label{lem:recurrent}
		Let $(G, X)$ be a self-similar action. If $(G,X)$ is recurrent, then the set of principal left ideals of $X^*\bowtie G$ is given by
		\[
		\{I_n :  n\in\ZZ, n\geq 0\},\hspace{1cm} \text{where}\hspace{1cm}			
		I_n = \{ (\beta, h) :  |\beta|\geq n\}.
		\]
		In particular, the set of principal left ideals of $X^*\bowtie G$ is linearly ordered by inclusion.		
	\end{lem}
	\begin{proof}
		Take $(\alpha, g)\in X^*\bowtie G$. 
		We claim that $X^*\bowtie G(\alpha, g) = I_{|\alpha|}$. 
		That  $X^*\bowtie G(\alpha, g) \subseteq I_{|\alpha|}$ is clear, because multiplying elements of $X^*\bowtie G$ increases the length of the first coordinate.
		So suppose that $(\beta, h)\in I_n$, and write $\beta = \gamma\delta$ with $|\delta| = \alpha$. 
		Find $k\in G$ such that $k\cdot \alpha = \delta$ and $\left.k\right|_\alpha = hg^{-1}$. 
		Then
		\[
		(\gamma, k)(\alpha, g) =(\gamma (k\cdot \alpha), \left.k\right|_\alpha g)                = (\gamma \delta, hg^{-1}g)
		= (\beta, h).	
		\]
		Hence $(\beta, h)\in X^*\bowtie G(\alpha, g)$, proving that $X^*\bowtie G(\alpha, g) = I_n$. 
		
		To complete the proof, we simply notice that $I_n \cap I_m = I_{\min\{m,n\}}$, so the intersection of two principal left ideals is another principal left ideal. 
	\end{proof}

	We will not go into further detail on self-similar actions here---the interested reader is directed to \cite{Nek05}, \cite{Nek09}, \cite{LRRW14}, \cite{BRRW14}, or \cite{ES16}.
	
	A natural question to ask about a given cancellative semigroup is: does it embed into a group?
	Lawson and Wallis proved in \cite[Theorem~5.5]{LW15} that $X^*\bowtie G$ embeds into a group if and only if it is cancellative, and this occurs if and only if $(G,X)$ is pseudo-free.
	Hence, all of our examples above are group-embeddable.
	
	We are thankful to an anonymous referee for pointing out that not every LCM semigroup embeds into a group: in their paper about interval monoids arising from posets, Dehornoy and Wehrung \cite[Proposition~B]{DW17} constructed an LCM monoid which does not.
\end{ex}
We now prove some general facts about LCM monoids and the transformations \eqref{eq:littlevpdef}.
\begin{lem}\label{lem:constructibleLCM}
	Let $P$ be an LCM monoid, and let $p,q,r\in P$. 
	Then
	\begin{enumerate}
		\item \label{it1:constructibleLCM}$\Delta_p = \Delta_q \iff pP = qP \iff p = qu$ for some $u\in U(P)$,
		\item \label{it2:constructibleLCM}$\Delta^p = \Delta^q \iff Pp = Pq \iff p = uq$ for some $u\in U(P)$,
		\item \label{it3:constructibleLCM}$\Delta_p\cap \Delta_q =\begin{cases} \Delta_r &\text{if }pP\cap qP = rP\\\emptyset&\text{if }pP\cap qP = \emptyset\end{cases},$
		\item \label{it4:constructibleLCM} $\Delta^p\cap \Delta^q =\begin{cases} \Delta^r &\text{if }Pp\cap Pq = Pr\\\emptyset&\text{if }Pp\cap Pq = \emptyset\end{cases},$
		\item \label{it5:constructibleLCM}$(\Delta_p\cap \Delta^q)_r =\begin{cases}  \Delta_{rp}\cap \Delta^{r_1} &\text{if }Pr\cap Pq = Pk\text{ with }r_1r=q_1q = k\\\emptyset&\text{if }Pr\cap Pq = \emptyset\end{cases},$
		\item \label{it6:constructibleLCM}$(\Delta_p\cap \Delta^q)^r =\begin{cases} \Delta_{r_1}\cap \Delta^{qr}&\text{if }pP\cap rP = kP\text{ with }pp_1= rr_1 = k\\\emptyset&\text{if }pP\cap rP = \emptyset\end{cases}.$	
	\end{enumerate}
	Hence, the set of constructible ideals $\J(P)$ has the closed form
	\begin{equation}
	\J(P) = \{\Delta_p\cap \Delta^q: p,q\in P\}\cup\{\emptyset\},\label{eq:JPformLCM}
	\end{equation}
\end{lem}
\begin{proof}
	\ref{it1:constructibleLCM} 
	First suppose that $\Delta_p = \Delta_q$.
	Thus for every $(bpx,px)\in \Delta_p$, there exists $(aqy,y)\in \Delta$ such that $(aqy,qy) = (bpx,px)$.
	Since $px\in qP$ for all $x$, we have $pP\subseteq qP$. 
	By a symmetric argument we get $qP\subseteq pP$, so we have $pP = qP$.  
	
	If $pP = qP$ then $q\in pP$ and $p\in qP$ implies $p = qu$ and $q = pv$ for some $u,v\in P$. 
	Hence $p = pvu$, and cancellativity implies $vu = 1$, so $u$ is invertible. 
	
	Finally if $p = qu$ for some $u\in U(P)$ then $\Delta_p\ni (bpx,x) = (bqux,x)\in \Delta_q$. But since $q = pu^{-1}$ a symmetric argument gives $\Delta_q\subseteq \Delta_p$. 
	
	\ref{it2:constructibleLCM} Similar to \ref{it1:constructibleLCM}.
	
	\ref{it3:constructibleLCM} First, suppose that $pP\cap qP = rP$, and therefore we can find $p_1, q_1\in P$ such that $pp_1 = qq_1 = r$. 
	The intersection $\Delta_p\cap \Delta_q= \{(bpx, px) :  b, x\in P \}\cap \{(aqy, qy)  :  a,y\in P\}$
	is nonempty, because the element $(pp_1, pp_1) = (qq_1, qq_1) = (r, r)$ is common to both (taking $a=b=1$, $x=p_1$ and $y=q_1$).
	We claim that $\Delta_p\cap \Delta_q = \Delta_r$. 
	Suppose $(bpx,px) = (aqy, qy) \in \Delta_p\cap \Delta_q$.
	Then since $px = qy$, this element is in $pP\cap qP = rP$, so there exists $c\in P$ such that $px= qy = rc$.
	Hence $(bpx, px) = (brc,rc)\in \Delta_r$.
	On the other hand, if $(brc, rc)\in \Delta_r$, then $(brc,rc )= (bpp_1c, pp_1c) = (bqq_1c, qq_1c)$ is clearly in $\Delta_p\cap \Delta_q$. 
	Hence, $\Delta_p\cap \Delta_q = \Delta_r$.
	
	If $pP\cap qP = \emptyset$, then the above shows that $\Delta_p\cap \Delta_q = \emptyset$, and hence the first product is zero.
	
	\ref{it4:constructibleLCM} Similar to \ref{it3:constructibleLCM}.
	
	\ref{it5:constructibleLCM} If $\gamma\in \Delta_p\cap \Delta^q$, then $\gamma = (bpx,px) = (cqy,y)$ for some $b,c,x,y\in P$. 
	This implies $y = px$ and hence $b = cq$. Thus
	\begin{equation}\Delta_p\cap \Delta^q = \{(cqpx,px): c,x\in P\}\label{eq:DeltapDeltaq}
	\end{equation}
	which implies that
	\begin{align*}
	(\Delta_p\cap \Delta^q)_r &=\{(arz,rz): (arz,z)=(cqpx,px) \text{ for some }a,c,z,x\in P \}\nonumber \\
	&= \{(arpx, rpx): ar = cq\text{ and }  a,c, x\in P\}\nonumber
	\end{align*}
	If $Pr\cap Pq = \emptyset$ then no such $a,c\in P$ can exist, so $(\Delta_p\cap \Delta^q)_r$ is empty.
	Otherwise, take $\gamma = (arpx,rpx) = (cqpx,rpx)\in (\Delta_p\cap \Delta^q)_r$ so that $ar = cq$.
	Then since $P$ is LCM there exists $k, r_1, q_1\in P$ such that $Pr\cap Pq = kP$ and $r_1r= q_1q = k$.
	Since $ar = cq$ is an element of $Pk$, there exists $k_1\in P$ such that $ar = cq = k_1k$.
	Thus $ar = k_1r_1r$ and hence $a = k_1r_1$.
	So $\gamma = (k_1r_1rpx, rpx)$, which is an element of both $\Delta^{r_1}$ and $\Delta_{rp}$.
	So we have $(\Delta_p\cap \Delta^q)_r\subseteq \Delta_{rp}\cap\Delta^{r_1}$.
	
	To show $\Delta_{rp}\cap\Delta^{r_1}\subseteq(\Delta_p\cap \Delta^q)_r$ in the case of a nonempty intersection, take $\gamma \in \Delta_{rp}\cap\Delta^{r_1}$. 
	Then $\gamma = (brpx,rpx) = (cr_1y,y)$ for some $b,x,c,y\in P$, which implies $y = rpx$ so that $brpx = cr_1rpx = cq_1qpx$ and hence $br = cq_1q$.
	Thus $\gamma = (brpx,rpx)$ with $br = (cq_1)q$, implying $\gamma \in (\Delta_p\cap \Delta^q)_r$.
	
	\ref{it6:constructibleLCM} Similar to \ref{it5:constructibleLCM}.
	
	The statement \eqref{eq:JPformLCM} at the end of the lemma now follows immediately, since points \ref{it3:constructibleLCM}--\ref{it6:constructibleLCM} imply that $\{\Delta_p\cap \Delta^q:p,q\in P\}\cup\{\emptyset\}$ is a subset of $\J(P)$ which is closed under intersections and the operations $Y\mapsto Y_p$ and $Y\mapsto Y^p$.
\end{proof}
We note that it is necessary to union with $\{\emptyset\}$ in \eqref{eq:JPformLCM} because it may be that the intersection of two sets of that type never results in the empty set.

Now we describe the inverse semigroup generated by the $v_p$ given in \eqref{eq:littlevpdef}.
\begin{lem} \label{lem:common_products} Let $P$ be an LCM monoid and let $p, q, r\in P$.
	\begin{enumerate}
		\item \label{it1:common_products}Suppose $pP\cap qP = rP$ and that $pp_1 = qq_1 = r$. Then
		\begin{align}
		v_p^*v_q &= v_{p_1}v_r^*v_q \label{eq:p*q1}\\
		&= v_p^*v_rv_{q_1}^*.\nonumber
		\end{align}
		Furthermore, if instead $pP\cap qP = \emptyset$, this product is zero.
		\item \label{it2:common_products} Suppose $Pp\cap Pq = Pr$ and that $p_2p = q_2q = r$. Then
		\begin{align}
		v_pv_q^* &= v_pv_r^*v_{q_2} \label{eq:pq*1}\\
		&= v_{p_2}^*v_rv_q^*\label{eq:pq*2}.
		\end{align}
		Furthermore, if instead $Pp\cap Pq = \emptyset$, this product is zero.
	\end{enumerate}
	\begin{proof}
		\ref{it1:common_products} If $pP\cap qP = \emptyset$ then $\Delta_p\cap \Delta_q = \emptyset$ by Lemma~\ref{lem:constructibleLCM}.\ref{it3:constructibleLCM}. 
		Hence the domain of $v_p^*$ does not intersect the range of $v_q$, so $v_p^*v_q = 0$.
		
		If $pP\cap qP = rP$ and that $pp_1 = qq_1 = r$, then Lemma~\ref{lem:constructibleLCM}.\ref{it3:constructibleLCM} implies $\Delta_p\cap\Delta_q = \Delta_r$ and so
		\begin{align*}
		v_p^*v_q &= v_p^*v_pv_p^*v_qv_q^*v_q\\
		&= v_p^*\Id_{\Delta_p}\Id_{\Delta_q}v_q\\
		&= v_p^*\Id_{\Delta_r}v_q\\
		&= v_p^*v_rv_r^*v_q\\
		&= v_p^*v_pv_{p_1}v_r^*v_q\\
		&= v_p^*v_pv_{p_1}v_{p_1}^*v_{p_1}v_r^*v_q &\text{since }v_{p_1}v_{p_1}^*v_{p_1}= v_{p_1}\\
		&= v_{p_1}v_{p_1}^*v_p^*v_pv_{p_1}v_r^*v_q &\text{since idempotents commute}\\
		&= v_{p_1}v_{pp_1}^*v_{pp_1}v_r^*v_q\\
		&= v_{p_1}v_r^*v_q&\text{since }pp_1 = r\text{ and }v_r^* = v_r^*v_rv_r^*.
		\end{align*}
		This establishes the first equality. 
		For the second, 
		\begin{align*}
		v_p^*v_q &= v_p^*v_rv_r^*v_q&\text{as above}\\
		&= v_p^*v_rv_{q_1}^*v_q^*v_q\\
		&= v_p^*v_rv_{q_1}^*v_{q_1}v_{q_1}^*v_q^*v_q &\text{since }v_{q_1}^*v_{q_1}v_{q_1}^*= v_{q_1}^*\\
		&= v_p^*v_rv_{q_1}^*v_q^*v_qv_{q_1}v_{q_1}^* &\text{since idempotents commute}\\
		&= v_p^*v_rv_{qq_1}^*v_{qq_1}v_{q_1}^*\\
		&=v_p^*v_rv_{q_1}^*&\text{since }qp_1 = r\text{ and }v_r = v_rv_r^*v_r.
		\end{align*}
		\ref{it2:common_products} These calculations are very similar to those in \ref{it1:common_products} and are left to the reader.	
	\end{proof}
\end{lem}
\begin{prop}\label{prop:sLCMform}
	Let $P$ be an LCM monoid, and let $\Irl(P)$ be as in \eqref{eq:s}. Then
	\begin{equation}\label{eq:sLCMform}
	\Irl(P) = \{v_pv_q^*v_r :  p, q, r\in P, q\in rP\cap Pp \}\cup\{0\}.
	\end{equation}
	Furthermore, we have that
	\[
	E(\Irl(P)) = \{v_pv_{qp}^*v_q :  p, q\in P\}\cup\{0\}.
	\]
\end{prop}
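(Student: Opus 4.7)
The strategy is to recycle the reductions done in the proof of Lemma~\ref{lem:C*spanningsets}: Lemma~\ref{lem:isg-relations} and Lemma~\ref{lem:Sidempotentproduct} supply exactly the same relations for $\{v_p\} \subseteq \I(\Delta)$ that hold among $\{J_p\} \subseteq \ctspp$, so every algebraic manipulation performed there transfers to the inverse semigroup setting without change. For \eqref{eq:sLCMform}, an arbitrary element of $\Irl(P)$ is a finite product of $v_p$'s and $v_p^*$'s, and after applying $v_pv_q = v_{pq}$ together with $v_1 = \id_\Delta$ at the endpoints, it can be arranged into the alternating shape $v_{p_1}v_{q_1}^*v_{p_2}v_{q_2}^*\cdots v_{q_n}^* v_{p_{n+1}}$. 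The first computation in the proof of Lemma~\ref{lem:C*spanningsets} collapses any inner triple $v_p^* v_q v_r^*$ either to $0$ (when $pP\cap qP=\emptyset$ or $Pq\cap Pr=\emptyset$) or to some $v_a v_b^* v_c$; iterating reduces the whole product to a single triple $v_pv_q^*v_r$, or to $0$. The second computation in that proof, inserting $v_p^*v_p$ and $v_rv_r^*$ and invoking Lemma~\ref{lem:Sidempotentproduct}, further arranges that $q \in Pp\cap rP$, establishing \eqref{eq:sLCMform}.

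For the characterization of $E(\Irl(P))$, the preceding lemma already gives $E(\Irl(P)) = \{\id_Y : Y\in \J(P)\}$, and Lemma~\ref{lem:constructibleLCM} identifies $\J(P) = \{\Delta_p\cap\Delta^q : p,q\in P\}\cup\{\emptyset\}$. Since $v_{qp} = v_qv_p$ by Lemma~\ref{lem:isg-relations}.1 we have $v_{qp}^* = v_p^*v_q^*$, and hence Lemma~\ref{lem:isg-relations}.2 yields
\[
v_pv_{qp}^*v_q \;=\; v_pv_p^*\,v_q^*v_q \;=\; \id_{\Delta_p}\,\id_{\Delta^q} \;=\; \id_{\Delta_p\cap \Delta^q}.
\]
As $p,q$ range over $P$ this exhausts every nonzero idempotent, and $0 = \id_\emptyset$ completes the description. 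Note also that the triple $v_p v_{qp}^* v_q$ is indeed of the form appearing in \eqref{eq:sLCMform}, since $qp \in qP \cap Pp$.

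The only genuinely delicate point is the bookkeeping in the first-stage reduction, where a pair of least common multiples must be tracked simultaneously at every step; since this bookkeeping is already carried out explicitly in the proof of Lemma~\ref{lem:C*spanningsets}, no new ingredient is required here.
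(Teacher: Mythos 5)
Your argument is correct, and it diverges from the paper's proof in an interesting way. For \eqref{eq:sLCMform} the paper does not reduce an arbitrary word directly: it instead verifies that the right-hand side is closed under taking products and inverses (via the explicit formula $(v_pv_q^*v_r)(v_av_b^*v_c) = v_{pq_1}v_{r_2raa_2}^*v_{b_1c}$ and $(v_pv_q^*v_r)^* = v_{r_1}v_q^*v_{p_1}$), and then invokes minimality of the generated inverse subsemigroup. Your word-reduction via the two-stage computation of Lemma~\ref{lem:C*spanningsets} reaches the same conclusion and is legitimate, since the only identities used there --- $v_pv_q=v_{pq}$, $v_pv_p^*=\id_{\Delta_p}$, $v_p^*v_p=\id_{\Delta^p}$, the LCM product of idempotents, and commutation of idempotents --- all hold in $\I(\Delta)$ by Lemmas~\ref{lem:isg-relations} and~\ref{lem:Sidempotentproduct}; the one thing you implicitly rely on, namely that the inverse subsemigroup generated by $\{v_p\}$ consists exactly of finite words in the $v_p$ and $v_p^*$, is standard. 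The payoff of the paper's route is that the closure computations are reused verbatim to define the abstract product on $\s_P$ in Proposition~\ref{prop:SSPiso}, which your version would still need to supply separately. For the idempotent description your argument is genuinely different and cleaner: the paper computes $ss^*$ for a general $s=v_pv_q^*v_r$ and observes every idempotent arises this way, whereas you combine the earlier identification $E(\Irl(P))=\{\id_Y: Y\in\J(P)\}$ with the closed form $\J(P)=\{\Delta_p\cap\Delta^q\}\cup\{\emptyset\}$ from Lemma~\ref{lem:constructibleLCM} and the one-line identity $v_pv_{qp}^*v_q=\id_{\Delta_p\cap\Delta^q}$; this avoids any new computation. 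Both approaches share the cosmetic blemish that $0$ may not actually lie in $\Irl(P)$ when no two principal ideals are disjoint, but that is the paper's convention, not a gap in your argument.
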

\begin{proof}
	Let $C$ denote the right hand side of \eqref{eq:sLCMform}. Then $C\subseteq E(\Irl(P))$ is trivial, because $\Irl(P)$ is generated by the $v_p$. 
	
	We show $C \subseteq E(\Irl(P))$ by showing that the given elements are closed under product and inverse, and hence form an inverse semigroup containing $v_p$ for each $p$ ($v_p\in \Irl(P)$ because $v_p = v_pv_p^*v_p$). 
	Since $\Irl(P)$ is the smallest such inverse semigroup, we will be done.

	Take $p,q,r\in P$ with $q\in rP\cap Pp$. 
	Then there exist $r_1, p_1\in P$ such that $q = rr_1 = p_1p$.
	We calculate
	\begin{align*}
	(v_pv_q^*v_r)^* &= v_r^*v_qv_p^*\\
	&= v_{r_1}v_q^*v_qv_p^* &\text{by }\eqref{eq:p*q1}\\
	&= v_{r_1}v_q^*v_qv_q^*v_{p_1}&\text{by }\eqref{eq:pq*1}\\
	&= v_{r_1}v_q^*v_{p_1}
	\end{align*}
	and so the right hand side of \eqref{eq:sLCMform} is closed under taking inverses.
	
	To show it is closed under taking products, take $p, q, r, a, b, c\in P$ such that $q \in rP\cap Pp$ and $b\in cP\cap Pa$. 
	Then there exist $r_1, p_1, a_1, c_1\in P$ such that $ q = rr_1 = p_1p$ and $b = cc_1 = a_1a$. 
	If the product $(v_pv_q^*v_r)(v_av_b^*v_c)$ is zero we are done, so at every step in the calculation below we will assume the product is nonzero, i.e. that $raP\cap qP = kP$ and $Pra\cap Pb = Pl$. We have
	\begin{align*}
	(v_pv_q^*v_r)(v_av_b^*v_c) &= v_pv_q^*v_{ra}v_b^*v_c \\
	&= v_p(v_{q_1}v_k^*v_{ra})v_b^*v_c &\text{ by \eqref{eq:p*q1} with } raa_2 = qq_1 = k\\
	&= v_{pq_1}v_k^*(v_{ra}v_l^*v_{b_1})v_c &\text{ by \eqref{eq:pq*2} with }r_2ra = b_1b = l\\
	&= v_{pq_1}v_{a_2}^*v_{ra}^*v_{ra}v_{ra}^*v_{r_2}^*v_{b_1c}\\
	&= v_{pq_1}v_{a_2}^*v_{ra}^*v_{r_2}^*v_{b_1c}\\
	&= v_{pq_1}v_{r_2raa_2}^*v_{b_1c} 	  
	\end{align*}
	for some $a_2, q_1, k, r_2, b_1, l\in P$. 
	Furthermore, since
	\[
	rr_2aa_2 = r_2qq_1 = r_2p_1pq_1 \in Ppq_1,
	\]
	\[
	rr_2aa_2 = b_1ba_2 = b_1cc_1a_2\in b_1cP,
	\]
	the product is of the form given in \eqref{eq:sLCMform}, so we have proven the first statement.
	
	Let $s = v_pv_q^*v_r$ for $p,q,r\in P$ with $q\in Pp\cap rP$, so that $q = p_1p = rr_1$ for some $r_1, p_1\in P$. Then
	\begin{align*}
	ss^* &= v_pv_q^*v_rv_r^*v_qv_p^*\\
	&= \Id_{((\Delta_r)^q)_p}&\text{by }\eqref{eq:vIdv}\\
	& = \Id_{(\Delta_1\cap \Delta^q)_p}&\text{by }\ref{it6:constructibleLCM}\text{ with }q1 = rr_1\\
	& = \Id_{\Delta_p\cap \Delta^{p_1}}&\text{by }\ref{it5:constructibleLCM}\text{ with }1q = p_1p\\
	&= v_pv_p^*v_{p_1}^*v_p\\
	&= v_pv_{p_1p}^*v_p.
	\end{align*}
	Every idempotent is of the form $ss^*$ for some $s\in \Irl(P)$, so we are done.
\end{proof}

We now show that the form of the elements of $\Irl(P)$ given in \eqref{eq:sLCMform} is essentially unique.
\begin{lem}\label{lem:s-elements}
	Let $P$ be an LCM monoid, and suppose that  $q\in Pp\cap rP$ and $b\in Pa\cap cP$. Then $v_pv_q^*v_r = v_av_b^*v_c$ if and only if there exist invertible elements $u, v\in U(P)$ such that $p = au$, $q = vbu$, and $r = vc$. 
\end{lem}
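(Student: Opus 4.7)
My plan is to handle the two directions separately. The backward direction is a short calculation using multiplicativity of $v$, while the forward direction proceeds by comparing the actions of the two partial bijections at carefully chosen basepoints.

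For ($\Leftarrow$), if $p = au$, $q = vbu$, $r = vc$, then Lemma~\ref{lem:isg-relations} gives
\[
v_p v_q^* v_r \;=\; v_a v_u v_u^* v_b^* v_v^* v_v v_c \;=\; v_a (v_u v_u^*) v_b^* (v_v^* v_v) v_c.
\]
Since $u, v \in U(P)$, Lemma~\ref{lem:deltacalcs}.\ref{it1:deltacalcs} gives $\Delta_u = \Delta = \Delta^v$, so $v_u v_u^* = \id_\Delta = v_v^* v_v$, and these cancel to leave $v_a v_b^* v_c$.

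For ($\Rightarrow$), I would first write $q = p_1 p = r r_1$ and $b = a_1 a = c c_1$ (as in the hypotheses) and then compute $v_p v_q^* v_r$ as an explicit partial bijection of $\Delta$: threading an element through $v_r$, $v_q^*$, and $v_p$ in turn yields the map $(\beta q y, r_1 y) \mapsto (\beta q y, p y)$ for $\beta, y \in P$, and analogously $v_a v_b^* v_c$ is $(\alpha b z, c_1 z) \mapsto (\alpha b z, a z)$ for $\alpha, z \in P$. The canonical basepoint $(q, r_1) \mapsto (q, p)$ (taking $\beta = y = 1$) must lie in the graph of $v_a v_b^* v_c$, producing $\alpha_0, z_0 \in P$ with $r_1 = c_1 z_0$, $p = a z_0$, and $q = \alpha_0 b z_0$; symmetrically, $(b, c_1) \mapsto (b, a)$ in the graph of $v_p v_q^* v_r$ produces $\beta_0, y_0 \in P$ with $c_1 = r_1 y_0$, $a = p y_0$, and $b = \beta_0 q y_0$. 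Then $r_1 = c_1 z_0 = r_1 y_0 z_0$ together with cancellativity forces $y_0 z_0 = 1$, and the mirror identity gives $z_0 y_0 = 1$, so $u := z_0 \in U(P)$ with $u^{-1} = y_0$. Setting $v := \alpha_0$, the relations $p = au$ and $q = vbu$ are immediate; substituting $y_0 = u^{-1}$ into $b = \beta_0 q y_0$ and cancelling $bu$ on the right gives $\beta_0 v = 1$, which upgrades to $v \in U(P)$ by the standard cancellation argument. Finally, $r r_1 = q = v b u = v c c_1 u = v c r_1$, and right-cancelling $r_1$ yields $r = vc$.

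The main obstacle will be obtaining the clean description of $v_p v_q^* v_r$ as a partial bijection of $\Delta$; once that is in hand, identifying the invertible elements $u$ and $v$ is a routine bookkeeping exercise powered entirely by cancellativity in $P$.
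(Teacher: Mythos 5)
Your proposal is correct and follows essentially the same route as the paper: the reverse direction is the same cancellation of $v_uv_u^*$ and $v_v^*v_v$, and the forward direction evaluates the two partial bijections at the same two basepoints $(q,r_1)$ and $(b,c_1)$, then extracts $u$ and $v$ via cancellativity exactly as the paper does. Your write-up is in fact slightly more complete, since you make the graph of $v_pv_q^*v_r$ explicit and spell out the final deduction $r=vc$, which the paper leaves implicit.
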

\begin{proof}
	Take $p, q, r, a, b, c\in P$ such that $q \in rP\cap Pp$ and $b\in cP\cap Pa$. 
	Then there exist $r_1, p_1, a_1, c_1\in P$ such that $ q = rr_1 = p_1p$ and $b = cc_1 = a_1a$. 
	Suppose that the maps $v_pv_q^*v_r$ and $v_av_b^*v_c$ are equal.
	Then since 
	\[
	v_pv_q^*v_r(q, r_1) = (q, p) = v_av_b^*v_c(q, r_1),
	\]
	there must exist $u,v\in P$ such that $q = vbu$, $au = p$ and $c_1u = r_1$.
	Similarly, since 
	\[
	v_av_b^*v_c(b, c_1) = (b,a) = v_pv_q^*v_r(b, c_1),
	\]
	there must exist $x,y\in P$ such that $b = yqx$, $px = a$, and $r_1x = c_1$.
	Since 
	\[
	a = px = aux, \hspace{1cm} r_1 = c_1u = r_1xu,
	\]
	cancellativity gives us that $ux = 1 = xu$.
	Furthermore, we have $q = vbu = vyqxu = vyq$ which implies that $yv = 1 = vy$.
	So $u, v$ are invertible elements of $P$ and $p = au, q = vbu$, and $r = vc$.
	
	To get the other direction, clearly if such invertible elements exist, then $	v_pv_q^*v_r = v_{au}v_{vbu}^*v_{vc} = v_av_uv_u^*v_bv_v^*v_vv_c = v_av_b^*v_c.$
\end{proof}

Lemma~\ref{lem:s-elements} allows us to give an abstract characterization of $\Irl(P)$.

\begin{prop}\label{prop:SSPiso}
	Let $P$ be an LCM monoid, and consider the equivalence relation on $P\times P \times P$ given by 
	\begin{equation}\label{eq:equivdef}
	(p,q,r)\sim (a,b,c) \hspace{0.5cm} \iff \exists u, v\in U(P) \text{ such that }p = au, q = vbu, r = vc
	\end{equation}
	and let $[p,q,r]$ denote the equivalence class of $(p,q,r)$ under this relation. 
	Then the set 
	\begin{equation}\label{eq:SP}
	\s_{P} = \{[p,q,r]  :  p, q, r\in P, q \in rP\cap Pp\}\cup\{0\}
	\end{equation}
	is an inverse semigroup when given the operations
	\[
	[p,q,r]^* = [r_1, q, p_1]\hspace{1cm}\text{where } q = rr_1 = p_1p,
	\]
	and 
	\begin{equation}\label{eq:SPproduct}
	[p,q,r][a,b,c] = \begin{cases}[pq_1, r_1raa_1, b_1c]&\text{if }raP\cap qP = kP; raa_1 = qq_1 = k\\&\text{and }Pra\cap Pb = Pl; r_1ra = b_1b = l\\0&\text{otherwise}\end{cases},
	\end{equation}
	The map $v_pv_q^*v_p \mapsto [p,q,r]$ and $0\mapsto 0$ is an isomorphism of inverse semigroups between $\Irl(P)$ and $\s_{P}$.  
	The set of idempotents of this inverse semigroup is given by
	\[
	E(\s_{P}) = \{[p,qp,q]\in\s_{P} :  q, p\in P\}\cup\{0\}.
	\]
\end{prop}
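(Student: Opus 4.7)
The plan is to build the isomorphism first and then transport the semigroup structure, relying almost entirely on Proposition~\ref{prop:sLCMform} and Lemma~\ref{lem:s-elements}.

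First, I would check that $\sim$ is an equivalence relation on $P\times P\times P$. Reflexivity follows by taking $u=v=1$; symmetry uses that $u,v\in U(P)$ implies $u^{-1},v^{-1}\in U(P)$ (so $(a,b,c)\sim(p,q,r)$ is recovered by $u^{-1},v^{-1}$); and transitivity follows by composing the invertible elements. This makes $\s_P$ well-defined as a set.

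Next, I would define the map $\Phi:\s_P\to \Irl(P)$ by $\Phi([p,q,r]) = v_pv_q^*v_r$ and $\Phi(0)=0$. Lemma~\ref{lem:s-elements} says exactly that this is well-defined and injective, while Proposition~\ref{prop:sLCMform} says that it is surjective. So $\Phi$ is a bijection, and the plan is to transport the inverse semigroup structure of $\Irl(P)$ across $\Phi$ and then check that the transported operations match the formulas in the statement. Since the operations on $\Irl(P)\subseteq \I(\Delta)$ are automatically well-defined, so are the induced operations on $\s_P$; nothing needs to be checked about independence of representatives once the formulas are shown to agree with composition in $\I(\Delta)$.

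For the inverse formula, given $q=rr_1=p_1p$, a direct computation (or the identity $v_q = v_rv_{r_1} = v_{p_1}v_p$) yields
\[
(v_pv_q^*v_r)^* \;=\; v_r^*v_qv_p^* \;=\; v_{r_1}v_q^*v_{p_1},
\]
so $[p,q,r]^*=[r_1,q,p_1]$ as claimed. The right-hand side lies in $\s_P$ since $q=rr_1\in p_1P\cap Pr_1$. For the product, I would simply appeal to the calculation already carried out in the proof of Proposition~\ref{prop:sLCMform}, which shows
\[
(v_pv_q^*v_r)(v_av_b^*v_c) \;=\; v_{pq_1}v_{r_2raa_2}^*v_{b_1c},
\]
with the auxiliary elements defined by the LCM intersections $raP\cap qP = kP$ (with $raa_2=qq_1=k$) and $Pra\cap Pb=Pl$ (with $r_2ra=b_1b=l$); after renaming the indices ($a_2\leftrightarrow a_1$ and $r_2\leftrightarrow r_1$) this is exactly formula \eqref{eq:SPproduct}. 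The same calculation shows that if either intersection is empty, the product is $0$.

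Finally, the idempotent description falls out immediately: $\Phi$ is a bijection, so $E(\s_P) = \Phi^{-1}(E(\Irl(P)))$, and Proposition~\ref{prop:sLCMform} gives $E(\Irl(P)) = \{v_pv_{qp}^*v_q : p,q\in P\}\cup\{0\}$, whence $E(\s_P) = \{[p,qp,q] : p,q\in P\}\cup\{0\}$. The main obstacle in the whole argument is really bookkeeping the auxiliary elements in the product formula, but this work has already been done in Proposition~\ref{prop:sLCMform} and needs only to be matched against \eqref{eq:SPproduct}.
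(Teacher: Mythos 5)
Your proposal is correct and follows essentially the same route as the paper, whose proof is just the one-line remark that everything follows from Lemma~\ref{lem:s-elements} and the calculations in the proof of Proposition~\ref{prop:sLCMform}; you have simply written out the transport-of-structure argument that this remark leaves implicit. The details you supply (well-definedness of $\sim$, the bijection $\Phi$, and the matching of the inverse and product formulas after renaming the auxiliary elements) are all accurate.
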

\begin{proof}
	All the statements follow from Lemma~\ref{lem:s-elements} and the calculations in the proof of Proposition~\ref{prop:sLCMform}.
\end{proof}

From now on we will work with elements in the form \eqref{eq:SP}.
\begin{lem}\label{lem:SPidempotents}
	Let $P$ be an LCM monoid, let $\s_P$ be as in \eqref{eq:SP}, and let $p, q, a, b\in P$. Then
	\begin{equation}\label{eq:idempotentmultiplication}
	[p,qp,q][a, ba, b] = \begin{cases}
	[r,sr,s] &\text{if }rP= pP\cap aP\text{ and }Ps = Pb\cap Pq\\
	0&\text{otherwise}
	\end{cases}.
	\end{equation}
	In particular, we have
	\[
	[p,qp,q] \leqslant [a,ba,b] \hspace{0.5cm}\iff\hspace{0.5cm} pP\subseteq aP \text{ and }Pq\subseteq Pb.
	\]
\end{lem}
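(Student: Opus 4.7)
The plan is to apply the product formula from Proposition~\ref{prop:SSPiso} directly to $[p, qp, q][a, ba, b]$ and push the outer factors $q$ and $a$ through using cancellativity to expose the intersections $pP \cap aP$ and $Pq \cap Pb$.

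First I would unpack the formula: identifying $(p, qp, q)$ with the formula's first triple and $(a, ba, b)$ with the second, the two intersection conditions become $qaP \cap qpP = kP$ and $Pqa \cap Pba = Pl$. The key observation is that $P$ is cancellative, so left multiplication by $q$ and right multiplication by $a$ are both injective; hence
\[
qaP \cap qpP \;=\; q(aP \cap pP), \qquad Pqa \cap Pba \;=\; (Pq \cap Pb)\,a.
\]
Consequently the product is nonzero exactly when $pP \cap aP$ and $Pq \cap Pb$ are both nonempty. Writing $pP \cap aP = rP$ with $pu_1 = au_2 = r$ and $Pq \cap Pb = Ps$ with $v_1 q = v_2 b = s$, we may take $k = qr$ and $l = sa$; cancellativity then forces the formula's witnesses to be $q_1 = u_1$, $a_1 = u_2$, $r_1 = v_1$, $b_1 = v_2$. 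Substituting into the formula's output $[pq_1,\, r_1 r a a_1,\, b_1 c]$ and simplifying $pu_1 = r$, $v_1 q \cdot au_2 = s\cdot r$, $v_2 b = s$, one obtains $[r, sr, s]$, which is manifestly of the idempotent form $[p', q'p', q']$ with $p' = r$ and $q' = s$.

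For the partial order statement I would use that idempotents in an inverse semigroup commute, so $e \leqslant f$ is equivalent to $ef = e$. Combining this with the first part reduces the claim to: $[r, sr, s] = [p, qp, q]$ if and only if $pP \subseteq aP$ and $Pq \subseteq Pb$. By Lemma~\ref{lem:s-elements}, the equality $[r,sr,s]=[p,qp,q]$ is witnessed by invertibles $u, v \in U(P)$ with $r = pu$ and $s = vq$, which are respectively equivalent to $rP = pP$ and $Ps = Pq$; given $rP = pP \cap aP$ and $Ps = Pq \cap Pb$, these collapse to the inclusions $pP \subseteq aP$ and $Pq \subseteq Pb$. The converse is immediate by taking $r = p$ and $s = q$ in the first part, which then yields the product $[p, qp, q]$ directly.

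The main obstacle is purely bookkeeping: the product formula in Proposition~\ref{prop:SSPiso} contains six auxiliary elements with generic names, and one must match them cleanly against the LCM intersections $pP \cap aP$ and $Pq \cap Pb$ on the two sides. Once the two cancellativity identities above are established, all the witnesses are pinned down uniquely, the formula collapses to $[r, sr, s]$, and the rest is a straightforward application of the earlier lemmas.
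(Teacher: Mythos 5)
Your proof is correct, but it takes a different route from the paper's. You apply the general six-witness product formula \eqref{eq:SPproduct} directly to $[p,qp,q][a,ba,b]$, and the heart of your argument is the pair of cancellativity identities $qaP\cap qpP = q(aP\cap pP)$ and $Pqa\cap Pba = (Pq\cap Pb)a$, which reduce the formula's intersection conditions to nonemptiness of $pP\cap aP$ and $Pq\cap Pb$ and pin down all the witnesses ($q_1=u_1$, $a_1=u_2$, $r_1=v_1$, $b_1=v_2$ once $k=qr$ and $l=sa$ are chosen). The paper instead avoids the general product formula entirely: it factors $[p,qp,q]=[p,p,1][1,q,q]$ and $[a,ba,b]=[a,a,1][1,b,b]$, commutes the idempotents to group the ``right-ideal'' factors $[p,p,1][a,a,1]$ and the ``left-ideal'' factors $[1,b,b][1,q,q]$ separately, and then quotes Lemma~\ref{lem:Sidempotentproduct}, which has already computed $v_pv_p^*v_av_a^*$ and $v_q^*v_qv_b^*v_b$. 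The paper's route is shorter because it reuses that prior computation; yours is more self-contained given Proposition~\ref{prop:SSPiso} and has the side benefit of showing explicitly how the general multiplication formula collapses on idempotents, at the cost of the bookkeeping you acknowledge. For the order statement both arguments coincide: reduce $e\leqslant f$ to $ef=e$ and use the first part together with Lemma~\ref{lem:s-elements} (equality of idempotent classes forces $rP=pP$ and $Ps=Pq$). One small presentational caution: the symbol $r$ plays two roles in your write-up (the third entry of the first triple in \eqref{eq:SPproduct} and the LCM of $p$ and $a$); you navigate this correctly, but it is worth renaming one of them if this were to be written out in full.
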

\begin{proof}
	To verify \eqref{eq:idempotentmultiplication}, we calculate
	\begin{align*}
	[p,qp,q][a,ba,b] &= [p,p,1][1,q,q][a,a,1][1,b,b]\\
	&=[p,p,1][a,a,1][1,b,b][1,q,q]\\
	&=\begin{cases}
	[r,r,1][1,s,s] &\text{if }rP= pP\cap aP\text{ and }Ps = Pb\cap Pq\\
	0&\text{otherwise}
	\end{cases}\\
	&=\begin{cases}
	[r,sr,s] &\text{if }rP= pP\cap aP\text{ and }Ps = Pb\cap Pq\\
	0&\text{otherwise},
	\end{cases}
	\end{align*}
	where the third line is by Lemma~\ref{lem:constructibleLCM} and Proposition~\ref{prop:SSPiso}.
	Now we suppose $[p,qp,q] \leqslant [a,ba,b]$, that is $[p,qp,q] [a,ba,b] = [p,pq,q]$.
	Then by \eqref{eq:idempotentmultiplication}, we have that $pP = pP\cap aP$ and $Pq = Pq\cap Pb$, i.e. $pP\subseteq aP$ and $Pq\subseteq Pb$.
	Conversely, if $pP\subseteq aP$ and $Pq\subseteq Pb$, then one easily sees by \eqref{eq:idempotentmultiplication} that $[p,qp,q][a,ba,b]= [p,qp,q]$.
\end{proof}

It will frequently be convenient to use the following shorthand notation for often-used elements of $\s_{P}$: 
\begin{equation*}
[p] := [p,p,p]
\end{equation*}
This corresponds to $v_p$ above, and elements of this form generate $\s_{P}$.
\begin{lem}
	For an LCM monoid $P$ and $[p,q,r]\in \s_{P}$, we have
	\begin{equation*}
	[p,q,r][p,q,r]^* = [p,p_1p, p_1], \hspace{1cm} [p,q,r]^*[p,q,r] = [r_1, rr_1, r]
	\end{equation*}
	where $q = p_1p = rr_1$. In addition, 	for all $p,q\in P$ we have
	\begin{equation*}
	[p]^* = [p,p,p]^* = [1,p,1], \hspace{1cm}[p][q] = [pq],\hspace{1cm} [p]^*[q]^* = [qp]^*.
	\end{equation*}

\end{lem}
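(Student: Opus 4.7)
The plan is to verify each identity by translating through the isomorphism $\s_P\cong\Irl(P)$ of Proposition~\ref{prop:SSPiso} and computing in the $v_p$-picture, where Lemma~\ref{lem:isg-relations}---particularly $v_pv_q=v_{pq}$ together with the idempotent identifications $v_pv_p^*=\id_{\Delta_p}$ and $v_p^*v_p=\id_{\Delta^p}$---reduces everything to a few lines. This is cleaner than invoking the product formula \eqref{eq:SPproduct} directly, since it avoids tracking the auxiliary elements $q_1,a_1,r_1,b_1$ explicitly.

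For the first identity $[p,q,r][p,q,r]^*=[p,p_1p,p_1]$ (with $q=p_1p=rr_1$), the computation has essentially been done at the end of the proof of Proposition~\ref{prop:sLCMform}: starting from $v_pv_q^*v_rv_r^*v_qv_p^*$, I use $qP\subseteq rP$ to absorb $v_rv_r^*=\id_{\Delta_r}$, then substitute $v_q=v_{p_1}v_p$ and commute the idempotents $v_pv_p^*$ and $v_{p_1}^*v_{p_1}$ to collapse the expression to $v_pv_{p_1p}^*v_{p_1}$, which corresponds to $[p,p_1p,p_1]$ under the isomorphism. The second identity is the mirror computation: in $v_r^*v_qv_p^*v_pv_q^*v_r$ I use $Pq\subseteq Pp$ to absorb $v_p^*v_p=\id_{\Delta^p}$, then write $v_q=v_rv_{r_1}$ and commute the surviving idempotents to obtain $v_{r_1}v_{rr_1}^*v_r$, i.e.\ $[r_1,rr_1,r]$. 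Alternatively, the second identity can be deduced from the first by applying it to $s^*=[r_1,q,p_1]$, but the direct calculation is equally short.

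The remaining three identities are immediate under the isomorphism. Since $[p]\leftrightarrow v_pv_p^*v_p=v_p$, taking adjoints gives $[p]^*\leftrightarrow v_p^*=v_1v_p^*v_1$, which is the triple $[1,p,1]$. The identity $[p][q]=[pq]$ reduces to $v_pv_q=v_{pq}$ from Lemma~\ref{lem:isg-relations}.1, and $[p]^*[q]^*=[qp]^*$ follows by taking adjoints: $v_p^*v_q^*=(v_qv_p)^*=v_{qp}^*$.

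No real obstacle is present; the only bookkeeping I must do along the way is to confirm that each output expression $[p',q',r']$ actually lies in $\s_P$, i.e.\ that its middle entry satisfies $q'\in r'P\cap Pp'$ as required by \eqref{eq:SP}. For the first identity this reduces to $p_1p\in p_1P\cap Pp$, for the second to $rr_1\in rP\cap Pr_1$, and the remaining output triples $[1,p,1]$, $[pq,pq,pq]$ and $[1,qp,1]$ satisfy the requirement automatically.
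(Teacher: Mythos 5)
Your proposal is correct and is essentially the argument the paper intends: the lemma's proof is ``left to the reader,'' and the computation for $ss^* = v_pv_q^*v_rv_r^*v_qv_p^* = v_pv_{p_1p}^*v_{p_1}$ already appears verbatim at the end of the proof of Proposition~\ref{prop:sLCMform}, with the remaining identities following from Lemma~\ref{lem:isg-relations} exactly as you describe. Your attention to verifying that each output triple satisfies the membership condition $q'\in r'P\cap Pp'$ is a sensible extra check, and all your verifications go through.
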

\begin{proof}
	Left to the reader.
\end{proof}
\begin{lem}\label{lem:E*unitary}
	Let $P$ be an LCM monoid, and let $\s_{P}$ be as in \eqref{eq:SP}. Then $\s_{P}$ is $E^*$-unitary. 
\end{lem}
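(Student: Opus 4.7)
The plan is to unpack the hypothesis $se = e$ directly using the product formula \eqref{eq:SPproduct} together with the uniqueness statement in Lemma~\ref{lem:s-elements}. Write $s = [p,q,r]$ with $q\in Pp\cap rP$, and, using the description of $E(\s_P)$ in Proposition~\ref{prop:SSPiso}, write the nonzero idempotent $e$ in the canonical form $e = [a, ba, b]$ for some $a, b\in P$. The goal is to force enough structural identities on $p,q,r$ to recognise $s$ as an idempotent.

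Since $se = e \neq 0$, the product $[p,q,r][a,ba,b]$ is nonzero. Applying \eqref{eq:SPproduct}, there exist $\alpha,\beta,\gamma,\delta \in P$ with $raP\cap qP = kP$ satisfying $ra\alpha = q\beta = k$, and with $Pra\cap P(ba) = Pl$ satisfying $\gamma ra = \delta(ba) = l$; moreover
\[
se = [p\beta,\ \gamma ra\alpha,\ \delta b].
\]
Setting this equal to $[a,ba,b]$ and invoking Lemma~\ref{lem:s-elements} yields $u,v\in U(P)$ with $p\beta = au$, $\gamma ra\alpha = v(ba)u$, and $\delta b = vb$.

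From here cancellativity does all the work. Right-cancelling $b$ in $\delta b = vb$ gives $\delta = v \in U(P)$. The identities $l = \delta(ba) = \gamma ra = v(ba)$ combined with $\gamma ra\alpha = v(ba)u$ permit left-cancellation of $v(ba)$, producing $\alpha = u \in U(P)$. Substituting $a\alpha = au = p\beta$ into $ra\alpha = q\beta$ gives $rp\beta = q\beta$, and right-cancelling $\beta$ delivers $q = rp$. Therefore $s = [p, rp, r]$, which matches the form $[p', q'p', q']$ with $p' = p$ and $q' = r$ exhibiting the idempotents of $\s_P$ in Proposition~\ref{prop:SSPiso}. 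Hence $s \in E(\s_P)$, as required.

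The main obstacle is purely bookkeeping: the formula \eqref{eq:SPproduct} introduces four auxiliary elements $\alpha,\beta,\gamma,\delta$, and one must track which side of cancellativity is available at each step. The key observation that makes the argument collapse is that $\delta$ and $\alpha$ are both forced to lie in $U(P)$; once this is clear, a single right-cancellation of $\beta$ delivers the identity $q = rp$ that places $s$ inside $E(\s_P)$.
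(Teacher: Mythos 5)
Your proof is correct and follows essentially the same strategy as the paper's: compute $se$ explicitly and use cancellativity in $P$ to force $q = rp$, so that $s$ has the idempotent form $[p,rp,r]$. The only difference is in execution --- you invoke the closed multiplication formula \eqref{eq:SPproduct} together with the normal-form uniqueness of Lemma~\ref{lem:s-elements} applied to a general idempotent $e=[a,ba,b]$, whereas the paper first restricts $e\leqslant s^*s$ and then carries out the computation at the level of the generators $[p]$, concluding from the fact that $se$ is an idempotent; both routes land on the same cancellation argument.
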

\begin{proof}
	We must show that for $s\in \s_{P}$ and $e\in E(\s_{P})\setminus \{0\}$, $se = e$ implies $s$ is an idempotent.
	Let $s = [p,q,r]$ and suppose we have such an $e$.
	Since $se = e$, we must have $e\leqslant s^*s$, so $e = [b, cb, c]$ for some $b\in r_1P$ and $c\in Pr$ where $rr_1 = q = p_1p$. 
	Hence, $b = r_1b_1$ and $c = c_1r$.
	Calculating $se$, we have
	\begin{align*}
	se &= [p,q,r][b,cb,c]\\
	&= [p][q]^*[r][b][b]^*[c]^*[c]\\
	&= [p][q]^*[r][c]^*[c][b][b]^*\\
	&= [p][q]^*[r][c_1r]^*[c_1r][b][b]^*\\
	&= [p][q]^*[r][c_1r]^*[c_1r][b][b]^*\\
	&= [p][q]^*[r][r]^*[c_1]^*[c_1][r][b][b]^* &\text{since }
	[c_1r]^*[c_1r] = [r]^*[c_1]^*[c_1][r]\\
	&= [p][q]^*[c_1]^*[c_1][r][r]^*[r][b][b]^*&\text{because idempotents commute}\\
	&= [p][q]^*[c_1]^*[c_1][r][b][b]^*\\
	&= [p][r_1]^*[r]^*[c_1]^*[c_1][r][b][b]^*\\
	&= [p][r_1]^*[c]^*[c][b][b]^*\\
	&= [p][r_1]^*[b][b]^*[c]^*[c]&\text{because idempotents commute}\\
	&= [p][r_1]^*[r_1][b_1][b_1]^*[r_1]^*[c]^*[c]\\
	&= [p][b_1][b_1]^*[r_1]^*[r_1][r_1]^*[c]^*[c]\\
	&= [p][b_1][b_1]^*[r_1]^*[c]^*[c]\\
	&= [pb_1, cr_1b_1,c], 
	\end{align*}
	where the last line is because $cr_1 = c_1rr_1 = c_1p_1p \implies cr_1b_1\in Ppb_1\cap cP$. Now if $se =e$, the above element is an idempotent. 
	Hence $cpb_1 = cr_1b_1$, whence cancellativity implies that $p = r_1$. 
	Thus $[p,q,r] = [p,rr_1, r] = [p,rp,r] \in E(\s_{P})$.
\end{proof}

\begin{lem}\label{lem:C*spanningsets}
	Let $P$ be an LCM monoid.
	Then $\{S_pS_q^*S_r : p,q,r\in P, q\in Pp\cap rP\}\cup\{0\}$ is closed under multiplication and adjoint, and so forms an inverse semigroup of partial isometries in $\cpp$. In particular, its span is dense in $\cpp$ and the span of 
	$\{J_pJ_q^*J_r : p,q,r\in P, q\in Pp\cap rP\}$ is dense in $\ctspp$.
\end{lem}
\begin{proof}
	It is enough to show that $\{S_pS_q^*S_r : p,q,r\in P, q\in Pp\cap rP\}\cup\{0\}$ is closed under products, because it contains each $S_p$ and $S_p^*$. But the same calculations as in the proofs of Lemma~\ref{lem:common_products} and Proposition~\ref{prop:sLCMform} hold because 
	\[
	S_pS_p^*S_qS_q^*= S_pf_{\Delta}S_p^*S_qf_{\Delta}S_q^* = f_{\Delta_p}f_{\Delta_q}  = f_{\Delta_p\cap\Delta_q} = \begin{cases}
	f_{\Delta_r} & pP\cap qP = rP\\0 & pP\cap qP = \emptyset
	\end{cases},
	\]
	\[
	S_p^*S_pS_q^*S_q= S_p^*f_{\Delta}S_pS_q^*f_{\Delta}S_q = f_{\Delta^p}f_{\Delta^q}  = f_{\Delta^p\cap\Delta^q} = \begin{cases}
	f_{\Delta^r} & Pp\cap Pq = Pr\\0 & Pp\cap Pq = \emptyset
	\end{cases}.
	\] 
	All that was needed in those proofs was the above relations and the fact that idempotents commute, but all the idempotents there were of the form $\Id_Y$ for a constructible set $Y$, and the corresponding $f_Y$ commute by \ref{def:cstarmonoid}.\ref{it2:universal_def}. 
	That the same holds in $\ctspp$ follows from the universal property of $\cpp$. 
\end{proof}

As mentioned in the introduction, the choice of left regular representation over the right affects Li's construction. We can now show that in the LCM case, our construction puts the left and right multiplication on equal footing, similar to \cite[Corollary~3.5]{Tol17}. In what follows, $P^{\text{op}}$ denotes the opposite semigroup of $P$, which has the same elements as $P$ with multiplication
\[
p\cdot q = qp.
\]
It is clear that $P$ is LCM if and only if $P^{\text{op}}$ is.
\begin{prop}\label{prop:iso_opp}
	Let $P$ be an LCM monoid. Then $\cpp\cong C_{\text{ts}}^*(\pop)$.
\end{prop}
\begin{proof}
	Let $\{S_p\}_{p\in P} \cup \{f_Y\}_{Y\in \J(P)}$ and $\{T_p\}_{p\in \pop} \cup \{g_Z\}_{Z\in \J(\pop)}$ be the universal generating sets for $\cpp$ and $C_{\text{ts}}^*(\pop)$ respectively. Also write
	\begin{align*}
	\Delta &= \{(bx,x): b,x\in P\}, &\Gamma & = \{(c\cdot y, y): c,y\in \pop\}. 
	\end{align*}
	Define $h: \J(P)\to \J(\pop)$ by $h(\Delta_p\cap \Delta^q) = \Gamma^p\cap \Gamma _q$ and $h(\emptyset) = \emptyset$. We claim that the sets $\{T_p^*\}_{p\in P}$ and $\{g_{h(Y)}\}_{Y\in\J(P)}$ satisfy \ref{it1:universal_def}--\ref{it5:universal_def} in Definition~\ref{def:cstarmonoid}. Points \ref{it1:universal_def} and \ref{it3:universal_def} are straightforward, and \ref{it5:universal_def} is similar to \ref{it4:universal_def}. We prove \ref{it2:universal_def} and \ref{it4:universal_def}.
	
	For \ref{it2:universal_def}, if $Y, Z\in \J(P)$, with $Y= \Delta_p\cap \Delta^q$, $Z=\Delta_a\cap \Delta^b $, then 
	\[
	g_{h(Y)}g_{h(Z)} = g_{\Gamma^p\cap \Gamma_q}g_{\Gamma^a\cap \Gamma_b} = g_{(\Gamma^p\cap \Gamma^a)\cap(\Gamma_q\cap\Gamma_b)}.
	\]
	If either $P\cdot p\cap P\cdot a$ or $q\cdot P\cap b\cdot P$ is empty, then one of $pP\cap aP$ or $Pq\cap Pb$ is empty, so both sides are zero. On the other hand, if $P\cdot p\cap P\cdot a = P\cdot k$ and $q\cdot P\cap b\cdot P = \ell\cdot P$, then
	\[
	g_{h(Y)}g_{h(Z)} = g_{\Gamma^k\cap\Gamma_\ell} = g_{h(\Delta_k\cap \Delta^\ell)} = g_{h(Y\cap Z)}.
	\]
	For \ref{it4:universal_def}, let $p,q,r\in P$ and let $Y = \Delta_p\cap \Delta^q$. If $Pr\cap Pq = Pk$ for some $k\in P$ with $r_1r = q_1q = k$ then Lemma~\ref{lem:constructibleLCM} implies $(\Delta_p\cap \Delta^q)_r = \Delta_{rp}\cap \Delta^{r_1}$. Since we also have $r\cdot r_1 = q\cdot q_1 = k$, it also implies $(\Gamma^p\cap \Gamma_q)^r = \Gamma^{p\cdot r} \cap \Gamma_{r_1}$. Thus we have
	\begin{align*}
	T^*_pg_{h(Y)}(T_p^*)^* &= T^*_pg_{\Gamma^p\cap \Gamma_q}T_p = g_{\Gamma^{p\cdot r} \cap \Gamma_{r_1}} = g_{h(\Delta_{rp}\cap \Delta^{r_1})} = g_{h((\Delta_p\cap \Delta^q)_r)}\\
	&= g_{h(Y_r)}.
	\end{align*}
	Furthermore, if $Pr\cap Pq= \emptyset$, then the above calculation and Lemma~\ref{lem:constructibleLCM} imply that both sides of Definition~\ref{def:cstarmonoid}.\ref{it4:universal_def} are zero.
	
	Since $\{T_p^*\}_{p\in P}$ and $\{g_{h(Y)}\}_{Y\in\J(P)}$ satisfy \ref{it1:universal_def}--\ref{it5:universal_def} in Definition~\ref{def:cstarmonoid}, we can find a $*$-homomorphism $\Phi: \cpp\to C_{\text{ts}}^*(\pop)$ such that $\Phi(S_p) = T^*_p$ and $\Phi(f_Y) = g_{h(Y)}$. This argument can be repeated (because $(\pop)^{\text{op}} = P$) giving us a $*$-homomorphism $\Psi: C_{\text{ts}}^*(\pop)\to \cpp$ such that $\Psi(T_p) = S^*_p$ and $\Psi(g_Y) = f_{h^{-1}(Y)}$. Since $\Phi\circ \Psi$ and $\Psi\circ \Phi$ are the identity on the respective generating sets, both $\Phi$ and $\Psi$ must be isomorphisms.
\end{proof}

\subsection{Actions of inverse semigroups on their spectra and the associated groupoids}
In this section we recall the definitions of the spectrum and tight spectrum of a semilattice.
We also recall the definitions of the universal and tight groupoid of an inverse semigroup.
The discussion here attempts to summarize the important points of \cite{Ex08}---see there for a more detailed exposition.
For references on \'etale groupoids, see \cite{R80} and \cite{Sim20}.

Let $E$ be a semilattice, or equivalently a commutative inverse semigroup where every element is idempotent. 
We assume that $E$ has a bottom element 0. 
A {\em filter} in $E$ is a nonempty proper subset $\xi\subseteq E$ which is
\begin{itemize}
	\item {\em upwards closed} i.e. $e\in \xi$ and $fe = e$ implies $f\in \xi$ and
	\item {\em downwards directed} i.e. $e,f\in\xi$ implies $ef\in \xi$.
\end{itemize}
We let $\Ef$ denote the set of filters in $E$.
We identify the power set of $E$ with the product space $\{0,1\}^E$, and give $\Ef\subseteq \{0,1\}^E$ the subspace topology.
With this topology, $\Ef$ is called the {\em spectrum} of $E$.

Given $e\in E$ and $F\fs E$ the set
\[
U(e, F) = \{\xi\in \Ef : e\in \xi, \xi\cap F = \emptyset\}
\]
is a clopen subset of $\Ef$, and sets of this type generate the topology on $\Ef$. 

A filter is called an {\em ultrafilter} if it is not properly contained in another filter. 
The subspace of ultrafilters is denoted $\Eu\subseteq \Ef$, and its closure is denoted $\overline{\Eu} = \Et$ and is called the {\em tight spectrum} of $E$.

Let $S$ be an inverse semigroup with idempotent semilattice $E$ and let $X$ be a topological space.
An {\em action} of $S$ on $X$ is a pair $\theta = (\{\theta_s\}_{s\in S},\{D_e\}_{e\in E})$ where $D_e\subseteq X$ is open for all $e\in E$, $\theta_s: D_{s^*s}\to D_{ss^*}$ is a homeomorphism for all $s\in S$, $\theta_s\circ \theta_t = \theta_{st}$ for all $s,t\in S$ and $\theta_s^{-1} = \theta_{s^*}$. 
We also insist that $\theta_0$ is the empty map and $\cup D_e = X$.
When $\theta$ is an action of $S$ on $X$ we write $\theta:S\curvearrowright X$.

Given an action $\theta: S\curvearrowright X$, one puts an equivalence relation on the set $\{(s,x)\in S\times X: x\in D_{s^*s}\}$ stating $(s,x)\sim (t,y)$ if and only if $x = y$ and there exists $e\in E$ such that $x\in D_e$ and $se = te$.
We write $[s,t]$ for the equivalence class of $(s,t)$.
Then the {\em transformation groupoid} for $\theta$ is the set of equivalence classes
\[
\g^\theta = \{[s,x] : s\in S, x\in D_{s^*s}\}
\]
with range, source, inverse, and partially defined product given by
\[
r[s,x] = \theta_s(x), \hspace{.3cm}d[s,x] = x,\hspace{.3cm} [s,x]^{-1} = [s^*, \theta_s(x)], \hspace{.3cm}[t, \theta_s(x)][s,x] = [ts,x]
\]
This is an \'etale groupoid when given the topology generated by sets of the form
\begin{equation}\label{eq:ISG_action_basic_open_set}
\Theta(s,U) = \{[s,x]\in \g^\theta: x\in U\}\hspace{1cm} s\in S, U\subseteq D_{s^*s} \text{ open}.
\end{equation}

An inverse semigroup acts naturally on its spectrum. 
If $S$ is an inverse semigroup with idempotent semilattice $E$, we define an action $\alpha: S\curvearrowright \Ef$ by
\[
D_{e} = \{\xi\in \Ef: e\in \xi\} = U(e, \emptyset), \hspace{0.2cm}
\alpha_s: D_{s^*s}\to D_{ss^*}, \hspace{0.2cm}
\alpha_s(\xi) = \{ses^*: e\in \xi\}^{\uparrow}
\]
where the superscript $\uparrow$ indicates the set of all elements above some element in the set. 
The transformation groupoid associated to $\alpha$ is called the {\em universal groupoid of $S$}.

The space of tight filters is invariant under this action, so we get an action $\alpha:S\curvearrowright\Et$.
The transformation groupoid for this action is called the {\em tight groupoid of $S$}.

If $E$ and $F$ are semilattices with zero, then $E\times F$ is a semilattice with pointwise meet (product).
Consider the equivalence relation $\sim$ on $E\times F$ given by
\[
(0,0) \sim (e, 0)
\sim (0, f) \hspace{1cm}\text{for all } e\in E, f\in F
\]
Then $\sim$ is easily seen to be a {\em congruence}, that is $as \sim at$ and $sa \sim ta$ whenever $s\sim t$ and $a\in E\times F$. 
We denote the set of equivalence classes $$E\times F/\sim :=E\times_0F$$ and denote $[(0, 0)]_\sim := 0$. Then we have
\[
E\times_0F = \{(e,f) :  e\in E\setminus\{0\}, f\in F\setminus\{0\}\}\cup\{0\}.
\]
which is a semilattice under the inherited  operation
\[
(e_1,f_1)(e_2, f_2) = \begin{cases}
(e_1e_2,f_1f_2)&\text{if }e_1e_2\neq 0\text{ and } f_1f_2\neq 0\\
0&\text{otherwise}
\end{cases}.
\]

\begin{lem}\label{lem:semilatticeproduct}
	Let $E$ and $F$ be semilattices each with top and bottom elements.
	Then there is a homeomorphism $\vp:\widehat{(E\times_0F)}_0 \to \widehat E_0\times \widehat F_0$ which sends ultrafilters onto ultrafilters. 
	In particular, the tight spectrum of $E\times_0F$ is homeomorphic to $\widehat E_{\text{tight}}\times \widehat F_{\text{tight}}$.
\end{lem}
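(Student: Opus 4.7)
The plan is to build an explicit homeomorphism $\vp$ by coordinate projection of filters, with an inverse $\psi$ that takes a pair of filters to the natural ``product'' filter, then show this bijection restricts to ultrafilters and extends to the closures.

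First, for a filter $\xi$ in $E\times_0 F$, I would define
\[
\xi_E = \{e\in E : (e, 1_F) \in \xi\}, \qquad \xi_F = \{f\in F : (1_E, f)\in \xi\},
\]
and set $\vp(\xi) = (\xi_E, \xi_F)$. Using upward closure of $\xi$ and the top elements $1_E, 1_F$, these coincide with $\{e : (e,f)\in \xi \text{ for some } f\}$ and $\{f : (e,f)\in \xi \text{ for some } e\}$. I would verify $\xi_E$ is a filter in $E$: it is nonempty and proper (since $0_E \in \xi_E$ would force $(0_E, 1_F) = 0 \in \xi$), upward closed (from $(e,f) \in \xi$ and $e \leqslant e'$ one has $(e',f) \geqslant (e, f)$ in $\xi$), and downward directed (since the product $(e_1, f_1)(e_2, f_2)$ lies in $\xi$, it is nonzero, so $e_1e_2 \neq 0$). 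Symmetrically for $\xi_F$. For the inverse, I would define
\[
\psi(\eta, \zeta) = \{(e,f) \in (E\times_0 F)\setminus \{0\}: e\in \eta,\ f\in \zeta\},
\]
and verify it is a filter in $E\times_0 F$ using properness and downward directedness of $\eta$ and $\zeta$ separately.

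Second, I would verify $\vp \circ \psi = \id$ (immediate from the definitions, using top elements) and $\psi\circ \vp = \id$: given $\xi$ and $(e,f)$ with $e\in \xi_E$ and $f\in \xi_F$, pick $f', e'$ with $(e, f'), (e', f) \in \xi$; taking their product gives $(ee', ff') \in \xi$, which must be nonzero by properness, and since $(ee', ff') \leqslant (e,f)$, upward closure yields $(e,f) \in \xi$. Continuity of both $\vp$ and $\psi$ reduces to checking sub-basic open sets of the form $\{\xi : x\in \xi\}$ and $\{\xi : x\notin \xi\}$ inherited from the product topology on $\{0,1\}^S$: the equivalences $e\in \xi_E \iff (e, 1_F) \in \xi$, $f\in \xi_F \iff (1_E, f)\in \xi$, and $(e,f)\in \psi(\eta, \zeta) \iff e\in \eta\text{ and } f\in \zeta$ translate such a condition on one side into a single-coordinate condition on the other.

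Third, I would show the bijection restricts to ultrafilters. If $\xi$ is an ultrafilter in $E\times_0 F$ and $\xi_E \subsetneq \eta$ for some filter $\eta$ in $E$, then $\psi(\eta, \xi_F)$ is a filter strictly containing $\xi = \psi(\vp(\xi))$, contradicting maximality; hence $\xi_E$, and symmetrically $\xi_F$, are ultrafilters. Conversely, if $\eta$ and $\zeta$ are ultrafilters and $\psi(\eta, \zeta) \subseteq \xi$, then $\eta \subseteq \xi_E$ and $\zeta \subseteq \xi_F$, so maximality forces equalities, giving $\xi = \psi(\eta, \zeta)$. Thus $\vp$ restricts to a homeomorphism $\widehat{(E\times_0 F)}_\infty \to \Eu \times \widehat{F}_\infty$, and since $\vp$ is a homeomorphism and $\overline{A\times B} = \bar A \times \bar B$ in a product of topological spaces, passing to closures yields the tight spectrum statement.

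The main subtlety I anticipate lies in the continuity of $\vp$: without top elements in $E$ and $F$, the condition $e\notin \xi_E$ would read as the arbitrary intersection $\bigcap_{f\in F} \{\xi : (e,f)\notin \xi\}$, which is closed but not obviously open. The existence of $1_E$ and $1_F$ converts this into the single-coordinate clopen condition $(e, 1_F)\notin \xi$, which is precisely where the hypothesis that $E$ and $F$ have top elements is used.
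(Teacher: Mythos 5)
Your proposal is correct and follows essentially the same route as the paper: the same projection map $\xi\mapsto(\xi_l,\xi_r)$, the same product filter $\eta\times\zeta$ for the inverse direction, the same use of the top elements to make the coordinate conditions clopen, and the same passage to closures for the tight spectra. The only cosmetic difference is that you exhibit an explicit two-sided inverse $\psi$ where the paper checks injectivity and surjectivity separately.
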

\begin{proof}
	Since filters are by definition proper subsets, a filter in $E\times_0 F$ must be a subset of $E\times F$. 
	For any subset $U\subseteq E\times F$ we write
	\[
	U_l = \{e\in E: (e,f)\in U\text{ for some }f\}, \hspace{0.3cm}U_r = \{f\in F: (e,f)\in U\text{ for some }e\}.
	\]
	Note that if $U$ is a filter then $e\in U_l \iff (e,1)\in U$ and $f\in U_r \iff (1,f)\in U$ because filters are upwards closed.
	
	Now define $\vp: \widehat{(E\times_0F)}_0 \to \widehat E_0\times \widehat F_0$ by
	\[
	\vp(\xi) = (\xi_l, \xi_r),\hspace{1cm} \xi \in \widehat{(E\times_0F)}_0.
	\]
	It is clear that both $\xi_l$ and $\xi_r$ are filters, so $\vp$ is well-defined.
	
	To see that $\vp$ is injective, suppose $\vp(\xi) = \vp(\eta)$, so that $\xi_l = \eta_l$ and $\xi_r = \eta_r$. 
	Then $(e,f) \in \xi$ implies  $e\in \xi_l$ and $f\in \xi_r$. This implies that $ e\in \eta_l$ and $f\in \eta_r$, which gives $(e,1), (1,f)\in \eta$ and so $(e,f)\in \eta$. By a symmetric argument we get $\xi = \eta$. 
	
	To see that $\vp$ is surjective, take filters $\xi \subseteq E$ and $\eta\subseteq F$ and consider $\xi\times \eta\subseteq E\times_0 F$. 
	It straightforward to check that $\xi\times\eta$ is a filter, and that $\vp(\xi\times\eta) = (\xi, \eta)$.
	
	To show continuity, take $e\in E$, $Y\fs E$, $f\in F$ and $Z\fs F$ and consider the open set 
	\[
	U = U(e,Y)\times U(f, Z) = \{(\xi,\eta) \in \widehat{E}_0\times\widehat{F}_0 : e\in \xi\subseteq Y^c, f\in \eta\subseteq Z^c\}. \]
	Then we have
	\begin{align*}
	\vp^{-1}(U) &= \{\xi\times \eta \in \widehat{(E\times_0F)}_0 : e\in \xi\subseteq Y^c, f\in \eta\subseteq Z^c\}\\
	&= U((e,f), (Y\times \{1\})\cup (\{1\}\times Z)).
	\end{align*}
	To see the last equality, we have $\xi\times \eta\in \vp^{-1}(U)$ if and only if $e\in \xi$, $f\in \eta$, and $Y\cap \xi = \emptyset = Z\cap \eta$. 
	If $y\in Y$ then $y\notin \xi$ and so $(y,1)\notin \xi\times\eta$; we similarly see that $(1,z)\notin \xi\times\eta$ for all $z\in Z$. 
	Hence $\xi\times \eta\in U((e,f), (Y\times \{1\})\cup( \{1\}\times Z))$ and we have one containment.
	Conversely, if $\xi\times \eta \in U((e,f), (Y\times \{1\})\cup (\{1\}\times Z))$ we have that $e\in\xi, f\in\eta$, and $((Y\times \{1\})\cup (\{1\}\times Z))\cap \xi\times\eta =\emptyset$.
	If $y\in Y$, then $(y,1)\notin \xi\times\eta$ implies $y\notin \xi$, and so $\xi\in U(e, Y)$.
	We similarly have $\eta\in U(f,Z)$ and so $\xi\times\eta\in \vp^{-1}(U)$.
	This shows that $\vp$ is continuous.
	
	Now given a basic open set $U((e,f), Y)$ in the spectrum of $E\times_0 F$, it is similarly checked that $\vp(U((e,f), Y)) = U(e,Y_l)\times U(f,Y_r)$. 
	Hence $\vp$ is a homeomorphism.
	
	Finally, if $\xi\subseteq E\times_0 F$ is an ultrafilter, then $\xi_l$ and $\xi_r$ are clearly ultrafilters too. 
	Conversely, if $\xi\subseteq E$ and $\eta\subseteq F$ are ultrafilters, then $\xi\times \eta$ is as well.
	Since $\vp$ is a homeomorphism we have
	\begin{align*}
	\vp\left(\widehat{(E\times_0F)}_{\text{tight}}\right) &= \vp\left(\overline{\widehat{(E\times_0F)}_{\infty}}\right) = 
	\overline{\vp\left(\widehat{(E\times_0F)}_{\infty}\right)}
	 =
	\overline{\Eu \times \widehat F_\infty}\\
&=  \widehat E_{\text{tight}}\times \widehat F_{\text{tight}}.
	\end{align*}	
\end{proof}
\subsection{The action of $\s_{P}$ on its spectra}
In what follows, we let
\[
P_r=\{pP :  p\in P\}\cup\{\emptyset\}, \hspace{1cm}
P_l=\{Pp :  p\in P\}\cup\{\emptyset\}
\]
which are both semilattices under intersection (due to $P$ being LCM).

\begin{lem}\label{lem:idempotentiso}
	Let $P$ be an LCM monoid and let $\s_{P}$ be as in \eqref{eq:SP}. Then  $E(\s_{P})$ and $P_l\times_0 P_r$ are isomorphic as semilattices, via the map $\phi:E(\s_{P})\to P_l\times_0 P_r$ defined by
	\[
	\phi[p,qp,q]=(Pq,pP), \hspace{1cm} \phi(0) = 0.
	\]
\end{lem}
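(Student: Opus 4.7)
The plan is to show that $\phi$ is a well-defined bijection, and that it preserves the meet operation. Four verifications are needed.

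\textbf{Well-definedness.} First I would check that the formula $\phi[p,qp,q]=(Pq,pP)$ does not depend on the choice of representative. Suppose $[p,qp,q] = [a,ba,b]$ as elements of $\s_P$. By Lemma~\ref{lem:s-elements} applied to the triples $(p,qp,q)$ and $(a,ba,b)$, there exist $u,v\in U(P)$ with $p=au$, $qp = v(ba)u$, and $q=vb$. Hence $pP = auP = aP$ (because $u$ is invertible) and $Pq = Pvb = Pb$, so $(Pq,pP)=(Pb,aP)$.

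\textbf{Semilattice homomorphism.} The product of two nonzero idempotents is given by Lemma~\ref{lem:SPidempotents}:
\[
[p,qp,q][a,ba,b] = \begin{cases} [r,sr,s] & \text{if }rP = pP\cap aP\text{ and }Ps = Pq\cap Pb\\ 0 & \text{otherwise}.\end{cases}
\]
Applying $\phi$ in the nonzero case yields $(Ps,rP) = (Pq\cap Pb,\,pP\cap aP) = (Pq,pP)(Pb,aP)$, which is exactly $\phi[p,qp,q]\cdot\phi[a,ba,b]$ in $P_l\times_0 P_r$. In the zero case, at least one of $pP\cap aP$ or $Pq\cap Pb$ is empty, and by definition of $P_l\times_0 P_r$ the right-hand product is also $0$. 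Together with $\phi(0)=0$, this shows $\phi$ is a semilattice homomorphism.

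\textbf{Injectivity and surjectivity.} For surjectivity, given any nonzero $(Pq,pP)\in P_l\times_0 P_r$, the element $qp$ lies in $Pp\cap qP$, so $[p,qp,q]\in E(\s_P)$ and $\phi[p,qp,q]=(Pq,pP)$. For injectivity, suppose $\phi[p,qp,q]=\phi[a,ba,b]$, i.e.\ $Pq=Pb$ and $pP=aP$. By Lemma~\ref{lem:constructibleLCM} parts 1 and 2, there exist $u,v\in U(P)$ with $p=au$ and $q=vb$. Then $qp = (vb)(au) = v(ba)u$, so the triples $(p,qp,q)$ and $(a,ba,b)$ are related by the equivalence in \eqref{eq:equivdef}, and Lemma~\ref{lem:s-elements} yields $[p,qp,q]=[a,ba,b]$.

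No single step is a real obstacle; the whole argument is essentially a dictionary translation between the equivalence relation \eqref{eq:equivdef} defining $\s_P$ and the invertible-element characterization of equality of principal ideals in Lemma~\ref{lem:constructibleLCM}. The only place to be careful is ensuring that $\phi$ is well-defined without assuming injectivity, which is why Lemma~\ref{lem:s-elements} is invoked in both directions (once for well-definedness and once for injectivity, running the implication backwards).
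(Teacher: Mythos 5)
Your proof is correct and follows essentially the same route as the paper: well-definedness and injectivity via the unit characterization of equality of principal ideals, the homomorphism property via Lemma~\ref{lem:SPidempotents}, and surjectivity from the explicit form of $E(\s_P)$. The only cosmetic difference is that the paper verifies well-definedness by computing $\phi$ directly on a generic representative $(pu,vqpu,vq)$ rather than invoking Lemma~\ref{lem:s-elements}, but the content is identical.
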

\begin{proof}
	To start, note that $\phi$ is well-defined: if $u,v\in U(P)$ we have
	\[
	\phi[pu,vqpu,vq] = (Pvq,puP) = (Pq,pP) = \phi[p,qp,q] \hspace{1cm}\forall p,q\in P.
	\]
	
	If $pP\cap aP = \emptyset$ or $Pq\cap Pb=\emptyset$, then $$\phi([p,qp,q][a,ba,b]) = \phi(0) = 0,$$ while $\phi[p,qp,q]\phi[a,ba,b] = 0$ as well. Otherwise, if they are both nonempty, say $rP= pP\cap aP$ and $Ps = Pb\cap Pq$, then 
	\begin{align*}
	\phi([p,qp,q][a,ba,b]) &= \phi[r, sr,s]& \text{Lemma}~\ref{lem:SPidempotents}\\
	&= (Ps,rP)\\
	&= (Pb\cap Pq,pP\cap aP)\\
	&= (Pq,pP)(Pb, aP)\\
	&= \phi[p,qp,q]\phi[a,ba,a].
	\end{align*}
	Surjectivity is clear, and if $\phi[p,qp,q] = \phi[a,ba,b]$ we have $aP=pP$ and $Pb = Pq$ which implies there exist $u,v\in U(P)$ such that $a= pu$ and $b=vq$, giving us that $[p,qp,q] = [a,ba,b]$. 
\end{proof}
\begin{rmk}We note that our definition of $\phi$ may seem strange given that up to this point idempotents have been written in the form $v_pv_p^*v_q^*v_q$.
	Since $v_pv_p^*$ corresponds to $pP$ and $v_q^*v_q$ to $Pq$, it might seem more natural to send this idempotent to $(pP, Pq)$.
	We switch the order for two reasons. 
	The first is so that the semilattice of principal left ideals is written on the left (and likewise for the right).
	The other is to make things more clear in Example~\ref{ex:freesemigroups}.
\end{rmk}
For $p\in P$, define
\begin{align}
D_p^l &= U(\{Pp\}, \emptyset) \subseteq \widehat{(P_l)}_0,\label{eq:dpl}\\
D_p^r &= U(\{pP\}, \emptyset) \subseteq \widehat{(P_r)}_0.\label{eq:dpr}
\end{align}
For $p\in P$ and any right ideal $X\subseteq P$, the set
\[
p^{-1}X = \{y\in P: py\in X\}.
\]
is also a right ideal.
If $X = qP$ for some $q\in P$, then 
\[
p^{-1}qP = \begin{cases} p_1P &\text{if }pP\cap qP = rP, pp_1 = qq_1 = r\\
\emptyset& \text{if }pP\cap qP = \emptyset\end{cases}.
\]
Similarly, if $Y\subseteq P$ is a left ideal, the set
\[
Yp^{-1} = \{x\in P: xp\in Y\}
\]
is also a left ideal. 
If $Y = Pq$ for some $q\in P$, then 
\[
Pqp^{-1} = \begin{cases} Pp_1 &\text{if }Pp\cap Pq = Pr, p_1p = q_1q = r\\
\emptyset& \text{if }Pp\cap Pq = \emptyset\end{cases}.
\]
We then define, for $p\in P$, the following maps
\begin{align*}
&R_p: D_1^r \to D_p^r,&&L_p: D_p^l\to D_1^l,\\
&R_p(\xi) = p\xi,&&L_p(\xi) = \xi p^{-1}.
\end{align*}
Since every filter contains 1, we have $D_1^l = \widehat{(P_l)}_0$ and $D_1^r= \widehat{(P_r)}_0$.
Then the intrinsic action of $\s_{P}$ on its spectrum, viewed through the homeomorphism given in Lemma~\ref{lem:semilatticeproduct} is given by 
\[
\theta_{[p]}: D_p^l \times D_1^r \to D_1^l \times D_p^r, \hspace{1cm}
\theta_{[p]}(\xi,\eta) = (\xi p^{-1}, p\eta),
\]
which implies $\theta_{[p]^*} = \theta_{[p]}^{-1}(\xi,\eta) = (\xi p, p^{-1}\eta)$.

For general elements $[p,q,r]\in \s_{P}$, since $[p,q,r]= [p][q]^*[r]$, the action is given by
\begin{equation}
\theta_{[p,q,r]}: D_{r}^l \times D_{r_1}^r \to D_{p_1}^l \times D_{p}^r, \hspace{1cm}\theta_{[p,q,r]}(\xi,\eta) = (\xi r^{-1}qp^{-1}, pq^{-1}r\eta),\label{eq:actiononspectrum}
\end{equation}
where $q = p_1p = rr_1$.

\section{The C*-algebras associated to $P$}

\subsection{C*-algebras associated to inverse semigroups}

To an inverse semigroup $S$ one may associate several C*-algebras.
Some are defined in terms of groupoids associated to $S$ and some using representations.
We recall their definitions here.

A {\em representation} of $S$ on a C*-algebra $A$ is a function $\pi:S\to A$ such that $\pi(st) = \pi(s)\pi(t)$ for all $s,t\in S$, $\pi(s^*) = \pi(s)^*$ for all $s\in S$ and $\pi(0) = 0$. 
The {\em universal C*-algebra of S}, denoted $C^*_u(S)$, is the universal C*-algebra for representations of $S$.
This means that there is a representation $\piu: S\to C_u^*(S)$ such that if $\pi:S\to A$ is any other representation, there exists a *-homomorphism $\vp:C^*_u(S)\to A$ such that $\vp\circ \piu = \pi$.
We call $\piu$ the {\em universal representation of $S$.}

There is a map $\Lambda: S \to \B(\ell^2(S))$ defined by
\[
\Lambda(s)\delta_t = \begin{cases}
\delta_{st} & \text{if }s^*st = t\\0&\text{otherwise}
\end{cases},
\]
which can be shown to be a representation of $S$.
The image of $\Lambda$ generates a C*-algebra $C^*_r(S)$, called the {\em reduced} C*-algebra of $S$.

For a semilattice $E$ we say that a set $C\subseteq E$ is a {\em cover} of $e\in E$ if $c\leqslant e$ for all $c\in C$ and for all $f\leqslant e$ there exists $c\in C$ such that $cf\neq 0$.
A representation $\pi$ of a {\em unital} semilattice is {\em tight} if whenever $C$ is a cover of $E$ we have $\bigvee_{c\in C}\pi(c) = \pi(e)$.
If $S$ is an inverse monoid, a unital representation of $S$ is tight if its restriction to $E(S)$ is.
Note that this is not the original definition of tight as given by Exel in \cite{Ex08}, but is equivalent in this setting, see \cite[Proposition~11.8]{Ex08}, \cite[Corollary~2.3]{DM14}, and \cite{Ex19}.

Then the {\em tight C*-algebra of $S$} \cite{Ex08}, denoted $\Ct(S)$, is universal for tight representations of $S$.
That is, there is a tight representation $\pit:S\to \Ct(S)$ and if $\pi:S\to A$ is any other tight representation, there exists a *-homomorphism $\vp:C^*_u(S)\to A$ such that $\vp\circ \pit = \pi$.
We call $\pit$ the {\em universal tight representation of $S$}.

These C*-algebras have realizations as groupoid C*-algebras.
We have that $C^*_u(S)\cong C^*(\g_u(S))$ and $\Ct(S)\cong C^*(\gt(S))$, and under these isomorphisms we have
\[
\piu(s) = 1_{\Theta(s, D_{s^*s})},\hspace{1cm}\pit(s)= 1_{\Theta(s, D_{s^*s}\cap \Et)}.
\]
\subsection{$\cpp$ as a groupoid C*-algebra}

\begin{theo}\label{thm:fulliso}
	Let $P$ be an LCM monoid, let $\s_P$ be as in \eqref{eq:SP}, and recall that $\g_u(\s_P)$ is the universal groupoid of $\s_P$. 
	Then 
	$$\cpp\cong C^*_u(\s_P)\cong C^*(\g_u(\s_P)).$$
\end{theo}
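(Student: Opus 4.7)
The second isomorphism $C^*_u(\s_P)\cong C^*(\g_u(\s_P))$ is standard and was recorded in the discussion immediately preceding the theorem, so the work is to produce an isomorphism $\cpp\cong C^*_u(\s_P)$. The plan is to build $*$-homomorphisms in each direction by appealing to the universal property of each side, and then verify they are mutually inverse on generators.

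For the map $\Phi\colon C^*_u(\s_P)\to\cpp$, I would first define a candidate representation $\pi\colon\s_P\to\cpp$ by
\[
\pi([p,q,r])=S_pS_q^*S_r,\qquad \pi(0)=0,
\]
using the normal form from Proposition~\ref{prop:sLCMform}. Well-definedness must be checked against the equivalence relation in Lemma~\ref{lem:s-elements} / Proposition~\ref{prop:SSPiso}: if $p=au$, $q=vbu$, $r=vc$ for $u,v\in U(P)$, then Lemma~\ref{lem:deltacalcs}\ref{it1:deltacalcs} together with Definition~\ref{def:cstarmonoid}.4 gives $S_uS_u^*=e_{\Delta_u}=e_\Delta=1$ and symmetrically $S_v^*S_v=1$, so $S_{au}S_{vbu}^*S_{vc}=S_aS_u S_u^*S_b^*S_v^*S_vS_c=S_aS_b^*S_c$. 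Preservation of the involution from Proposition~\ref{prop:SSPiso} is immediate, and multiplicativity is proved by repeating verbatim the computation carried out in the proof of Lemma~\ref{lem:C*spanningsets}: those manipulations only use the five relations of Definition~\ref{def:cstarmonoid}, and they show that $S_pS_q^*S_r\cdot S_aS_b^*S_c$ equals precisely the element prescribed by the product formula \eqref{eq:SPproduct}. The universal property of $C^*_u(\s_P)$ then lifts $\pi$ to the desired $\Phi$ with $\Phi(\piu([p,q,r]))=S_pS_q^*S_r$.

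For the reverse map $\Psi\colon\cpp\to C^*_u(\s_P)$, I set $T_p:=\piu([p])$ and, for $Y\in\J(P)$ identified with an idempotent $\id_Y\in E(\s_P)$ in the form $[p,qp,q]$ via Lemma~\ref{lem:idempotentiso}, set $f_Y:=\piu(\id_Y)$. Checking that $(T_p,f_Y)$ satisfies the five relations of Definition~\ref{def:cstarmonoid} amounts to applying $\piu$ to the identities of Lemma~\ref{lem:isg-relations} (together with the observation $\id_\Delta=1_{\s_P}$, $\id_\emptyset=0$, and the semilattice isomorphism from Lemma~\ref{lem:idempotentiso}); since $\piu$ is a representation, these relations transfer directly. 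Universality of $\cpp$ then delivers $\Psi$ with $\Psi(S_p)=T_p$ and $\Psi(e_Y)=f_Y$.

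It remains to check that $\Phi\circ\Psi$ fixes each $S_p$ and each $e_Y$ (and so the identity on $\cpp$), while $\Psi\circ\Phi$ fixes $\piu([p,q,r])$ for every $[p,q,r]\in\s_P$, which generates $C^*_u(\s_P)$; both checks are immediate from the definitions of $\Phi$ and $\Psi$ on generators. The main obstacle is the multiplicativity of $\pi$ on $\s_P$: one must faithfully reproduce the long string of rewrites from Lemma~\ref{lem:C*spanningsets} and match the output against \eqref{eq:SPproduct}, taking care with the several intersection cases ($raP\cap qP$ and $Pra\cap Pb$) that govern when the product is nonzero. Everything else is a routine transfer between the relations in $\cpp$ and the inverse-semigroup identities of Section~2.3.
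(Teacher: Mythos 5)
Your proposal is correct and follows essentially the same route as the paper: both directions are obtained from the respective universal properties, with $\Psi(S_p)=\piu([p])$, $\Psi(e_{\Delta_p\cap\Delta^q})=\piu([p,qp,q])$ in one direction and $[p,q,r]\mapsto S_pS_q^*S_r$ in the other, the latter's multiplicativity being delegated to the same computations underlying Lemma~\ref{lem:C*spanningsets} and Proposition~\ref{prop:sLCMform}. Your explicit well-definedness check via $S_uS_u^*=e_{\Delta_u}=e_\Delta=1$ for $u\in U(P)$ is a welcome detail that the paper leaves to the reader.
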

\begin{proof}
	As mentioned above, $C^*_u(\s_P)\cong C^*(\g_u(\s_P))$ is established in \cite{Ex08}. 
	We will obtain the first isomorphism using the universal properties of the algebras. 
	For $p,q\in P$ and $\Delta_p\cap \Delta^q\in \J(P)$ let 
	\[
	T_p = \piu([p]),\hspace{1cm}E_{\Delta_p\cap \Delta^q} = \piu([p,qp,q]),\hspace{0.5cm} E_\emptyset = 0.
	\] 
	We first notice that the latter is well-defined, since $\Delta_p = \Delta_a$ and $\Delta^q = \Delta^b$ if and only if $pP = aP$ and $Pq = Pb$, which implies $[p,qp,q] = [a,ba,b]$.
	We claim that these elements satisfy Definition~\ref{def:cstarmonoid}.
	That each $T_p$ is a partial isometry, each $E_Y$ is a projection, and that \ref{it1:universal_def} and \ref{it2:universal_def} in Definition~\ref{def:cstarmonoid} are satisfied is clear.
	Noticing that $\Delta = \Delta_1\cap \Delta^1$ shows that $E_\Delta = 1$, so we have \ref{it3:universal_def}.
	
	To show \ref{it4:universal_def}, we take $p,q,r\in P$. 
	If $Pr\cap Pq = Pk$ with $r_1r = q_1q = k$, then by Lemma~\ref{lem:constructibleLCM} we have
	\begin{align*}
	T_rE_{\Delta_p\cap\Delta^q}T_r^*&=\piu([r][p,qp,q][r]^*)\\
	&= \piu([r][p][p]^*[q]^*[q][r]^*[r][r]^*)\\
	&= \piu([r][p][p]^*[k]^*[k][r]^*)\\
	&= \piu([r][p][p]^*[r]^*[r_1]^*[r_1][r][r]^*) \\
	&= \piu([r][p][p]^*[r]^*[r_1]^*[r_1])\\
	&= \piu([rp,r_1rp,r_1])\\
	&= E_{\Delta_{rp}\cap \Delta^{r_1}}\\
	&= E_{(\Delta_p\cap\Delta^q)_r}.	
	\end{align*}
	The calculation for \ref{it5:universal_def} is similar.
	Hence by the universal property of $\cpp$ there exists a $*$-homomorphism $\Psi: \cpp \to C^*_u(\s_P)$ such that $\Psi(S_p) = T_p$ and $\Psi(e_Y) = E_Y$ for all $p\in P$ and $Y\in \J(P)$. 
	
	For the other direction, we claim that the map $\pi: \s_{P}\to \cpp$ given by
	\[
	\pi([p,q,r]) = S_pS_q^*S_r\hspace{1cm}\pi(0) = 0
	\]
	is a representation of $\s_{P}$.
	It is straightforward to check that $\pi$ is well-defined.
	Looking at Definition~\ref{def:cstarmonoid} and Proposition~\ref{prop:sLCMform} shows that the elements of $\{S_pS_q^*S_r:p,q,r\in P, q\in Pp\cap rP\}$ multiply in the same way as the elements of $\s_{P}$.
	The same arguments as in their proofs show that $\pi$ is a representation.
	Hence by the universal property there exists a $*$-homomorphism $\Phi:C^*_u(\s_{P})\to \cpp$ such that $\Phi(T_p) = S_p$ and $\Phi(E_Y) = e_Y$ for all $p\in P$ and $Y\in \J(P)$.
	Hence, $\Phi\circ\Psi = \id_{\cpp}$ and $\Psi\circ\Phi = \id_{C^*_u(\s_{P})}$ implying that $\Psi$ and $\Phi$ are isomorphisms. 
\end{proof}

\subsection{$\ctspp$ as a reduced groupoid C*-algebra}
\begin{lem}\label{lem:independent}
	Let $P$ be an LCM monoid and let $p,q, p_1, q_i\in P$ for $i = 1, \dots n$. 
	If $\Delta_p\cap \Delta^q = \cup_{i=1}^n\Delta_{p_i}\cap\Delta^{q_i}$, then there exists $i\in\{1,\dots n\}$ such that $\Delta_p = \Delta_{p_i}$ and $\Delta^q = \Delta^{q_i}$.
\end{lem}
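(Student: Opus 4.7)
My plan is to single out a specific ``diagonal'' element of $\Delta_p \cap \Delta^q$, locate it in one of the pieces of the decomposition, and then use the resulting divisibility relations to squeeze out equality of the sets.

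First, I would recall from \eqref{eq:DeltapDeltaq} the explicit parametrization
\[
\Delta_p \cap \Delta^q = \{(cqpx,\, px) : c, x \in P\},
\]
so that in the monoid case this set always contains the element $\gamma := (qp, p)$ (take $c = x = 1$). Applying this to each index $i$ shows that $\Delta_{p_i} \cap \Delta^{q_i}$ likewise contains $\eta_i := (q_i p_i, p_i)$. These are the two elements that will drive the argument.

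Next, since $\gamma$ lies in the union, it belongs to some $\Delta_{p_i} \cap \Delta^{q_i}$. Writing $\gamma = (c_i q_i p_i x_i,\, p_i x_i)$ and reading off the two coordinates gives $p = p_i x_i$ and, after right-cancelling $p$ from $qp = c_i q_i p = c_i q_i p_i x_i = c_i q_i p$, also $q = c_i q_i$. Thus $pP \subseteq p_i P$ and $Pq \subseteq Pq_i$, which by Lemma~\ref{lem:deltacalcs} (points 2 and 3) yields the inclusions $\Delta_p \subseteq \Delta_{p_i}$ and $\Delta^q \subseteq \Delta^{q_i}$, and hence $\Delta_p \cap \Delta^q \subseteq \Delta_{p_i} \cap \Delta^{q_i}$. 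The reverse inclusion is immediate because $\Delta_{p_i} \cap \Delta^{q_i}$ is one of the terms in the union, which equals $\Delta_p \cap \Delta^q$. So $\Delta_p \cap \Delta^q = \Delta_{p_i} \cap \Delta^{q_i}$ for this particular $i$.

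Finally, to upgrade this to $\Delta_p = \Delta_{p_i}$ and $\Delta^q = \Delta^{q_i}$ individually, I would feed $\eta_i \in \Delta_{p_i} \cap \Delta^{q_i} = \Delta_p \cap \Delta^q$ back through the parametrization of $\Delta_p \cap \Delta^q$: writing $\eta_i = (c q p y, p y)$ and comparing coordinates gives $p_i = p y$ and $q_i = c q$ (again by right-cancelling $p_i$), so $p_i P \subseteq pP$ and $Pq_i \subseteq Pq$. Combining with the inclusions from the previous step yields $pP = p_i P$ and $Pq = Pq_i$, which by Lemma~\ref{lem:deltacalcs} gives exactly $\Delta_p = \Delta_{p_i}$ and $\Delta^q = \Delta^{q_i}$.

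The argument is largely bookkeeping; the only real choice is picking the right ``witness'' elements $\gamma$ and $\eta_i$. The slightly subtle point — and what I would consider the main (mild) obstacle — is the second step: one has to notice that membership of $\eta_i$ in the whole intersection, combined with the formula, produces the reverse divisibility relations $p_i \in pP$ and $q_i \in Pq$. Without this second invocation of the parametrization one only obtains $\Delta_p \cap \Delta^q = \Delta_{p_i} \cap \Delta^{q_i}$, not the separate equalities of the defining sets, so this step cannot be skipped.
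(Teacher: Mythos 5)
Your proof is correct and follows essentially the same route as the paper: it uses the parametrization $\Delta_p\cap\Delta^q=\{(cqpx,px):c,x\in P\}$ and the witness $(qp,p)$ to locate an index $i$ with $pP\subseteq p_iP$ and $Pq\subseteq Pq_i$. Your final step---feeding $(q_ip_i,p_i)$ back through the parametrization to obtain the reverse containments $p_iP\subseteq pP$ and $Pq_i\subseteq Pq$---is a worthwhile addition, since the paper's own proof stops at the equality of the intersections $\Delta_p\cap\Delta^q=\Delta_{p_i}\cap\Delta^{q_i}$ and leaves the separate equalities $\Delta_p=\Delta_{p_i}$, $\Delta^q=\Delta^{q_i}$ implicit.
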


In the words of \cite[Definition~2.26]{Li12}, $\J(P)$ is {\em independent}.
\begin{proof}
	We have that $(qp,p)\in \Delta_p\cap \Delta^q$, so $(qp,q)\in \Delta_{p_i}\cap \Delta^{q_i}$ for some $i\in\{1,\dots,n\}$.
	Since $(qp,p)\in \Delta_{p_i}\cap \Delta^{q_i}$, it must have the form $(bq_ip_ix, p_ix)$ for some $b,x\in P$, see \eqref{eq:DeltapDeltaq}.
	Thus $p = p_ix$ and $q = bq_i$, which implies $pP\subseteq p_iP$ and $Pq \subseteq Pq_{i}$.
	Lemma~\ref{lem:constructibleLCM} and its proof then imply that $\Delta_p\cap\Delta^q\subseteq  \Delta_{p_i}\cap \Delta^{q_i}$, and since the other containment is assumed we have equality.	 
\end{proof}
\begin{theo}\label{thm:reducediso}
	Let $P$ be an LCM monoid and let $\s_P$ be as in \eqref{eq:SP}. 
	Then $\ctspp\cong C^*_r(\s_{P})\cong C^*_r(\g_u(\s_{P}))$
\end{theo}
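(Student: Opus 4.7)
The plan is to prove the two isomorphisms separately. The second isomorphism $C^*_r(\s_P) \cong C^*_r(\g_u(\s_P))$ should be a standard consequence of the independence of $\J(P)$ (Lemma~\ref{lem:independent}): for an inverse semigroup whose idempotent semilattice is independent, the reduced C*-algebra is naturally identified with the reduced C*-algebra of the universal groupoid of germs, as in~\cite{Ex08} and references therein. The main content is therefore the first isomorphism $\ctspp \cong C^*_r(\g_u(\s_P))$, and the strategy is to realize the defining representation of $\ctspp$ on $\ell^2(\Delta)$ as a direct sum of left regular representations of $\g_u(\s_P)$ and then to show this direct sum is faithful on $C^*_r(\g_u(\s_P))$.

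First I would decompose $\ell^2(\Delta) = \bigoplus_{a \in P}\ell^2(\Delta_a)$ where $\Delta_a = \{(a, y) : y \in I_a\}$. Each $\Delta_a$ is invariant under every $J_p$ (these operators preserve the first coordinate), so the representation $\pi_{\text{ts}}:[p,q,r] \mapsto J_pJ_q^*J_r$ of $\s_P$ decomposes as $\pi_{\text{ts}} = \bigoplus_a \pi^a$ with $\pi^a$ acting on $\ell^2(\Delta_a)$. Using the partial-bijection interpretation from Section~3.3 (the maps $v_p$ of~\eqref{eq:s}), a direct check shows each $\Delta_a$ is a single $\s_P$-orbit, since any $(a,y) \in \Delta_a$ is taken to $(a, 1)$ by $v_y^*$ and back up to any $(a, y') \in \Delta_a$ by $v_{y'}$.

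Next, for each $a \in P$ I would choose the principal filter $\xi_a \in \widehat{E(\s_P)}_0$ corresponding under Lemma~\ref{lem:idempotentiso} to the principal filter at $(Pa, P)$ in $P_l \times_0 P_r$; equivalently, $\xi_a$ is the filter of all idempotents in $\s_P$ whose support in $\Delta$ contains $(a, 1)$. The plan is to identify the source fiber $\g_u(\s_P)_{\xi_a}$ with $\Delta_a$ via $[s, \xi_a] \mapsto s\cdot(a,1)$, where the right-hand side is the action of $s \in \s_P$ on $\Delta$ as a partial bijection. Well-definedness is immediate from the germ relation and surjectivity follows from the orbit analysis above; the crucial point is injectivity, which uses that $\s_P$ is $E^*$-unitary (Lemma~\ref{lem:E*unitary}) so that any two germs sending $(a,1)$ to the same point must differ by an idempotent in $\xi_a$. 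Under this identification $\pi^a$ becomes the left regular representation $\lambda_{\xi_a}$ of $\g_u(\s_P)$ at $\xi_a$, so $\pi_{\text{ts}} \cong \bigoplus_a \lambda_{\xi_a}$, and this induces a surjection $\Phi: C^*_r(\g_u(\s_P)) \to \ctspp$.

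The hardest step will be showing $\Phi$ is injective, i.e.\ that $\bigoplus_a \lambda_{\xi_a}$ is already faithful on $C^*_r(\g_u(\s_P))$. The principal filters $\xi_a$ are dense in $\widehat{E(\s_P)}_0$ (principal filters are always dense in the spectrum of a semilattice), each $\g_u(\s_P)$-orbit in the unit space meets the set $\{\xi_a\}$, and the $E^*$-unitary property guarantees that $\g_u(\s_P)$ is Hausdorff. Combining these should give faithfulness of the direct sum, but making the density-to-faithfulness passage rigorous is the main obstacle, since meeting every orbit is not by itself automatically enough to force faithfulness of a sum of regular representations.
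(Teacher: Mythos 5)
Your route is genuinely different from the paper's, and the surjection $C^*_r(\g_u(\s_P))\to\ctspp$ via the orbit decomposition $\ell^2(\Delta)=\bigoplus_a\ell^2(\Delta_a)$ is plausible, but the faithfulness step---which you yourself flag as unresolved---is a real gap, and the two facts you invoke there are both false. Under Lemma~\ref{lem:idempotentiso} and Lemma~\ref{lem:semilatticeproduct}, each $\xi_a$ corresponds to the pair $(\uparrow\! Pa,\uparrow\! P)$, so the whole family $\{\xi_a\}_{a\in P}$ sits inside the slice $\widehat{(P_l)}_0\times\{\uparrow\! P\}$; whenever $P\neq U(P)$ this is a proper subset of $\widehat{E(\s_P)}_0\cong\widehat{(P_l)}_0\times\widehat{(P_r)}_0$ with dense complement in the second factor's direction, so $\{\xi_a\}$ is \emph{not} dense (the set of \emph{all} principal filters is dense, but you are only using those whose $P_r$-component is trivial). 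Likewise, not every orbit meets $\{\xi_a\}$: for $P=X^*$ the orbit of an ultrafilter pair coming from a bi-infinite word consists entirely of non-principal filters. What \emph{is} true, and what you would need, is that the union of the orbits of the $\xi_a$ is exactly the set of pairs of principal filters, which is dense; combined with $\|\lambda_x(f)\|=\|\lambda_{\xi_a}(f)\|$ for $x$ in the orbit of $\xi_a$, Hausdorffness of $\g_u(\s_P)$ (from Lemma~\ref{lem:E*unitary}), and lower semicontinuity of $x\mapsto\|\lambda_x(f)\|$ for $f\in C_c(\g_u(\s_P))$, this gives $\sup_a\|\lambda_{\xi_a}(f)\|=\|f\|_r$. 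None of that is in your write-up, and you explicitly say you do not see how to make the passage rigorous, so as it stands the hardest half of the theorem is missing.

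For comparison, the paper avoids the orbit/density analysis entirely: it builds a single isometry $T:\ell^2(\Delta)\to\ell^2(\s_P)$, $T\delta_x^{bx}=\delta_{[x,bx,bx]}$, checks that compression by $T$ carries $\Lambda([p,q,r])$ to $J_pJ_q^*J_r$ (giving the surjection $h:C^*_r(\s_P)\to\ctspp$ directly, without decomposing into orbits), and proves injectivity by intertwining the faithful canonical expectation $E_\Delta$ on $\B(\ell^2(\Delta))$ with the conditional expectation of $C^*_r(\s_P)$ onto its diagonal, which exists because $\s_P$ is $E^*$-unitary; independence of $\J(P)$ (Lemma~\ref{lem:independent}) enters only to show $h$ is injective on the image of that expectation. (Note also that independence is not what gives the second isomorphism $C^*_r(\s_P)\cong C^*_r(\g_u(\s_P))$---that is standard for any countable inverse semigroup.) If you want to keep your direct-sum picture, the most economical fix is to borrow this idea: show $\bigoplus_a\lambda_{\xi_a}$ intertwines the expectations and is injective on the diagonal, rather than trying to prove density of $\{\xi_a\}$, which is simply not available.
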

\begin{proof}
	Define an operator $T:\ell^2(\Delta)\to \ell^2(\s_{P})$ by
	\[
	T(\delta_x^{bx}) = \delta_{[x,bx,bx]}.
	\]
	It is straightforward to check that its adjoint is given by
	\[
	T^*(\delta_{[p,q,r]}) = \begin{cases}
	\delta_{pu}^{qu}&qu = r \text{ for some } u\in U(P)\\
	0&\text{otherwise}
	\end{cases}
	\]
	and that $T^*T = \id_{\ell^2(\Delta)}$, so that $T$ is an isometry.
	Now define $h:\B(\ell^2(\s_{P}))\to \B(\ell^2(\Delta))$ by $h(a) = T^*aT$.
	If we have $p,q,r\in P$ with $q\in Pp\cap rP$, then
	\begin{align*}
	h(\Lambda([p,q,r]))\delta_x^{bx}&=T^*\Lambda([p])\Lambda([q]^*)\Lambda([r])T\delta_x^{bx}\\
	&= T^*\Lambda([p])\Lambda([q]^*)\Lambda([r])\delta_{[x,bx,bx]}\\
	&= \begin{cases}T^*\Lambda([p])\Lambda([q]^*)\delta_{[rx,bx,bx]} &b\in Pr\\
	0&\text{otherwise}
	\end{cases}\\
	&= \begin{cases}T^*\Lambda([p])\delta_{[q_1,bx,bx]} &b\in Pr\text{ and }rx = qq_1\\
	0&\text{otherwise}
	\end{cases}\\
	&= \begin{cases}T^*\delta_{[pq_1,bx,bx]} &b\in Pr, rx = qq_1, \text{ and }bx\in Ppq_1\\
	0&\text{otherwise}
	\end{cases}\\
	&= \begin{cases}\delta_{pq_1}^{bx} &b\in Pr, rx = qq_1, \text{ and }bx\in Ppq_1\\
	0&\text{otherwise}
	\end{cases}\\
	&= J_pJ_q^*J_r\delta_x^{bx}.
	\end{align*}
	Hence restricted to the dense $*$-subalgebra generated by $\Lambda(\s_{P})$, $h$ is multiplicative and preserves adjoints, so is a $*$-homomorphism there. 
	As defined $h$ is continuous, and its image is a dense subalgebra of $\ctspp$, so $h$ extends to a $*$-homomorphism $h: C^*_r(\s_{P})\to \ctspp$. 
	This $*$-homomorphism must be surjective since $h(C^*_r(\s_{P}))$ is a C*-algebra, hence closed, and contains a dense subalgebra of $\ctspp$.
	
	To show injectivity, we use conditional expectations. 
	Let $E_\Delta:\B(\ell^2(\Delta))\to \ell^\infty(\Delta)$ be the canonical faithful conditional expectation determined by the formula $\left\langle E_\Delta(a) \delta^{bx}_x, \delta^{bx}_x\right\rangle = \left\langle a(\delta^{bx}_x), \delta^{bx}_x\right\rangle$.
	Here we are identifying $\ell^\infty(\Delta)$ with the subalgebra of $\B(\ell^2(\Delta))$ of operators determined by pointwise multiplication by bounded functions.
	We claim that 
	\[
	E_\Delta(J_pJ_q^*J_r) = \begin{cases}J_pJ_q^*J_r &q = rp\\0&\text{otherwise}.\end{cases}
	\]
	Indeed, from the definition of $E_\Delta$, we see that $E_\Delta(J_pJ_q^*J_r)$ will be zero unless $J_pJ_q^*J_r$ fixes some $\delta_x^{bx}$.
	This occurs when $x = pq_1$, where $rx = qq_1$.
	But then $qq_1 = rx = rpq_1$ which implies $q = rp$. 
	To finish the claim then we should show that if $q = rp$ then $E_\Delta(J_pJ_{rp}^*J_r) = J_pJ_{rp}^*J_r$, but this is immediate.
	
	Since $\s_{P}$ is E*-unitary, there is also a conditional expectation on $C_r^*(\s_{P})$ onto the commutative C*-algebra $D(\s_{P})$ generated by $\Lambda(E(\s_{P}))$ \cite[Proposition~3.7]{No14}. 
	It is given on generators by 
	$$E(\Lambda(s)) = \begin{cases}\Lambda(s)& s\in E(\s_{P})\\
	0&\text{otherwise}\end{cases}. $$
	A short calculation shows that $h\circ E = E_\Delta\circ h$. 
	
	Finally, if $h(a) = 0$, then $h(a^*a) = 0$, and so $E_\Delta(h(a^*a)) = 0$.
	Thus $h(E(a^*a)) = 0$, but \cite[Proposition~3.5]{No14} and Lemma~\ref{lem:independent} combine to show that $h$ is injective on the image of $E$, hence $E(a^*a) = 0$. 
	Since $E$ is faithful, $a=0$ so $h$ is injective. 
	This establishes the first isomorphism.
	
	The second isomorphism is standard, see \cite{Pa99} and \cite{No14}.
\end{proof}
\subsection{The boundary quotient}

The results of \cite{StLCM} suggest that the natural boundary quotient for $\cpp$ should be the tight C*-algebra of $\s_P$. 
Hence, we take this to be the {\em definition} of the boundary quotient.

\begin{defn}
	Let $P$ be an LCM monoid, and let $\s_{P}$ be as in \eqref{eq:SP}.
	We define the {\em boundary quotient} of $\cpp$, denoted $\qpp$, to be the tight C*-algebra of $\s_{P}$,
	\[
	\qpp:= \Ct(\s_{P}).
	\]
\end{defn}
Here we always have a conditional expectation on to the diagonal subalgebra.
\begin{prop}
	Let $P$ be an LCM monoid. Then the map $\varphi: \qpp\to \qpp$ defined on generators of $\qpp$ by
	\[
	\varphi(\pi_t([p,q,r])) = \begin{cases}\pi_t([p,rp,r]) &\text{if }q = rp\\ 0&\text{otherwise}\end{cases}
	\]
	extends to a conditional expectation onto the subalgebra of $\qpp$ generated by $\pi_t(E(\s_{P}))$.
\end{prop}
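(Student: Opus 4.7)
The plan is to realize $\varphi$ as the canonical restriction-to-unit-space conditional expectation on the tight groupoid model of $\qpp$. By definition $\qpp = \Ct(\s_P)$, and the groupoid realization recalled in the preliminaries gives $\qpp \cong C^*(\gt(\s_P))$ via $\pi_t(s) \mapsto 1_{\Theta(s,\, D_{s^*s}\cap\Et)}$; under this isomorphism the $C^*$-subalgebra generated by $\pi_t(E(\s_P))$ corresponds to $C_0(\gt(\s_P)^{(0)}) = C_0(\Et)$, since the idempotents map to characteristic functions of compact-open subsets of the unit space.

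My first step is to check that $\gt(\s_P)$ is Hausdorff. The relevant criterion on $\s_P$ is that no non-idempotent element dominates a nonzero idempotent, and this is a direct consequence of Lemma~\ref{lem:E*unitary}: if $e \in E(\s_P)^\times$ satisfies $e \leqslant s$, then $se = e$, forcing $s \in E(\s_P)$ by $E^*$-unitarity. With Hausdorffness established, the standard fact that for a Hausdorff \'etale groupoid $G$ the restriction $f \mapsto f|_{G^{(0)}}$ extends from $C_c(G)$ to a faithful conditional expectation $C^*(G)\to C_0(G^{(0)})$ yields the desired $\varphi : \qpp \to C_0(\Et)$.

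It remains to verify the formula on generators. The image of $\pi_t([p,q,r])$ under restriction is the indicator of $\{\xi \in D_{[p,q,r]^*[p,q,r]}\cap\Et : [[p,q,r],\xi]\in \gt(\s_P)^{(0)}\}$. A germ $[s,\xi]$ lies in $\gt(\s_P)^{(0)}$ iff $[s,\xi] = [s^*s,\xi]$, iff there exists $f\in\xi$ with $sf = s^*sf$; replacing $f$ by $f \cdot s^*s \in \xi$ we may assume $f\leqslant s^*s$, in which case $s^*sf = f$ and hence $sf = f$. Applying $E^*$-unitarity (with this $f$ as $e$) forces $s \in E(\s_P)$, and the converse is immediate since $\theta_e$ is the identity on $D_e$ for any idempotent $e$. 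By Proposition~\ref{prop:SSPiso}, $[p,q,r]$ is idempotent iff it has the form $[p,rp,r]$, i.e., iff $q = rp$; when this holds the restriction equals $\pi_t([p,rp,r])$, and otherwise it vanishes, matching the stated formula.

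The main obstacle is the Hausdorffness step, which requires a careful appeal to (or reproof of) the criterion linking $E^*$-unitarity of $\s_P$ to Hausdorffness of $\gt(\s_P)$; a reference such as the discussion following \cite[Proposition~12.1]{Ex08} suffices. Once Hausdorffness is in hand, everything else is a routine computation with characteristic functions of open bisections.
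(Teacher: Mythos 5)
Your proposal is correct and follows essentially the same route as the paper: Hausdorffness of $\gt(\s_P)$ via Lemma~\ref{lem:E*unitary}, the canonical restriction-to-unit-space conditional expectation for second countable Hausdorff \'etale groupoids, and the identification of $\varphi$ with this restriction on generators. The only difference is that you spell out the generator computation (invoking $E^*$-unitarity a second time to see that a germ $[s,\xi]$ can be a unit only when $s$ itself is idempotent), a verification the paper leaves implicit.
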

\begin{proof}
	By Lemma~\ref{lem:E*unitary}, the tight groupoid is Hausdorff. 
	Since $\gt(\s_{P})$ is second countable and \'etale, we know from \cite{R80} that there is a conditional expectation from $\Ct(\s_{P})$ to $C(\gt(\s_{P})^{(0)})$ which is given on $C_c(\gt(\s_{P}))$ by function restriction, $f\mapsto \left. f\right|_{\gt(\s_{P}^{(0)}}$.
	On the generators (which are elements of $C_c(\gt(\s_{P}))$), the given map $\varphi$ is exactly restriction to $\gt(\s_{P})^{(0)}= \Et(\s_{P})$, which is the C*-algebra generated by $\pi_t(E(\s_{P}))$.
\end{proof}

\begin{prop}
	Let $P$ be an LCM monoid, and suppose that $P$ embeds into an amenable group $G$. 
	Then $\cpp$ and $\qpp$ can be realized as partial crossed products of commutative C*-algebras by $G$, and hence are nuclear.
\end{prop}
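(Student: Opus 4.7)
The plan is to use the groupoid realizations $\cpp \cong C^*(\g_u(\s_P))$ and $\qpp \cong C^*(\gt(\s_P))$ established in Theorem~\ref{thm:fulliso} and the discussion preceding the proposition, and then exhibit each groupoid of germs as a transformation groupoid for a partial action of $G$. The starting point is to define a grading $\sigma: \s_P \to G$ by
\[
\sigma([p,q,r]) = pq^{-1}r, \qquad \sigma(0) = \text{undefined},
\]
using the embedding $P\hookrightarrow G$. Well-definedness on equivalence classes follows directly from \eqref{eq:equivdef}, since the conjugating factors $u,v\in U(P)$ cancel in $G$. A short computation using the product formula \eqref{eq:SPproduct} (together with $qq_1 = raa_1$ and $r_1ra = b_1b$) shows that $\sigma([p,q,r][a,b,c]) = (pq^{-1}r)(ab^{-1}c)$ whenever the left-hand product is nonzero, so $\sigma$ is a partial homomorphism to $G$.

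Next I would verify that $\sigma$ is idempotent pure. Indeed, $\sigma[p,qp,q] = p(qp)^{-1}q = 1_G$ for every idempotent; conversely, if $\sigma[p,q,r] = 1_G$ and $q = p_1 p = rr_1$, then $pq^{-1}r = p_1^{-1}r$, so $r = p_1$ in $G$ and hence in $P$, forcing $[p,q,r] = [p,p_1p,p_1]\in E(\s_P)$. Combined with the $E^*$-unitarity proved in Lemma~\ref{lem:E*unitary}, this is precisely the hypothesis needed to invoke the standard machinery (see e.g. Milan--Steinberg's work on inverse semigroup C*-algebras and partial crossed products, or Exel's treatment in the book \emph{Partial Dynamical Systems, Fell Bundles and Applications}) that identifies $\g_u(\s_P)$ with a transformation groupoid $G\ltimes \Ef(\s_P)$ for a canonical partial action of $G$ on the spectrum, and $\gt(\s_P)$ with the restriction $G\ltimes \Et(\s_P)$. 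The partial action is the one that on each piece agrees with the spectrum action $\theta_{[p]}$ of \eqref{eq:actiononspectrum}.

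Passing to C*-algebras, these groupoid isomorphisms give
\[
\cpp \cong C_0(\Ef(\s_P))\rtimes^{p} G, \qquad \qpp \cong C_0(\Et(\s_P))\rtimes^{p} G,
\]
where $\rtimes^{p}$ denotes the full partial crossed product. The nuclearity claim then follows because amenability of $G$ implies that the associated partial action is amenable (full and reduced partial crossed products agree, and the underlying groupoid is amenable); hence the crossed products are nuclear by the standard Exel/McClanahan results on partial crossed products by amenable groups.

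The main obstacle is packaging the reference to the Milan--Steinberg / Exel identification precisely enough to conclude in the \emph{full} (not reduced) setting: this is where one uses both $E^*$-unitarity (so that the grading descends faithfully to germs) and amenability of $G$ (to identify full with reduced groupoid C*-algebras and to deduce nuclearity). Everything else is routine once the partial homomorphism $\sigma$ is in hand.
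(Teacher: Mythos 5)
Your proposal is correct and follows essentially the same route as the paper: the paper also defines the map $[p,q,r]\mapsto pq^{-1}r$ on $\s_P^\times$, verifies it is an idempotent pure prehomomorphism using \eqref{eq:SPproduct}, and then invokes the Milan--Steinberg/Li machinery to realize $\cpp$ and $\qpp$ as partial crossed products, deducing nuclearity from amenability of $G$. The only cosmetic difference is that the paper detects idempotents via $q=rp$ rather than $r=p_1$, which is the same condition.
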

\begin{proof}
	Let $\s_P$ be as in \eqref{eq:SP} and define
	\[
	\psi: \s_P^\times \to G, \hspace{1cm}
	\psi([p,q,r]) = pq^{-1}r.
	\]
	It is straightforward to check that $\psi$ is well-defined. 
	Suppose that we have $p,q,r, a,b,c\in P$ such that $[p,q,r][a,b,c]\neq 0$. 
	Then by \eqref{eq:SPproduct} there exist $k, a_1, q_1, l, r_1, b_1\in P$ such that $raP\cap qP = kP, Pra\cap Pb = Pl$, and 
	\begin{align}raa_1 &= qq_1 = k,\label{eq:cal1}\\
	r_1ra &= b_1b = l,\label{eq:cal2}
	\end{align} and $[p,q,r][a,b,c] =[pq_1, r_1raa_1, b_1c]$.
	Hence
	\begin{align*}
	\psi([p,q,r][a,b,c]) &= \psi[pq_1, r_1raa_1, b_1c]\\
	&= pq_1(r_1raa_1)^{-1}b_1c\\
	&= pq_1a_1^{-1}a^{-1}r^{-1}r_1^{-1}b_1c\\
	&= p(raa_1q^{-1})^{-1}r_1^{-1}b_1c\\
	&= pq^{-1}r_1^{-1}b_1c &\text{since }raa_1q^{-1} = q\text{ by }\eqref{eq:cal1}\\
	&= pq^{-1}rab^{-1}c &\text{since }r_1^{-1}b = rab^{-1} \text{ by }\eqref{eq:cal2}\\
	&=\psi[p,q,r]\psi[a,b,c]
	\end{align*}
	So $\psi$ is multiplicative away from zero.
	Furthermore, if  $\psi[p,q,r] = 1_G$, we have
	$q^{-1} = p^{-1}r^{-1}$ which implies $q = rp$, and so $[p,q,r]$ is an idempotent. 
	Hence $\psi$ is what is usually termed an {\em idempotent pure prehomomorphism} of the inverse semigroup $\s_P$, and so by \cite[Corollary~3.4]{Li17} (see also \cite{MS14}) both $\cpp = C^*(\s_P)$ and $\qpp = \Ct(\s_P)$ can be expressed as partial crossed products of commutative C*-algebras by $G$. 
	Since $G$ is amenable, the conclusion follows from \cite[Corollary~3.4]{Li17} (see also \cite{ExBook}).
\end{proof}

\section{Examples}\label{sec:examples}

\subsection{Free Semigroups}\label{ex:freesemigroups}

We retain notation from Example \ref{ex:FreeSemigroups} above. Let $X$ be a finite set and let $X^*$ be the free semigroup over $X$. We show that the boundary quotient $\mathcal{Q}_{\text{ts}}(X^*)$ is isomorphic to the crossed product associated to the two-sided full shift over $X$.

For $x\in X^*\cup X^\NN$ and $m,n\in \NN$ with $m<n$, define
\begin{align*}
x_{[m,n]} &:= x_mx_{m+1}\cdots x_n \label{eq:nprefixdef},\\
x_{[n]} &:= x_{[1,n]}.\nonumber
\end{align*}
For $\alpha\in X^*$, we also let
\[
\overleftarrow{\alpha} := \alpha_{|\alpha|}\alpha_{|\alpha|-1}\cdots\alpha_2\alpha_1.
\]
If $x\in X^\NN$, the set
\[
\xi_x = \{x_{[n]}X^*: n\in \NN \} \cup\{X^*\}
\]
is an ultrafilter in the semilattice $X^*_r$ of principal right ideals. Likewise,
\[
\eta_x = \{X^*\overleftarrow{x_{[n]}} : n\in \NN\} \cup\{X^*\}
\]
is an ultrafilter in the semilattice $X^*_l$ of principal left ideals. Furthermore, the map $x\mapsto \xi_x$ (resp. $x\mapsto \eta_x$) is a homeomorphism from $X^\NN$ onto $\Eu(X^*_r) = \Et(X^*_r)$ (resp. onto $\Eu(X^*_l) = \Et(X^*_l)$).
Referring to \eqref{eq:dpl} and \eqref{eq:dpr}, we have
\[
D_\alpha^l = \{\eta_{\overleftarrow{\alpha}x} : x\in X^\NN\}, \hspace{1cm}D_\alpha^r = \{\xi_{\alpha x} : x\in X^\NN\}.
\]
If $\alpha\in X^*$, $x\in \Et(X_r^*)$, and $y\in \Et(X_l^*)$ then 
\[
\alpha \xi_x = \xi_{\alpha x},\hspace{1cm}\eta_y \overleftarrow{\alpha} = \eta_{\alpha y}.
\]
We view $X^\NN \times X^\NN$ as the Cantor space of bi-infinite sequences in $X$; and so for $x, y\in X^\NN$ we use the identification
\begin{equation}\label{eq:twosidedsequences}
(x,y) = \dots x_3x_2x_1.y_1y_2y_3\dots 
\end{equation}
where we are dropping the 0th entry for convenience. 
For $\alpha, \beta\in X^*$, let
\begin{equation*}\label{eq:Calphabeta}
C(\alpha, \beta) = \{(\alpha x, \beta y): x,y\in X^\NN\}.
\end{equation*}
Sets of this form generate the product topology on $X^\NN\times X^\NN$, and they are clopen in this topology.

In identifying $\Et(X^*_l)\times \Et(X_r^*)$ with $X^\NN\times X^\NN$, we get an action of $\s_{X^*}$ on $X^\NN\times X^\NN$. 
Since $X^*$ has no invertible elements, a given $[\alpha,\beta,\gamma]\in \s_{X^*}$ is a one-element equivalence class. 
For such an element, we have that $\beta = \alpha_1\alpha = \gamma\gamma_1$ for some $\alpha_1, \gamma_1\in X^*$.
Then referring to \eqref{eq:actiononspectrum} the action of $\s_{X^*}$ on $X^\NN\times X^\NN$ is given by 
\[
\theta_{[\alpha,\beta,\gamma]}: C(\overleftarrow{\gamma}, \gamma_1) \to C(\overleftarrow{\alpha_1},\alpha), \hspace{1cm}\theta_{[\alpha,\beta,\gamma]}(\overleftarrow{\gamma}x, \gamma_1 y) = (\overleftarrow{\alpha_1} x, \alpha y).
\]
When viewed with the identification given in \eqref{eq:twosidedsequences} the map is given by
\[
\theta_{[\alpha,\beta,\gamma]}(\dots x_2x_1 \overbrace{\gamma . \gamma_1}^\beta y_1y_2\dots) = \dots x_2x_1 \overbrace{\alpha_1 . \alpha}^\beta y_1y_2\dots.
\] 
In words, an element $[\alpha,\beta,\gamma]$ being in $\s_{X^*}$ indicates that $\gamma$ is a prefix of $\beta$ and $\alpha$ is a suffix of $\beta$.
Then $\theta_{[\alpha,\beta,\gamma]}$ acts on two-sided infinite sequences which have the word $\beta$ at the origin situated so that the prefix $\gamma$ is to the left of the origin. 
The map $\theta_{[\alpha,\beta,\gamma]}$ then shifts this sequence so that the suffix $\alpha$ is to the right of the origin.

\begin{lem}\label{lem:cocycle} The map $h: \s_{X^*}^\times\to \ZZ$ given by 
	\[
	h[\alpha,\beta,\gamma] = |\beta|-|\alpha|-|\gamma|
	\]
	is an idempotent-pure prehomomorphism. 
\end{lem}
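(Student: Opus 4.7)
The plan is to verify the three properties of the map: (i) that $h$ is well-defined on equivalence classes, (ii) that $h$ is multiplicative on non-zero products (the prehomomorphism condition), and (iii) that $h^{-1}(0) \subseteq E(\s_{X^*})$ (the idempotent-pure condition). Well-definedness is trivial: since $U(X^*) = \{\varnothing\}$, each class $[\alpha,\beta,\gamma]$ consists of a single triple, so $h$ depends only on word lengths of a well-defined representative.

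For the prehomomorphism property, I would take two elements $[\alpha,\beta,\gamma]$ and $[a,b,c]$ whose product is non-zero and apply the product formula \eqref{eq:SPproduct}. So there exist words $k, \ell, a_1, q_1, \gamma_1', b_1$ with $\gamma a P \cap \beta P = kP$, $P\gamma a \cap Pb = P\ell$, $\gamma a a_1 = \beta q_1 = k$ and $\gamma_1' \gamma a = b_1 b = \ell$, giving
\[
[\alpha,\beta,\gamma][a,b,c] = [\alpha q_1, \gamma_1' \gamma a a_1, b_1 c].
\]
Taking $h$ of this product yields $(|\gamma_1'|+|\gamma|+|a|+|a_1|) - (|\alpha|+|q_1|) - (|b_1|+|c|)$. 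The two length identities $|\gamma|+|a|+|a_1| = |\beta|+|q_1|$ and $|\gamma_1'|+|\gamma|+|a| = |b_1|+|b|$ (coming directly from $\gamma a a_1 = \beta q_1$ and $\gamma_1' \gamma a = b_1 b$) allow me to eliminate $|a_1|-|q_1|$ and $|\gamma_1'|-|b_1|$, and after cancellation the sum collapses to $(|\beta|-|\alpha|-|\gamma|) + (|b|-|a|-|c|) = h[\alpha,\beta,\gamma] + h[a,b,c]$.

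For idempotent-purity, suppose $h[\alpha,\beta,\gamma] = 0$, i.e.\ $|\beta| = |\alpha|+|\gamma|$. By definition of $\s_{X^*}^\times$ we have $\beta = \alpha_1 \alpha = \gamma \gamma_1$ for some $\alpha_1,\gamma_1 \in X^*$. Comparing lengths forces $|\alpha_1| = |\gamma|$ and $|\gamma_1| = |\alpha|$. Since $\alpha_1$ and $\gamma$ are both length-$|\gamma|$ prefixes of the same word $\beta$ in the free monoid, they coincide: $\alpha_1 = \gamma$. Then $\gamma \alpha = \gamma \gamma_1$ and left cancellation yield $\alpha = \gamma_1$, so $\beta = \gamma \alpha$. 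Hence $[\alpha,\beta,\gamma] = [\alpha, \gamma \alpha, \gamma]$, which lies in $E(\s_{X^*})$ by the description of idempotents in Proposition~\ref{prop:SSPiso}.

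I do not anticipate a genuine obstacle here; the only slightly delicate step is the bookkeeping through the product formula \eqref{eq:SPproduct}, which is purely additive once the two length identities are extracted from the LCM factorizations. The idempotent-pure step relies crucially on the prefix-uniqueness property of the free monoid, which is what prevents longer equivalence classes and makes the argument short.
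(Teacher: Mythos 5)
Your proposal is correct and follows essentially the same route as the paper: the multiplicativity is verified by the same length bookkeeping through the product formula \eqref{eq:SPproduct}, and the idempotent-pure step is the paper's observation that $|\beta|=|\alpha|+|\gamma|$ together with $\beta\in X^*\alpha\cap\gamma X^*$ forces $\beta=\gamma\alpha$, which you simply spell out via prefix uniqueness in the free monoid. No gaps.
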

\begin{proof}
	Let $p,q,r,a,b,c\in X^*$ and suppose that $[p,q,r][a,b,c]\neq 0$. 
	Then \eqref{eq:SPproduct} implies there exist $a_1, q_1, r_1, b_1\in X^*$ such that $raa_1 = qq_1$ and $r_1ra=b_1b$ and $[p,q,r][a,b,c] = [pq_1, r_1raa_1, b_1c]$.
	Then since $|r_1ra| - |b_1| = |b|$, $|a_1| = |qq_1| - |r|-|a|$, and $|qq_1| - |q_1| = |q|$ we have
	\begin{align*}
	h([p,q,r][a,b,c]) &= |r_1raa_1| - |pq_1| - |b_1c|\\
	&= |r_1ra| + |a_1|- |pq_1| - |b_1| - |c|\\
	&= |b| + |a_1| - |pq_1| - |c|\\
	&= |b| + |qq_1| - |r| -|a| - |p| - |q_1| - |c|\\
	&= |b| + |q| - |r| - |p| -|c| - |a|\\
	&=h[p,q,r]+ h[a,b,c].	
	\end{align*} 
	Furthermore, if $h[p,q,r] = 0$ we have that $|q| = |p| + |r|$ and together with the fact that $q\in X^*p\cap rX^*$ we have that $q = rp$ so that $[p,q,r]$ is an idempotent.
\end{proof}

The left shift map $\sigma: X^\NN\to X^\NN$ is the the homeomorphism given by
\begin{equation*}
\sigma(x,y) = (y_1x, y_2y_3\cdots) = \dots x_3x_2x_1y_1.y_2y_3\dots .
\end{equation*}
Lemma~\ref{lem:cocycle} and the discussion before it show that 
\begin{equation*}
\theta_{s}(x,y) = \sigma^{h(s)}(x,y), \hspace{1cm}s\in \s_{X^*}^\times.
\end{equation*}
Let $\g^\sigma$ be the transformation groupoid associated to the $\ZZ$ action on $X^\NN \times X^\NN$, so that
\begin{equation}
\g^\sigma = \{(n, (x,y)) : n\in \ZZ, x,y\in X^\NN\}.
\end{equation}

\begin{theo}\label{thm:crossedproductfullshift}
	Let $X$ be a finite set and let $X^*$ be the free monoid on $X$. 
	Then the tight groupoid associated to $\s_{X^*}$ is isomorphic to $\g^\sigma$. 
	In particular, $$\mathcal{Q}_{\text{ts}}(X^*)\cong C(X^\NN\times X^\NN)\rtimes_\sigma \ZZ.$$	
\end{theo}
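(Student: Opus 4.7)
The plan is to exhibit the tight groupoid $\gt(\s_{X^*})$ as isomorphic to the transformation groupoid $\g^\sigma$, and then invoke standard results identifying $\Ct(\s_{X^*})\cong C^*(\gt(\s_{X^*}))$ (from the preceding subsection) together with the well-known identification of the $C^*$-algebra of a transformation groupoid with the crossed product $C(X^\NN\times X^\NN)\rtimes_\sigma\ZZ$.

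First I would identify the unit space. By Lemma~\ref{lem:idempotentiso}, $E(\s_{X^*})\cong (X^*)_l\times_0 (X^*)_r$, so Lemma~\ref{lem:semilatticeproduct} gives $\widehat{E(\s_{X^*})}_{\text{tight}} \cong \widehat{(X^*)_l}_{\text{tight}}\times \widehat{(X^*)_r}_{\text{tight}}$. The discussion immediately preceding the theorem identifies each factor with $X^\NN$ via the ultrafilters $\xi_x$ and $\eta_y$, giving $\gt(\s_{X^*})^{(0)}\cong X^\NN\times X^\NN$. Under this identification and \eqref{eq:actiononspectrum}, the action reduces to a shift: by Lemma~\ref{lem:cocycle} and \eqref{eq:SactsbyShift}, each nonzero $s\in\s_{X^*}$ acts on its domain by $\sigma^{h(s)}$.

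Next I would define a map $\Phi:\gt(\s_{X^*})\to \g^\sigma$ by $\Phi([s,(x,y)])=(h(s),(x,y))$. For well-definedness, suppose $[s,(x,y)]=[t,(x,y)]$; then there is an idempotent $e\in E(\s_{X^*})^\times$ with $(x,y)\in D_e$ and $se=te$. Multiplying on the right by $(se)^*$, we obtain $s t^* (tet^*) = tet^*\in E(\s_{X^*})^\times$ which, combined with $E^*$-unitarity (Lemma~\ref{lem:E*unitary}) and the fact that $h$ is idempotent-pure (Lemma~\ref{lem:cocycle}), forces $h(s)=h(t)$; so $\Phi$ is well-defined. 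Conversely, if $h(s)=h(t)$ and $(x,y)\in D_{s^*s}\cap D_{t^*t}$, then $s$ and $t$ act identically on a neighborhood of $(x,y)$, so the germs coincide. For surjectivity, given $(n,(x,y))\in \g^\sigma$, one can explicitly exhibit an element $s=[\alpha,\beta,\gamma]$ with $h(s)=n$ whose domain contains $(x,y)$: choose a long enough window around the origin of the bi-infinite sequence $(x,y)$, read off $\alpha$, $\beta$, $\gamma$ from that window shifted so that $|\beta|-|\alpha|-|\gamma|=n$. That $\Phi$ is a groupoid homomorphism follows immediately from $h$ being a prehomomorphism and the formulas for product, inverse, range, and source in both groupoids.

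For the topology, I would check that $\Phi$ sends the basic open set $\Theta(s,U(\Delta_p\cap\Delta^q,\emptyset)\cap\Et)$ to the open bisection $\{n\}\times V$ of $\g^\sigma$ where $n=h(s)$ and $V$ is the image under the spectrum identification of the cylinder; these sets generate the topology on both sides, so $\Phi$ is a homeomorphism. The main obstacle I anticipate is the well-definedness and injectivity argument—confirming that the germ equivalence for $\s_{X^*}$ matches the equivalence $(s,(x,y))\sim (t,(x,y))\iff h(s)=h(t)$ and the maps agree locally—which crucially needs the $E^*$-unitarity and idempotent-purity established in Lemma~\ref{lem:E*unitary} and Lemma~\ref{lem:cocycle}. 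Once the groupoid isomorphism is in hand, the $C^*$-algebra statement follows from $\qpp=\Ct(\s_{X^*})\cong C^*(\gt(\s_{X^*}))\cong C^*(\g^\sigma)\cong C(X^\NN\times X^\NN)\rtimes_\sigma\ZZ$, where the middle isomorphism uses Hausdorffness of the tight groupoid (from Lemma~\ref{lem:E*unitary}) and amenability of $\ZZ$.
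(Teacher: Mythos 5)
Your proposal is correct and follows essentially the same route as the paper: the same map $[s,(x,y)]\mapsto (h(s),(x,y))$, with well-definedness and injectivity resting on the idempotent-purity of the prehomomorphism $h$ from Lemma~\ref{lem:cocycle}, explicit elements for surjectivity, and basic open bisections for the homeomorphism. The only spot to tighten is injectivity: ``$s$ and $t$ act identically on a neighbourhood, so the germs coincide'' is not by itself a valid inference in a groupoid of germs --- one must argue, as you anticipate, that $st^*\neq 0$ and $h(st^*)=0$ force $st^*$ to be an idempotent, which then yields an idempotent $e=t^*ts^*s$ with $se=te$.
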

\begin{proof}
	Define $\Phi: \gt(\s_{X^*})\to \g^\sigma$ by
	\[
	\Phi([s, (x,y)]) = (h(s), (x,y)).
	\]
	We first show $\Phi$ is well-defined.
	Suppose that $[s, (x,y)] = [t, (x,y)]$ which means there is an idempotent $e$ such that $se = te$. 
	Since $h(e) = 0$ for every idempotent $e$ we have
	\[
	h(s) = h(s) + h(e) = h(se) = h(te) = h(t),
	\]
	which implies $\Phi([s,(x,y)]) = \Phi([t,(x,y)])$.
	
	Given $[t,\theta_s(x,y)],[s,(x,y)]\in \gt(\s_{X^*})$ we have
	\begin{align*}
	\Phi([t,\theta_s(x,y)][s,(x,y)]) &= \Phi([ts, (x,y)]) \\
	&=(h(ts), (x,y))\\
	&=(h(t)+ h(s), (x,y))\\
	&=(h(t), \sigma^{h(s)}(x,y))(h(s), (x,y))\\
	&=(h(t), \theta_s(x,y))(h(s), (x,y))\\
	&=\Phi([t,\theta_s(x,y)])\Phi([s,(x,y)]),\\
	\Phi([s,(x,y)]^{-1}) &= \Phi([s^*, \theta_s(x,y)])\\
	&= (h(s^*), \sigma^{h(s)}(x,y))\\
	&= (-h(s), \sigma^{h(s)}(x,y))\\
	&= (h(s), (x,y))^{-1}\\
	&= \Phi([s,(x,y)])^{-1},
	\end{align*}
	which shows that $\Phi$ is a groupoid homomorphism. 
	
	To show that $\Phi$ is injective, we suppose that $\Phi([s,(x,y)]) = \Phi([t,(z,w)])$, which implies $(x,y) = (z,w)$ and $h(s) = h(t)$.
	Since the domains of $\theta_s$ and $\theta_t$ contain a common ultrafilter, this implies $s^*st^*t\neq 0$ and so $st^*$ and $ts^*$ are both nonzero. 
	But then by Lemma~\ref{lem:cocycle} we have $h(st^*) = h(s) - h(t) = 0$ which implies $st^*$ is an idempotent (and is hence equal to its adjoint $ts^*$).
	We then have
	\[
	st^*t s^*s = ts^*ts^*s = ts^*s = tt^*ts^*s = ts^*st^*t
	\] 
	so taking $e = t^*ts^*s = s^*st^*t$ in the transformation groupoid definition gives $[s,(x,y)] = [t,(x,y)]$.
	
	To show that $\Phi$ is surjective, let $g = (n, (x,y))\in \g^\sigma$. 
	If $n = 0$, then $\Phi(1, (x,y)) = g$.
	If $n>0$, then $\Phi([\epsilon, x_{[n]}, \epsilon], (x,y)) = (|x_{[n]})|, (x,y)) = g$.
	If $n<0$, then $\Phi([x_{[n]},x_{[n]},x_{[n]}], (x,y)) = (-|x_{[n]})|, (x,y)) = g$.
	Hence $\Phi$ is surjective.
	
	Finally, if $\Theta(s,U)$ is a basic open set in $\gt(\s_{X^*})$, we have $\Phi(\Theta(s,U)) = \{h(s)\}\times U$ which is clearly open in $\g^\sigma$, so that $\Phi$ is an open map.
	On the other hand if $U\subseteq X^\NN\times X^\NN$ is open and $n\in\ZZ$, we have
	\[
	\Phi^{-1}(\{n\}\times U) = \bigcup_{s\in h^{-1}(n)} D_s\cap U
	\]
	which is open. 
	Hence $\Phi$ is a homeomorphism and we are done.	
\end{proof}
\begin{rmk}
	The existence of an idempotent-pure prehomomorphism into $\ZZ$ implies that $\gt(\s_{X^*})$ can be expressed as a partial action groupoid $\ZZ\ltimes \Et(\s_{X^*})$, see \cite[Corollary~3.4]{Li17} and \cite{MS14}.
	In this case the action ends up being a full action, because the domains of the elements of $h^{-1}(n)$ have union equal to the whole of $\Et(\s_{X^*})$.
\end{rmk}

\begin{rmk}
	Recall from \cite[Section~8.2]{Li13} that the boundary quotient $\Q(X^*)$ of Li's $C^*(X^*)$ is canonically isomorphic to $\mathcal{O}_{|X|}$, which is purely infinite and simple. 
	In contrast, our construction applied to the free semigroup gives something much different---the crossed product $C(X^\NN\times X^\NN)\rtimes \ZZ$ is far from simple (as the full shift has many periodic points and is hence not minimal).
	In addition, the full shift has many invariant measures which in turn gives $C(X^\NN\times X^\NN)\rtimes \ZZ$ many traces, making it stably finite.
\end{rmk}

\subsection{Self-similar actions}
To a self-similar action $(G,X)$ as defined in Example~\ref{ex:ssg}, Nekrashevych associated a C*-algebra $\ogx$ universal for a set of isometries $\{s_x:x\in X\}$ and a set of unitaries $\{u_g: g\in G\}$ satisfying
\begin{enumerate}
	\item[(SS1)] $\sum_{x\in X}s_xs_x^* = 1$ and $s_x^*s_y = 0 $ for $x\neq y$,
	\item[(SS2)] $u_gu_h = u_{gh}$ for all $g,h\in G$,
	\item[(SS3)] $u_g^* = u_g^{-1}$ for all $g\in G$,
	\item[(SS4)] $u_gs_x = s_{g\cdot x} u_{\left.g\right|_x}$ for all $g\in G$, $x\in X$.
\end{enumerate}

Let $(G,X)$ be a pseudo-free self-similar action.
To make what follows more readable, we will write
\[
P:= X^*\bowtie G.
\]
By Lemma~\ref{lem:SSGLCM}, $P$ is an LCM monoid. 
In what follows, we also assume that $(X,G)$ is recurrent.
Although this is not needed to make $P$ an LCM monoid, it does seem to be satisfied by many important examples.
The group of invertible elements is $U(P) = \{(\epsilon, g): g\in G\}$ and readily identified with $G$.

By Lemma~\ref{lem:recurrent} we have that $P_l$ is linearly ordered by inclusion; this has some important consequences for the tight C*-algebra.
Firstly, its space of ultrafilters is a singleton, so the space of ultrafilters of $E(\s_{P})$ can be identified with $\widehat{(P_r)}_{\text{tight}} \cong X^\NN$. 
Secondly, given two nonempty elements of $P_l$, one is {\em dense} in the other (recall that $e$ is dense in $f$ if $e\leqslant f$ and $g\leqslant f$ implies $ge\neq 0$.)
This means that $[1,p,p]$ is dense in $[1,1,1]$ for all $p\in P$ and so by \cite[Proposition 2.10]{Ex09}, 
\[
\pi[1,p,p] = \pi(1)\text{ for any tight representation }\pi:\s_{P}\to A\text{ in a C*-algebra }A.
\]
So the tight C*-algebra of $\s_{P}$ does not see its action on (space of ultrafilters of) the left ideals, leaving only its action on the (space of ultrafilters of) the right ideals.
It is this action which gives Nekrashevych's $\ogx$. 
Evidence is mounting that $\Ct(\s_{P})\cong \ogx$, and this indeed ends up being the case. 
In the remainder of this paper we prove this isomorphism by showing that the underlying groupoids are isomorphic, although it is possible to prove it using the universal properties of the algebras\footnote{We took this approach in an earlier preprint version of this work; see \href{https://arxiv.org/abs/2001.00156}{2001.00156} v3 on the arXiv.}. 

\begin{lem}\label{lem:ssgIdem}
	Let $(G,X)$ be a pseudo-free and recurrent self-similar action, let $P = X^*\bowtie G$ and let $\s_{P}$ be as in \ref{eq:sLCMform}.
	Then 
	\[E(\s_{P}) = \{[(\alpha,1_G), (\alpha,1_G), 1][1, (\beta,1_G), (\beta,1_G)]: \alpha, \beta\in X^*\}\cup\{0\}.
	\]
	Furthermore, we have that 
	\begin{equation}\label{eq:ssg_left_idempotent}
	[(\alpha,1_G), (\alpha,1_G), 1] = [(\delta,1_G), (\delta,1_G), 1]\iff \alpha = \delta,
	\end{equation}
	\begin{equation}\label{eq:ssg_right_idempotent}
	[1, (\beta,1_G), (\beta,1_G)] = [1, (\gamma,1_G), (\gamma,1_G)] \iff |\beta|=|\gamma|.
	\end{equation}
\end{lem}
\begin{proof}
	For the first statement, it is enough to show that each idempotent can be written in the given form, since each given element is clearly an idempotent. Suppose $$[(\alpha,g), (\beta, h)(\alpha,g), (\beta,h)] = [(\alpha,g), (\alpha, g), 1][1, (\beta,h),(\beta,h)] \in E(\s_{P}).$$
	Since $(G,X)$ is recurrent we can find $k\in G$ such that $\left. k\right|_\beta = h^{-1}$.
	Since $U(X^*\bowtie G) = \{\epsilon\}\times G$ we have
	\[
	[(\alpha,g), (\alpha, g), 1] = [(\alpha,g)(\epsilon,g^{-1}), (\alpha, g)(\epsilon, g^{-1}), 1] = [(\alpha, 1_G), (\alpha, 1_G), 1)],
	\]
	\[
	[1, (\beta,h),(\beta,h)] = [1, (\epsilon, k)(\beta,h),(\epsilon, k)(\beta,h)] = [1, (k\cdot\beta, 1_G), (k\cdot\beta, 1_G)].
	\]
	For \eqref{eq:ssg_left_idempotent}, if $[(\alpha,1_G), (\alpha,1_G), 1] = [(\delta,1_G), (\delta,1_G), 1]$ then in particular we can find $g\in G$ such that $(\alpha,g) = (\delta, 1_G)$, and so $\alpha = \delta$. The other implication is clear.
	
	For \eqref{eq:ssg_right_idempotent}, if $[1, (\beta,1_G), (\beta,1_G)] = [1, (\gamma,1_G), (\gamma,1_G)]$, then we can find $g,h\in G$ with $(\epsilon, g) = 1$, $(h\cdot\beta, \left.h\right|_\beta g) = (\gamma,1_G)$ and $(h\cdot\beta, \left.h\right|_\beta) = (\gamma,1_G)$; these equations imply $|\beta| = |\gamma|$. On the other hand, if $|\beta| = |\gamma|$ we use the fact that $(G, X)$ is recurrent to find $h\in G$ such that $h\cdot\beta = \gamma$ and $\left.h\right|_\beta = 1_G$ so that $[1, (\beta,1_G), (\beta,1_G)] = [1, (\epsilon, h)(\beta,1_G), (\epsilon, h)(\beta,1_G)] = [1, (\gamma,1_G), (\gamma,1_G)]$. 
\end{proof}
In light of the above, we will write
\begin{align}
A_\alpha &:= [(\alpha, 1_G), (\alpha, 1_G), 1]&\alpha\in X^*,\nonumber\\
B_n &:= [1, (\beta,1_G), (\beta, 1_G)]&\beta\in X^n\label{eq:Bndef},
\end{align}
so that $E(\s_{P}) = \{A_\alpha B_n: \alpha\in X^*, n\geq 0\}\cup\{0\}$. Note also that this means $[(\beta, g)]^*[(\beta, g)] = B_{|\beta|}$ for all $g\in G$, and that $B_nB_m = B_{\max\{m,n\}}$ for all $m,n\geq 0$. 
\begin{lem}\label{lem:ssgsimplification}
	Let $(G,X)$ be a pseudo-free and recurrent self-similar action, let $P = X^*\bowtie G$, and let $\s_{P}$ be as in \eqref{eq:SP}.
	Then 
	\[
	\s_{P} = \{[(\alpha,g)][(\gamma,1_G)]^*B_n: \alpha,\gamma\in X^*, g\in G, n\geq 0\}\cup\{0\}.
	\]
	Furthermore, $[(\alpha,g)][(\gamma,1_G)]^*B_n = [(\delta,h)][(\sigma,1_G)]^*B_m$ if and only if $\alpha = \delta$, $g=h$, $\gamma = \sigma$ and $n = m$. 
\end{lem}
\begin{proof}
	We first show that every element can be written in the form 
	\begin{equation}\label{eq:initialForm}
	[(\alpha, g), (\beta\gamma, 1_G), (\beta, 1_G)]\text{ for some }\alpha,\beta,\gamma\in X^*, |\beta\gamma|\geq|\alpha|.
	\end{equation} 
	Note that $|\beta\gamma|\geq |\alpha|$ is equivalent to saying $(\beta\gamma, 1_G) \in P(\alpha, g)$ by Lemma~\ref{lem:SSGLCM}.
	
	Take $[(\alpha, g), (\beta, h),(\gamma, k)]\in \s_{P}$. 
	Taking $u = (\epsilon, h^{-1})$ and $v = 1$ in \eqref{eq:equivdef} and renaming variables shows we can assume, without loss of generality, that $h = 1_G$. 
	We can also assume that $\gamma$ is a prefix of $\beta$, since $(\beta, 1_G)P\subseteq (\gamma,k)P$.
	Hence up to renaming variables, our generic element of $\s_{P}$ can be taken in the form $[(\alpha, g), (\beta\gamma, 1_G), (\beta, k)]$.
	Since $(G,X)$ is recurrent, there exists $a\in G$ such that $\left.a\right|_\beta = k^{-1}$.
	Then taking $u = (\epsilon, (\left.a\right|_{\beta\gamma})^{-1})$ and $v = (\epsilon,a)$ in \eqref{eq:equivdef} gives that
	\begin{align*}
	[(\alpha, g), (\beta\gamma, 1_G), (\beta, k)] &= [(\alpha, g)u,v(\beta\gamma, 1_G)u, v(\beta, k)]\\
	&= [(\alpha, g(\left.a\right|_{\beta\gamma})^{-1}), (a\cdot(\beta\gamma), 1_G), (a\cdot\beta, 1_G)],
	\end{align*}
	which is of the form \eqref{eq:initialForm} since $a\cdot \beta$ is a prefix of $a\cdot(\beta\gamma)$.
	
	Now suppose we have an element in the form \eqref{eq:initialForm}. We have two possibilities.	If $|\gamma|\geq |\alpha|$, then $[(\alpha, g), (\gamma, 1_G), 1]\in \s_P$ and so we have 
	\begin{align*}
	[(\alpha, g)][(\gamma, 1_G)]^*B_{|\beta|} &= [(\alpha, g), (\gamma, 1_G), 1][1, (\beta, 1_G), (\beta, 1_G)]\\ &= [(\alpha, g), (\beta\gamma, 1_G), (\beta, 1_G)]
	\end{align*}
	with $|\beta\gamma|\geq |\gamma|\geq |\alpha|$. 
	
	On the other hand, if $|\gamma|<|\alpha|$ we cannot proceed as above because the element $[(\alpha, g), (\gamma, 1_G), 1]$ is not in $\s_P$. However, one can check using \eqref{eq:SPproduct} that
	\begin{equation*}
	[(\alpha, g)][(\gamma, 1_G)]^* = [(\alpha, g), (\alpha, g), (\alpha, g)][1,(\gamma, 1_G), 1] = [(\alpha, g), (\alpha, g), (\xi, k)],
	\end{equation*}
	where $\xi$ is the prefix of $\alpha$ of length $|\alpha|-|\gamma|$ and $k\in G$ satisfies $\left.k\right|_\gamma = 1_G$ and $k\cdot\gamma$ is the suffix of $\alpha$ of length $|\gamma|$ (we can always find such a $k$ because $(G, X)$ is recurrent). Note that $|\beta\gamma|\geq |\alpha|$ and $|\xi|+|\gamma| = |\alpha|$ implies $|\beta|\geq |\xi|$. Thus we can compute 
	\begin{align*}
	[(\alpha, g), (\alpha, g), (\gamma, k)][1, (\beta, 1_G), (\beta, 1_G)] &= [(\alpha, g), (\beta, 1_G)(\gamma, 1_G), (\beta, 1_G)]\\ &= [(\alpha, g), (\beta\gamma, 1_G), (\beta, 1_G)]
	\end{align*}
	using \eqref{eq:SPproduct} with $p = q = (\alpha, g)$, $r_1r = b = c = (\beta, 1_G)$ and $a = q_1 = b_1 = 1$. So in either case we have written a generic element of $\s_P$ in the claimed form.
	
	To prove the second statement, if we have that $[(\alpha,g), (\beta\gamma,1_G), (\beta, 1_G)] = [(\delta,h), (\sigma\tau,1_G), (\sigma\tau, 1_G)]$, then there exists $s, t\in G$ such that 
	\[(\alpha, g) = (\delta, h)(\epsilon, s), \hspace{0.2cm}(\beta\gamma,1_G) = (\epsilon, t)(\sigma\tau,1_G)(\epsilon, s), \hspace{0.2cm} (\beta, 1_G)= (\epsilon, t)(\sigma, 1_G).
	\]
	The last equation implies $t\cdot\sigma = \beta$ (establishing $|\sigma| = |\beta|$) and $\left.t\right|_\sigma = 1_G$. Together with the second equation this implies $\tau = \gamma$ and $s = 1_G$, and hence by the first equation we have $\alpha = \delta$ and $g = h$. 
	
	On the other hand, since $(G, X)$ is recurrent if $|\sigma| = |\beta|$ we can find $t\in G$ such that $t\cdot \sigma = \beta$ and $\left.t\right|_\sigma = 1_G$, and so the previous calculation shows $[(\alpha,g), (\beta\gamma,1_G), (\beta, 1_G)] = [(\delta,h), (\sigma\tau,1_G), (\sigma\tau, 1_G)]$.
\end{proof}

Since the set $\{B_n: n\geq 0\}$ is linearly ordered, it admits only one ultrafilter. Hence, referring to \eqref{eq:actiononspectrum} and its section, we can identify the space of ultrafilters of $E(\s_P)$ with $X^\NN$. The action of $\s_P$ viewed through this identification is then given by 
\[
\theta_{[(\alpha, g)]}: X^\NN \to C(\alpha), \hspace{1cm}\theta_{[(\alpha,g)]} (x) = \alpha(g\cdot x).
\]
with $B_n = \Id_{X^\NN}$ for all $n$. More generally, we have
\[
\theta_{[(\alpha,g)][(\gamma,1_G)]^*B_n}: C(\gamma) \to C(\alpha), \hspace{1cm}\theta_{[(\alpha,g)][(\gamma,1_G)]^*B_n}(\gamma x) = \alpha(g\cdot x).
\]
It was proven in \cite{EP17} that $\ogx$ arises from the tight groupoid of another inverse semigroup associated to $(G, X)$. We show that the two groupoids are isomorphic, which will then imply $\Ct(\s_P)\cong \ogx$. 

The inverse semigroup defined in \cite{EP17} is given by 
\begin{equation}\label{eq:sgxdef}
\sgx = \{(\alpha, g, \beta):\alpha, \beta\in X^*, g\in G\}\cup\{0\}
\end{equation}
with inverse given by $(\alpha, g, \beta)^* = (\beta, g^{-1}, \alpha)$ and product given by 
\begin{equation}\label{eq:sgxproduct}
(\alpha, g, \gamma)(\delta, k, \sigma) = \begin{cases}
(\alpha(g\cdot \gamma_1), \left.g\right|_{\gamma_1}, \sigma)& \delta = \gamma\gamma_1\\
(\alpha, g\left(\left.k^{-1}\right|_{\delta_1}\right)^{-1}, \sigma(k^{-1}\cdot\delta_1)) & \gamma = \delta\delta_1\\
0 & \delta X^*\cap \gamma X^* = \emptyset
\end{cases},
\end{equation}
and all products involving 0 equal to 0. 

\begin{lem}\label{lem:semigrouphomomorphism}
	Let $(G,X)$ be a pseudo-free and recurrent self-similar action, let $P = X^*\bowtie G$, let $\s_{P}$ be as in \eqref{eq:SP}, and let $\sgx$ be as in \eqref{eq:sgxdef}. Then the map $\vp: \s_P\to \sgx$ defined by
	\[
	\vp([(\alpha,g)][(\gamma,1_G)]^*B_n) = (\alpha, g, \beta), \hspace{1cm} \vp(0) = 0
	\]
	is an inverse semigroup homomorphism.
\end{lem}
\begin{proof}
	Lemma~\ref{lem:ssgsimplification} shows that $\vp$ is well-defined. First we claim that
	\begin{equation}\label{eq:movingidempotentsforward}
	B_n[(\alpha, g)]= [(\alpha, g)]B_{n+ |\alpha|}, \hspace{1cm}(\alpha, g)\in P, n\geq 0.
	\end{equation}
	Picking some $\beta\in X^n$ we have
	\begin{align*}
	B_n[(\alpha, g)] & = [1, (\beta, 1_G), (\beta, 1_G)][(\alpha, g), (\alpha, g), (\alpha,g)]\\
	&=[(\alpha,g), (\beta\alpha, g), (\beta\alpha,g)] \\
	& = [(\alpha, g)][(\beta\alpha, g)]^*[(\beta\alpha, g)]\\
	&=  [(\alpha, g)]B_{n+|\alpha|}
	\end{align*}
	with the second line obtained from \eqref{eq:SPproduct} with $p , a_1, r_1 = 1, a,b,c, q_1 = (\alpha, g)$, and $q = r = b_1 = (\beta, 1_G)$. Since we always have $[(\alpha,g)] =  [(\alpha,g)]B_{|\alpha|}$, this implies that
	\begin{equation*}
	[(\alpha,g)]B_m = B_{\max\{0, m - |\alpha|\}}[(\alpha,g)], \hspace{1cm}(\alpha, g)\in P, m\geq 0.
	\end{equation*}
	In particular,
	\begin{align}\label{eq:movingidempotentsbackward}
	B_m	[(\alpha,g)]^* &= ([(\alpha,g)]B_m)^* = (B_{\max\{0, m - |\alpha|\}}[(\alpha,g)])^*\\ &= [(\alpha,g)]^*B_{\max\{0, m - |\alpha|\}}.\nonumber
	\end{align}
	Now we show
	\begin{equation} \label{eq:inside_of_sgx_product}
	[(\gamma, 1_G)]^*[(\delta, 1_G)] = \begin{cases}
	[(\gamma_1, 1_G)]B_{|\delta|}& \delta = \gamma\gamma_1\\
	[(\delta_1, 1_G)]^*B_{|\delta|}& \gamma = \delta\delta_1\\
	0 &\text{otherwise}
	\end{cases},\hspace{0.2cm}\gamma,\delta\in X^*, g\in G.
	\end{equation}
	If $\delta = \gamma\gamma_1$ we have
	\begin{align*}
	[(\gamma, 1_G)]^*[(\delta, 1_G)] &= [(\gamma, 1_G)]^*[(\gamma, 1_G)][(\gamma_1, 1_G)] = B_{|\gamma|}[(\gamma_1, 1_G)]\\& = [(\gamma_1, 1_G)]B_{|\gamma| + |\gamma_1|} = [(\gamma_1, 1_G)]B_{|\delta|}
	\end{align*}
	by \eqref{eq:movingidempotentsforward}. If $\gamma = \delta\delta_1$ then
	\[
	[(\gamma, 1_G)]^*[(\delta, 1_G)] = [(\delta_1, 1_G)]^*[(\delta, 1_G)]^*[(\delta, 1_G)] = [(\delta_1, 1_G)]^*B_{|\delta|}
	\]
	by \eqref{eq:movingidempotentsforward}. If neither of the above are true, then $A_\gamma A_\delta = 0$ which implies $[(\gamma, 1_G)]^*[(\delta, 1_G)] = [(\gamma, 1_G)]^*A_\gamma A_\delta[(\delta, 1_G)] = 0$. This establishes \eqref{eq:inside_of_sgx_product}.
	
	We now give a formula for the product of two arbitrary elements. For $\alpha, \gamma, \delta, \sigma\in X^*$, $g, k\in G$  and $m,n\geq 0$, let $M = \max\{n + |\delta|-|\sigma|, m\}$, and label our two arbitrary elements
	$$C = [(\alpha,g)][(\gamma, 1_G)]^*B_n, \hspace{0.5cm}D = [(\delta,k)][(\sigma, 1_G)]^*B_m.$$ Then we have
	\begin{equation}\label{eq:SPssgproduct}
	CD = \begin{cases}
	[(\alpha(g\cdot\gamma_1), \left.g\right|_{\gamma_1})][(\sigma, 1_G)]^*B_M&\delta = \gamma\gamma_1\\
	[(\alpha,g\left(\left.k^{-1}\right|_{\delta_1}\right)^{-1})][(\sigma(k^{-1}\cdot\delta_1), 1_G)]^*B_M&\gamma = \delta\delta_1\\
	0 &\text{otherwise}	
	\end{cases}.
	\end{equation}
	We show the case $\gamma = \delta\delta_1$; the other case is similar but more straightforward. Note that $B_n[(\delta,k)][(\sigma, 1_G)]^*B_m =[(\delta,k)][(\sigma, 1_G)]^*B_{\max\{n+|\delta|-|\sigma|, 0\}}B_m$, so this is equal to $[(\sigma, 1_G)]^*B_M$. Thus
	\begin{align*}
	CD&= [(\alpha,g)][(\gamma, 1_G)]^*[(\delta,k)][(\sigma, 1_G)]^*B_M\\
	&= [(\alpha,g)][(\delta_1, 1_G)]^*B_{|\delta|}[(\epsilon, k)][(\sigma, 1_G)]^*B_M&\text{by }\eqref{eq:inside_of_sgx_product}\\
	& = [(\alpha,g)][(\delta_1, k)]^*[(\epsilon, k^{-1})]^*[(\sigma, 1_G)]^*B_{|\delta| - |\sigma|}B_M&\text{by} \eqref{eq:movingidempotentsforward}\eqref{eq:movingidempotentsbackward}\\
	& = [(\alpha,g)][(\sigma, 1_G)(\epsilon, k^{-1})(\delta_1, 1_G)]^*B_M\\
	&= [(\alpha,g)][(\sigma k^{-1}\cdot\delta_1,\left.k^{-1}\right|_{\delta_1})]^*B_M\\
	& = [(\alpha,g)][(\epsilon,\left.k^{-1}\right|_{\delta_1})]^*[(\sigma k^{-1}\cdot\delta_1,1_G)]^*B_M\\
	& = [(\alpha,g)][(\epsilon,(\left.k^{-1}\right|_{\delta_1})^{-1})][(\sigma k^{-1}\cdot\delta_1,1_G)]^*B_M\\
	&=[(\alpha,g\left.k^{-1}\right|_{\delta_1}^{-1})][(\sigma(k^{-1}\cdot\delta_1), 1_G)]^*B_M.
	\end{align*}
	In the above we have used the easy-to-check fact that  $[(\epsilon, k)]^* = [(\epsilon, k^{-1})]$. One also uses this to show
	\begin{align*}
	([(\alpha,g)][(\gamma, 1_G)]^*)^* &= [(\gamma, 1_G)][(\alpha,g)]^* = [(\gamma, 1_G)][(\epsilon, g)]^*[(\alpha,1_G)]^* \\&= [(\gamma, g^{-1})][(\alpha,1_G)]^*.
	\end{align*}
	Comparing this and \eqref{eq:SPssgproduct} with \eqref{eq:sgxproduct} and the inverse above it shows that $\vp$ is an inverse semigroup homomorphism.	
\end{proof}
\begin{theo}\label{thm:SSGboundaryiso}
	Let $(G,X)$ be a pseudo-free and recurrent self-similar action, let $P = X^*\bowtie G$, let $\s_{P}$ be as in \eqref{eq:SP}, and let $\sgx$ be as in \eqref{eq:sgxdef}. Then the map $\Phi: \gt(\s_P)\to \gt(\sgx)$ defined by
	\begin{equation*}
	\Phi[s,x] = [\vp(s), x],\hspace{1cm} s\in \s_P, x\in D_{s^*s},
	\end{equation*}
	is an isomorphism of topological groupoids. In particular,
	\[
	\qpp \cong \ogx.
	\]
\end{theo}
\begin{proof}
	If $[s,x] = [t,x]$ then there exists a prefix $\alpha$ of $x$ such that $sA_\alpha = tA_\alpha$ and so $\vp(s)\vp(A_\alpha) = \vp(t)\vp(A_\alpha)$. Since $\vp(A_\alpha) = (\alpha,1_G, \alpha)$ is an idempotent and $x$ is in its domain, $[\vp(s),x] = [\vp(t), x]$ so that $\Phi$ is well-defined.
	
	To prove that $\Phi$ is injective, suppose that we have $\alpha, \gamma, \delta, \sigma\in X^*$, $x\in X^\NN$ and $g, k\in G$ such that $\Phi[[(\alpha, g)][(\gamma, 1_G)]^*B_n, x] =  \Phi[[(\delta, k)][(\sigma, 1_G)]^*B_m, x]$, i.e. $[(\alpha, g, \gamma), x] = [(\delta, k, \sigma), x]$. Then there exists a prefix $\mu$ of $x$ with $\mu = \gamma\gamma_1= \sigma\sigma_1$ and $(\alpha, g, \gamma)(\mu, 1_G, \mu) = (\delta, k, \sigma)(\mu, 1_G, \mu)$. Multiplying this out gives $(\alpha(g\cdot\gamma_1), \left.g\right|_{\gamma_1}, \mu) = (\delta(k\cdot\sigma_1), \left.k\right|_{\sigma_1}, \mu)$. But then the same calculation shows that $$[(\alpha, g)][(\gamma, 1_G)]^*B_nA_\mu B_{\max\{m,n\}} = [(\delta, k)][(\sigma, 1_G)]^*B_mA_\mu B_{\max\{m,n\}},$$ and since $\mu$ is a prefix of $x$ we have $$[[(\alpha, g)][(\gamma, 1_G)]^*B_n, x] = [[(\delta, k)][(\sigma, 1_G)]^*B_m, x]$$ so that $\Phi$ is injective. Surjectivity is clear, and that $\Phi$ is a groupoid homomorphism follows directly from Lemma~\ref{lem:semigrouphomomorphism}. Sets of the form $\Omega(s, C(\alpha))$ generate the topology on $\gt(\s_P)$ and it is straightforward to see that $\Phi$ maps $\Omega(s, C(\alpha))$ bijectively onto $\Omega(\vp(s), C(\alpha))$. Since sets of this type generate the topology on $\gt(\sgx)$, $\Phi$ is a homeomorphism.
	
	Since $\qpp$ is by definition equal to $C^*(\gt(\s_P))$ and we have that $\ogx$ is isomorphic to $C^*(\gt(\sgx))$ by \cite[Corollary~6.4]{EP17}, we have the second statement.
\end{proof}

\subsection*{Acknowledgement}
We thank the referee for thoroughly reading this paper and for the helpful suggestions and comments.







{\small 
	\textsc{Carleton University, School of Mathematics and Statistics. 4302 Herzberg Laboratories} \texttt{cstar@math.carleton.ca} 
}

\begin{thebibliography}{10}
	
	\bibitem{ABLS19}
	{\sc Afsar, Z., Brownlowe, N., Larsen, N.~S., and Stammeier, N.}
	\newblock Equilibrium states on right {LCM} semigroup {$C^*$}-algebras.
	\newblock {\em Int. Math. Res. Not. IMRN}, 6 (2019), 1642--1698.
	
	\bibitem{BOS18}
	{\sc Barlak, S., Omland, T., and Stammeier, N.}
	\newblock On the {$K$}-theory of {$C^{\ast}$}-algebras arising from integral
	dynamics.
	\newblock {\em Ergodic Theory Dynam. Systems 38}, 3 (2018), 832--862.
	
	\bibitem{Bl85}
	{\sc Blackadar, B.}
	\newblock Shape theory for {$C^\ast$}-algebras.
	\newblock {\em Math. Scand. 56}, 2 (1985), 249--275.
	
	\bibitem{BLS16}
	{\sc {Brownlowe}, N., {Larsen}, N.~S., and {Stammeier}, N.}
	\newblock {On C*-algebras associated to right LCM semigroups}.
	\newblock {\em Trans. Amer. Math. Soc. 369\/} (2017), 31--68.
	
	\bibitem{BLS18}
	{\sc Brownlowe, N., Larsen, N.~S., and Stammeier, N.}
	\newblock {$C^*$}-algebras of algebraic dynamical systems and right {LCM}
	semigroups.
	\newblock {\em Indiana Univ. Math. J. 67}, 6 (2018), 2453--2486.
	
	\bibitem{BRRW14}
	{\sc Brownlowe, N., Ramagge, J., Robertson, D., and Whittaker, M.~F.}
	\newblock Zappa--{S}z\'ep products of semigroups and their {C}*-algebras.
	\newblock {\em Journal of Functional Analysis 266}, 6 (2014), 3937 -- 3967.
	
	\bibitem{BS16}
	{\sc Brownlowe, N., and Stammeier, N.}
	\newblock The boundary quotient for algebraic dynamical systems.
	\newblock {\em J. Math. Anal. Appl. 438}, 2 (2016), 772--789.
	
	\bibitem{CaR16}
	{\sc Clark, L.~O., an~Heuf, A., and Raeburn, I.}
	\newblock Phase transitions of the {T}oeplitz algebras of {B}aumslag-{S}olitar
	semigroups.
	\newblock {\em Indiana Univ. Math. J. 65\/} (2016), 2137--2173.
	
	\bibitem{CEL15}
	{\sc Cuntz, J., Echterhoff, S., and Li, X.}
	\newblock On the {K}-theory of the {C}*-algebra generated by the left regular
	representation of an {O}re semigroup.
	\newblock {\em Journal of the European Mathematical Society 017}, 3 (2015),
	645--687.
	
	\bibitem{DW17}
	{\sc Dehornoy, P., and Wehrung, F.}
	\newblock Multifraction reduction {III}: the case of interval monoids.
	\newblock {\em J. Comb. Algebra 1}, 4 (2017), 341--370.
	
	\bibitem{DM14}
	{\sc Donsig, A.~P., and Milan, D.}
	\newblock Joins and covers in inverse semigroups and tight {C}*-algebras.
	\newblock {\em Bulletin of the Australian Mathematical Society 90\/} (8 2014),
	121--133.
	
	\bibitem{Ex08}
	{\sc Exel, R.}
	\newblock Inverse semigroups and combinatorial {$C^\ast$}-algebras.
	\newblock {\em Bull. Braz. Math. Soc. (N.S.) 39}, 2 (2008), 191--313.
	
	\bibitem{Ex09}
	{\sc Exel, R.}
	\newblock Tight representations of semilattices and inverse semigroups.
	\newblock {\em Semigroup Forum 79}, 1 (2009), 159--182.
	
	\bibitem{ExBook}
	{\sc Exel, R.}
	\newblock {\em Partial Dynamical Systems, Fell Bundles and Applications},
	vol.~224 of {\em Mathematical Surveys and Monographs}.
	\newblock American Mathematical Society, Providence, RI, 2017.
	
	\bibitem{Ex19}
	{\sc {Exel}, R.}
	\newblock {Tight and cover-to-join representations of semilattices and inverse
		semigroups}.
	\newblock {\em arXiv:1903.02911\/} (2019).
	
	\bibitem{EP17}
	{\sc Exel, R., and Pardo, E.}
	\newblock Self-similar graphs, a unified treatment of {K}atsura and
	{N}ekrashevych {C}*-algebras.
	\newblock {\em Advances in Mathematics 306\/} (2017), 1046 -- 1129.
	
	\bibitem{ES16}
	{\sc Exel, R., and Starling, C.}
	\newblock Self-similar graph {$C^*$}-algebras and partial crossed products.
	\newblock {\em J. Operator Theory 75}, 2 (2016), 299--317.
	
	\bibitem{HR90}
	{\sc Hancock, R., and Raeburn, I.}
	\newblock The {$C^*$}-algebras of some inverse semigroups.
	\newblock {\em Bull. Austral. Math. Soc. 42}, 2 (1990), 335--348.
	
	\bibitem{LL20}
	{\sc Laca, M., and Li, B.}
	\newblock Amenability and functoriality of right-{LCM} semigroup {C}*-algebras.
	\newblock {\em Proc. Amer. Math. Soc. 148}, 12 (2020), 5209--5224.
	
	\bibitem{LL21}
	{\sc Laca, M., and Li, B.}
	\newblock Dilation theory for right {LCM} semigroup dynamical systems.
	\newblock {\em arXiv:2102.08439 [math.OA]\/} (2021).
	
	\bibitem{LRRW14}
	{\sc Laca, M., Raeburn, I., Ramagge, J., and Whittaker, M.~F.}
	\newblock Equilibrium states on the cuntz–pimsner algebras of self-similar
	actions.
	\newblock {\em J. Funct. Anal. 266}, 11 (2014), 6619 -- 6661.
	
	\bibitem{LW15}
	{\sc Lawson, M.~V., and Wallis, A.~R.}
	\newblock A correspondence between a class of monoids and self-similar group
	actions {II}.
	\newblock {\em Internat. J. Algebra Comput. 25}, 4 (2015), 633--668.
	
	\bibitem{LiB19}
	{\sc Li, B.}
	\newblock Regular dilation and {N}ica-covariant representation on right {LCM}
	semigroups.
	\newblock {\em Integral Equations Operator Theory 91}, 4 (2019), Paper No. 36,
	35.
	
	\bibitem{Li12}
	{\sc Li, X.}
	\newblock Semigroup {C}*-algebras and amenability of semigroups.
	\newblock {\em Journal of Functional Analysis 262\/} (2012), 4302 -- 4340.
	
	\bibitem{Li13}
	{\sc Li, X.}
	\newblock Nuclearity of semigroup {C}*-algebras and the connection to
	amenability.
	\newblock {\em Advances in Mathematics 244}, 0 (2013), 626 -- 662.
	
	\bibitem{Li17}
	{\sc Li, X.}
	\newblock Partial transformation groupoids attached to graphs and semigroups.
	\newblock {\em Int. Math. Res. Not. IMRN}, 17 (2017), 5233--5259.
	
	\bibitem{MS14}
	{\sc {Milan}, D., and {Steinberg}, B.}
	\newblock {On inverse semigroup {C}*-algebras and crossed products}.
	\newblock {\em Groups, Geometry, and Dynamics 8}, 2 (2014), 485--512.
	
	\bibitem{Nek04}
	{\sc Nekrashevych, V.}
	\newblock Cuntz-{P}imsner algebras of group actions.
	\newblock {\em J. Operator Theory 52\/} (2004), 223--249.
	
	\bibitem{Nek05}
	{\sc Nekrashevych, V.}
	\newblock {\em Self-similar groups}, vol.~117 of {\em Mathematical Surveys and
		Monographs}.
	\newblock American Mathematical Society, Providence, RI, 2005.
	
	\bibitem{Nek09}
	{\sc Nekrashevych, V.}
	\newblock C*-algebras and self-similar groups.
	\newblock {\em J. reine angew. Math 630\/} (2009), 59--123.
	
	\bibitem{NS19}
	{\sc Neshveyev, S., and Stammeier, N.}
	\newblock The groupoid approach to equilibrium states on right {LCM} semigroup
	{C}*-algebras.
	\newblock {\em arXiv:1912.03141 [math.OA]\/} (2019).
	
	\bibitem{Ni92}
	{\sc Nica, A.}
	\newblock C*-algebras generated by isometries and {W}iener-{H}opf operators.
	\newblock {\em J. Operator Theory 27\/} (1992), 17--52.
	
	\bibitem{No14}
	{\sc Norling, M.~D.}
	\newblock Inverse semigroup {C}*-algebras associated with left cancellative
	semigroups.
	\newblock {\em Proceedings of the Edinburgh Mathematical Society (Series 2)
		57\/} (6 2014), 533--564.
	
	\bibitem{Pa99}
	{\sc Paterson, A.}
	\newblock {\em Groupoids, inverse semigroups, and their operator algebras}.
	\newblock Birkh\"{a}user, 1999.
	
	\bibitem{R80}
	{\sc Renault, J.}
	\newblock {\em A groupoid approach to {$C^{\ast} $}-algebras}, vol.~793 of {\em
		Lecture Notes in Mathematics}.
	\newblock Springer, Berlin, 1980.
	
	\bibitem{Sim20}
	{\sc Sims, A.}
	\newblock Hausdorff \'etale groupoids and their {C}*-algebras.
	\newblock In {\em Operator algebras and dynamics: groupoids, crossed products
		and {R}okhlin dimension}, F.~Perera, Ed. Birkh{\"a}user, 2020, ch.~7--11,
	pp.~63--120.
	
	\bibitem{Stam15}
	{\sc Stammeier, N.}
	\newblock On {${\rm C}^*$}-algebras of irreversible algebraic dynamical
	systems.
	\newblock {\em J. Funct. Anal. 269}, 4 (2015), 1136--1179.
	
	\bibitem{Stam17}
	{\sc Stammeier, N.}
	\newblock A boundary quotient diagram for right {LCM} semigroups.
	\newblock {\em Semigroup Forum 95}, 3 (2017), 539--554.
	
	\bibitem{StLCM}
	{\sc Starling, C.}
	\newblock Boundary quotients of {C}*-algebras of right {LCM} semigroups.
	\newblock {\em J. Funct. Anal. 268}, 11 (2015), 3326 -- 3356.
	
	\bibitem{Tol17}
	{\sc Tolich, I.}
	\newblock {\em C*-algebras generated by semigroups of partial isometries}.
	\newblock PhD thesis, University of Otago, Otago, New Zealand, 2017.
	
\end{thebibliography}
\end{document}